\newtheorem{theorem}{Theorem}[section]
\newtheorem{definition}[theorem]{Definition}
\newtheorem{notation}[theorem]{Notation}
\newtheorem{proposition}[theorem]{Proposition}
\newtheorem{corollary}[theorem]{Corollary}
\newtheorem{lemma}[theorem]{Lemma}
\newtheorem{remark}[theorem]{Remark}
\newtheorem{example}[theorem]{Example}
\newcommand{\bdfn}{\begin{definition}}
\newcommand{\edfn}{\end{definition}}
\newcommand{\bthm}{\begin{theorem}}
\newcommand{\ethm}{\end{theorem}}
\newcommand{\bprop}{\begin{proposition}}
\newcommand{\eprop}{\end{proposition}}
\newcommand{\bcor}{\begin{corollary}}
\newcommand{\ecor}{\end{corollary}}
\newcommand{\blem}{\begin{lemma}}
\newcommand{\elem}{\end{lemma}}
\newcommand{\bex}{\begin{example}\begin{rm}}
\newcommand{\eex}{\end{rm}\end{example}}
\newcommand{\norm}[1]{\left\| {#1}\right\|}
\newcommand{\N}{{\mathbb N}}
\newcommand{\R}{{\mathbb R}}
\newcommand{\CAT}{{\rm{CAT}(0)}}
\newcommand{\ba}{\begin{array}} \newcommand{\ea}{\end{array}}
\newcommand{\pair}[1]{\langle {#1} \rangle}
\newcommand{\eps}{\varepsilon}
\title{Quantitative translations for viscosity approximation methods in hyperbolic spaces}
\author{Ulrich Kohlenbach${}^{\dagger}$ and Pedro Pinto${}^{\ddagger}$\\[2mm]
	\footnotesize Department of Mathematics, Technische Universit{\"a}t Darmstadt,\\ 
	\footnotesize Schlossgartenstra\ss{}e 7, 64289 Darmstadt, Germany \\
	\footnotesize ${}^{\dagger}$\protect\url{kohlenbach@mathematik.tu-darmstadt.de}\\
	\footnotesize ${}^{\ddagger}$\protect\url{pinto@mathematik.tu-darmstadt.de}
}
\begin{document}

\maketitle

\begin{abstract}
In the setting of hyperbolic spaces, we show that the convergence of Browder-type sequences and
Halpern iterations respectively entail the convergence of their viscosity version with a Rakotch map. We also show that the convergence of a hybrid viscosity version of the Krasnoselskii-Mann iteration follows from the convergence of the Browder type sequence. Our results follow from proof-theoretic techniques (proof mining). From an analysis of theorems due to T. Suzuki, we extract a transformation of rates for the original Browder type and Halpern iterations into rates for the corresponding viscosity versions. We show that these transformations can be applied to earlier quantitative studies of these iterations. From an analysis of a theorem due to H.-K. Xu, N. Altwaijry and S. Chebbi, we obtain similar results. Finally, in uniformly convex Banach spaces we study a strong notion of accretive operator due to Brezis and Sibony and extract an uniform modulus of uniqueness for the property of being a zero point. In this context, we show that it is possible to obtain Cauchy rates for the Browder type and the Halpern iterations (and hence also for their viscosity versions).

Keywords: Viscosity method, Rates of convergence, Rates of metastability, Proof mining. MSC2020: 47H09 47J25 03F10 53C23
\end{abstract}

\section{Introduction}

In this paper we study strongly convergent iteration schemes for nonexpansive 
self mappings $T:C\to C$ of bounded closed convex subsets of Banach spaces and 
 - more generally - complete 
$(W)$-hyperbolic spaces $X$ in the sense of 
\cite{Kohlenbach(05)i}. 

In the framework of Hilbert spaces, Browder \cite{Browder(67)} and Halpern \cite{Halpern(67)} proved strong convergence theorems for implicit and explicit iterations, respectively. Browder's implicit scheme is  
\begin{equation}
y_n(u)=(1-\alpha_n)T(y_n(u))\oplus\alpha_nu, \ \ u\in C,
\end{equation}
while Halpern introduced the explicit iteration scheme
\begin{equation}
w_{n+1}(u)=(1-\alpha_n)T(w_n(u))\oplus\alpha_nu, \ \ w_0(u)=u\in C.
\end{equation}
Here `$\oplus$' is the result of applying the convexity operator $W$ from 
our hyperbolic space (see the next section for details) which is the usual
linear convex combination in the case of normed spaces.
\\[1mm]
The implicit 
schema $(y_n(u))$ was introduced by Browder in \cite{Browder(67)} and shown to be strongly convergent in Hilbert spaces  
to the metric projection of $u$ onto the set $Fix(T)$ of fixed points of $T$ 
(see also \cite{Halpern(67)} for a more elementary proof). 
In \cite{Reich(80)}, Reich extended the convergence of the Browder sequence to uniformly smooth Banach spaces 
where the sequence converges to the unique sunny nonexpansive retraction of 
$C$ onto $Fix(T)$ applied to $u.$
\\[1mm]
For the explicit scheme $(w_n(u))$, Halpern showed the strong convergence to the metric projection of the anchor point $u$ onto the set $Fix(T).$ 
The conditions considered by Halpern prevented the natural choice $\alpha_n=\frac{1}{n+1}$, which was later overcome by Wittmann~\cite{Wittmann(92)}. \cite{ShiojiTakahashi(97)} generalized 
Wittmann's theorem in particular to uniformly smooth Banach spaces. 
In \cite{Xu(02)i,Xu(02)ii}, Xu proved the strong convergence of Halpern type iterations in uniformly smooth Banach spaces under conditions which are 
incomparable to those used by Wittmann but which also allow for 
the choice $\alpha_n=\frac{1}{n+1}.$ 
\\[1mm]
Both schemes have also been considered for families of nonexpansive 
mappings $(S_n)$ instead of the single map $T:$ the sequence implicitly 
defined by 
\begin{equation}\label{SnB}
y_n(u)=(1-\alpha_n)S_n(y_n(u))\oplus\alpha_nu,
\end{equation}
is called the $(S_n)$-Browder sequence with anchor point $u$. The original Browder sequence is \eqref{SnB} for a constant sequence $(S_n)$.
\\[1mm] Likewise, the sequence explicitly defined by 
\begin{equation}\label{SnH}
w_0(u)=u\in C\, \text{ and } w_{n+1}(u)=(1-\alpha_n)S_n(w_n(u))\oplus\alpha_nu,
\end{equation}
is called the $(S_n)$-Halpern iteration (with anchor and starting point $u$). The Halpern iteration~\cite{Halpern(67)} is the particular case of \eqref{SnH} for constant $(S_n)$. 
In \cite{Bauschke(96)}, Bauschke considered a finite sequence of nonexpansive maps (not necessarily commutative but under a condition on the fixed point sets for ordered compositions) to define in a cyclic manner an infinite sequence of nonexpansive maps $(S_n)$ from an initial finite list of maps. In this way, Bauschke's result is a particular instance of convergence for an iteration in the style of \eqref{SnH}. Bauschke's result was further generalized in 
\cite{Jungetal}. 
\\[1mm]
A different instance of convergence for iterations \eqref{SnH} can be found in the methods for finding zeros of accretive operators, the so-called proximal point algorithms. The Halpern-type proximal point algorithm (HPPA), introduced by Kamimura and Takahashi~\cite{KamimuraTakahashi(00)} and independently by Xu~\cite{Xu(02)i}, is the particular case of \eqref{SnH} where the sequence of nonexpansive maps is given by resolvent functions of an accretive operator. The HPPA has been extensively studied in the literature and several results give conditions that guarantee the strong convergence of the algorithm 
both in Hilbert spaces as well as e.g. in Banach spaces which are both 
uniformly smooth and uniformly convex 
(e.g. \cite{KamimuraTakahashi(00),Xu(02)i,BoikanyoMorosanu(11)ii,KhatibzadehRanjbar(13),AoyamaToyoda(17)}).
\\[1mm]
In \cite{Moudafi(00)}, Moudafi introduced in the context of Hilbert spaces the so-called viscosity algorithms in which the 
fixed anchor $u$ is replaced 
by the value of a strict contraction $\phi$ applied 
to the current iteration. 
Moudafi's viscosity algorithms were extended to uniformly smooth Banach spaces by Xu in \cite{Xu(04)}.
While Moudafi only considered a single 
mapping $T$ this has also subsequently been generalized to families of such 
mappings $(S_n):$ the sequence implicitly defined by 
\begin{equation}\label{VSnB}
x_n=(1-\alpha_n)S_n(x_n)\oplus\alpha_n\phi(x_n),
\end{equation}
is called the $(S_n)$-viscosity-Browder sequence (for $(\alpha_n)$).
\\[1mm]
The $(S_n)$-viscosity-Halpern iteration is an explicit counterpart of \eqref{VSnB}, namely, given $x_0\in C$, the iteration is defined inductively by
\begin{equation}\label{VSnH}
x_{n+1}=(1-\alpha_n)S_n(x_n)\oplus\alpha_n\phi(x_n).
\end{equation}
Viscosity generalizations of Bauschke's result for finite families 
of mappings and of the results in \cite{Jungetal} are given in 
\cite{Jung(06)} and - under very general conditions - in 
\cite{Chang(06)} which is closely related to the prior 
Hybrid Steepest Descent Method, introduced by Yamada in \cite{Yamada(01)} 
for finite sets of nonexpansive functions. In fact, the proof 
in \cite{Yamada(01)} can be adapted to provide a strong convergence result 
for the viscosity version of Bauschke's theorem (see \cite{Koernlein(diss)}). 
\\[1mm]
In the important paper \cite{Suzuki(07)}, Suzuki showed in the context 
of normed spaces that under very general conditions the convergence of 
the Browder and Halpern schemes (for families of mappings) implies the 
convergence of the corresponding viscosity schemes. Moreover, he showed 
that the limit of $(x_n)$ is the unique fixed point of $P\circ \phi,$ 
where $P(u):=\lim y_n(u)$ in the Browder case and 
$P(u):=\lim w_n(u)$ in the Halpern case respectively. Suzuki's result 
actually applies to a larger class of mappings $\phi$  than only strict 
contractions, namely to so-called Meir-Keeler contractions (MKC) introduced 
in \cite{MeirKeeler(69)} which in the case of convex sets 
(even of hyperbolic spaces) coincide with the uniformly contractive mappings 
introduced earlier by Rakotch in \cite{Rakotch(62)}. With $\phi,$ also 
the map $x\mapsto (1-\alpha_n)S_n(x)\oplus \alpha_n\phi(x)$ is an MKC 
mapping and so -- by \cite{MeirKeeler(69),Rakotch(62)} -- has a unique 
fixed point. Thus the implicit Browder scheme \eqref{VSnB} (even for $\phi$ a MKC) is well-defined.
\\[1mm]
In section 3 of this paper, we give a complete quantitative analysis of 
Suzuki's reduction technique both in terms of rates of convergence as well as 
in terms of rates of metastability in the sense of Tao. Since in many important 
cases, explicit rates of metastability have been construced in recent years 
for both Browder-type sequences as well as for Halpern iterations, we now 
get in all these situations also rates of metastability for the corresponding 
viscosity generalizations. Usually, effective rates of convergence can be ruled 
out for Browder and Halpern sequences (see e.g. \cite{Neu15}) 
and so it is important that our 
quantitative analysis of Suzuki's theorem allows us to operate on the 
level of rates of metastability, by which we mean for a sequence $(x_n)$ 
in a metric space $(X,d)$ any bound $\varphi:(0,\infty)\times \N^{\N}\to 
\N$ such that 
\[ \forall \varepsilon>0\,\forall f:\N\to\N\,\exists n\le\varphi(\varepsilon,f) 
\,\forall i,j\in [n,f(n)]\, \left( d(x_i,x_j)\le\varepsilon\right) \]
(see \cite{Kohlenbach(computational),Tao(08)}). Note that noneffectively, 
the property bounded by $\varphi$ is equivalent to the usual Cauchy property.
So a rate of metastability provides a finitary quantitative statement which 
(noneffectively) implies back the convergence statement. Moreover, we 
augment the rate of metastability with further bounding information which 
entails that the limits of $(x_n)$ is the unique fixed point of $P\circ 
\phi$ with $P$ as above (see Theorems \ref{SBtheo} and \ref{SHtheo}).
\\[1mm] Our analysis uses ideas from the logic-based approach of `proof 
mining' by which the extractability of such quantitative data follows 
from certain logical transformations of the given proof 
(see \cite{Koh2008} for more on this). However, while ideas from 
logic played a key role in arriving at our results, the final proofs 
make no references to logic. As a common 
by-product of such a logical analysis of proofs one often obtains in 
addition to quantitative information also new qualitative generalizations 
of the original theorems. E.g. the generalization to the setting of hyperbolic spaces is such an offspring of the logic-based 
approach.
\\[1mm]
If one is in a special situation where an actual rate of convergence for the 
Browder of Halpern sequences is available, then our quantitative 
transformation also yields a rate of convergence for the corresponding 
viscosity version.
\\[1mm]
A different method  for finding fixed points of nonexpansive mappings is the Krasnoselskii-Mann iteration
\begin{equation}\label{KM}
x_{n+1}=(1-\beta_n) x_n \oplus\beta_nT(x_n).
\end{equation}

This iteration has nice properties, notably Fej\'er monotonicity w.r.t. 
the fixed point set of $T,$ but in general only approximates a fixed point weakly. This motivated several modifications to \eqref{KM} to ensure strong convergence. Here we consider a hybrid version of the Krasnoselskii-Mann iteration and the viscosity approximation method. Namely, for $(\beta_n)\subset(0,1]$ and $x_0\in C$, the \rm{vKM} iteration is defined by 
\begin{equation}\label{vKM}
x_{n+1}=(1-\beta_n)x_n\oplus \beta_n\left((1-\alpha_n)T(x_n)\oplus \alpha_n \phi( x_n)\right),
\end{equation}
introduced in \cite{Gwinner(78)} and shown there 
to be strongly convergent in Banach spaces under very strong conditions. 
In \cite{Xu(2020)} this is studied further and much improved. 
Note that for $\alpha_n\equiv 0$, \eqref{vKM} reduces to the original schema \eqref{KM}, and for $\beta_n\equiv 1$, \eqref{vKM} is the viscosity generalization of the Halpern iteration.
\\ Again we provide a quantitative analysis of the results 
from \cite{Gwinner(78),Xu(2020)}.
\\[1mm] 
In the final section 4 we study a situation in which explicit rates of 
convergence (rather than only rates of metastability) for 
Browder-type sequences, Halpern iterations as well as  
Krasnoselskii-Mann 
iterations can be obtained. Here the 
nonexpansive mapping $T:C\to C$ used in these sequences is of the form 
$I-A,$ where $A$ is an accretive operator which satisfies a condition 
of being uniformly accretive which was introduced in \cite{BrezisSibony} 
(see also \cite{Gwinner(78)} and \cite{Xu(2020)}) and which is more general 
than other notions of uniform $\Psi$ or $\Phi$-accretivity studied in 
the literature. In uniformly convex Banach spaces $X$, this liberal notion 
suffices to conclude that $A$ has at most one zero. From this uniqueness 
proof due to \cite{BrezisSibony} we extract a so-called modulus of uniqueness 
(see \cite{Koh2008} and the references given there) 
which only depends on a general modulus function witnessing the 
uniform accretivity of $A$ and a modulus of uniform convexity of $X.$
From this we obtain explicit and low complexity rates of convergence for the 
iterations listed above involving the mapping $T=I-A.$

\section{Preliminaries}

\subsection{Classes of maps and hyperbolic spaces}
Let $(X,d)$ be a metric space and $C$ a nonempty subset of $X$.
\begin{definition}
	Consider a mapping $T:C\to C$. We say
	\begin{itemize}
		\item $T$is nonexpansive if for all $x,y\in C$
		\begin{equation*}
		d(T(x),T(y))\leq d(x,y)\, ;
		\end{equation*}
		\item $T$ is a strict contraction if there is $r\in [0,1)$ such that for all $x,y\in C$
		\begin{equation*}
		d(T(x),T(y))\leq rd(x,y),
		\end{equation*}
		in which case we say that $T$ is an $r$-contraction;
		\item  $T$ is a Meir-Keeler contraction (MKC) if for any $\varepsilon >0$ there is $\delta >0$ such that
		\begin{equation*}
		d(x,y)< \varepsilon + \delta \to d(T(x),T(y))< \varepsilon, \text{ for all }\, x, y\in C.
		\end{equation*}
	\end{itemize}
\end{definition}
Clearly, any contraction is a MKC mapping and any MKC mapping is a nonexpansive map. Meir-Keeler contraction were introduced in \cite{MeirKeeler(69)} as a generalization of metric contractions. Actually, \cite{MeirKeeler(69)} considers a condition with the stronger premise $\eps\leq d(x,y)< \eps+\delta$, which ends up being equivalent to the one above using the nonexpansivity of $T$.

We now recall the notion of hyperbolic space. There are several distinct notion of hyperbolic space in the literature \cite{Kirk(82),GoebelKirk(83),GoebelReich(84),ReichShafrir(90)}. Our results will be in the setting of $W$-hyperbolic spaces as introduced by the first author \cite{Kohlenbach(05)i}, here labeled simply by \emph{hyperbolic spaces}. This setting is slightly more restrictive than the notion of hyperbolic space by Goebel/Kirk~\cite{GoebelKirk(83)}, but more general than the hyperbolic spaces in the sense of Reich/Shafrir~\cite{ReichShafrir(90)}.

The triple $(X,d,W)$ is called a hyperbolic space if $(X,d)$ is a metric space and $W: X\times X \times [0,1] \to X$ is a function satisfying
\begin{enumerate}
	\item[(W1)] $\forall x, y\in X \forall \lambda \in [0,1] \, \left( d(z, W(x,y,\lambda))\leq (1-\lambda)d(z,x)+\lambda d(z,y) \right)$,
	\item[(W2)] $\forall x, y\in X \forall \lambda_1, \lambda_2 \in [0,1] \, \left( d(W(x,y,\lambda_1), W(x,y,\lambda_2)=|\lambda_1-\lambda_2|d(x,y)) \right)$,
	\item[(W3)] $\forall x, y\in X \forall \lambda \in [0,1] \, \left( W(x,y,\lambda)=W(y,x, 1-\lambda) \right)$,
	\item[(W4)] $\begin{cases}\forall x, y, z, w\in X \forall \lambda \in [0,1]\\
	\qquad\left( d(W(x,z,\lambda), W(y,w,\lambda))\leq (1-\lambda)d(x,y)+\lambda d(z,w) \right).\end{cases}$
\end{enumerate}

The convexity function $W$ was first considered by Takahashi \cite{Takahashi(70)}, where a triple $(X,d,W)$ with $W$ satisfying (W1) is called a convex metric space. The classe of hyperbolic spaces includes normed spaces and their convex subsets, the Hilbert ball \cite{GoebelReich(84)} and \CAT-spaces in the sense of Gromov. (see e.g. \cite{BridsonHaefliger(13)} for a detailed treatment). In turn, hyperbolic spaces are \CAT-spaces if they satisfy the property CN$^-$ (which in the presence of other axioms is equivalent to the Bruhat-Tits CN-inequality~\cite{BruhatTits(72)} but in contrast 
to the latter purely universal):
\begin{equation*}
	\text{CN}^-:\quad\forall x, y, z\in X\, \left( d^2(z, W(x,y, \frac{1}{2}))\leq \frac{1}{2}d^2(z,x)+\frac{1}{2}d^2(z,y)-\frac{1}{4}d^2(x,y)\right).
\end{equation*}

If $x, y \in X$ and $\lambda \in [0,1]$, we shall denote $W(x,y,\lambda)$ by $(1-\lambda)x \oplus \lambda y$. Using (W1), it is easy to see that
\begin{equation*}
d(x,(1-\lambda)x \oplus \lambda y)=\lambda d(x,y) \, \text{ and } \, d(y,(1-\lambda)x \oplus \lambda y)=(1-\lambda)d(x,y).
\end{equation*}
For $x,y\in X$, the set $\{(1-\lambda)x \oplus \lambda y\, :\, \lambda \in[0,1]\}$ is called the metric segment with endpoints $x,y$ and is denote by $[x,y]$. A nonempty subset $X\subseteq X$ is called \emph{convex} if $[x,y] \subseteq C$, for all $x, y \in C$.

For convex $C$, MKC have the useful property that for each $\varepsilon >0$ there is a contraction factor for points $\varepsilon$-apart. This was 
shown for convex subsets of Banach spaces in \cite[Proposition 2]{Suzuki(07)}. We generalize
this result to the geodesic setting: 
\begin{lemma}\label{MKcontractionR}
	Let $C$ be a convex subset of $X$ and $\phi$ a MKC on $C$. Then there is a function $\delta:(0, \infty)\to (0,1)$ such that for all $\eps>0$ and $x, y\in C$
	\begin{equation*}
		d(x,y)\geq \varepsilon \to d(T(x), T(y))\leq (1-\delta(\eps))\cdot d(x,y).
	\end{equation*}
\end{lemma}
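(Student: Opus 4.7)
The plan is to fix $\varepsilon > 0$, apply the MKC property of $\phi$ at the parameter $\varepsilon/2$ (rather than at $\varepsilon$ itself) to obtain some $\eta \in (0, \varepsilon)$ such that $d(a,b) < \varepsilon/2 + \eta$ implies $d(\phi(a), \phi(b)) < \varepsilon/2$ for all $a, b \in C$, and then to use the convexity of $C$ via the operator $W$ to walk along the metric segment $[x,y]$ in equal steps of size $s := (\varepsilon + \eta)/2$. The choice of $\varepsilon/2$ here is crucial: it forces $s < \varepsilon$, guaranteeing that for every pair $x, y$ with $d(x,y) \geq \varepsilon$ at least one full step of length $s$ fits inside $[x,y]$, which is what will allow a uniform multiplicative gap even when $d(x,y)$ is exactly $\varepsilon$.

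Given such $x, y$, I would set $t := d(x,y)$, $N := \lfloor t/s \rfloor \geq 1$, and $z_k := W(x, y, ks/t)$ for $k = 0, \dots, N$. Axiom (W2) then gives $d(z_k, z_{k+1}) = s$ for $k < N$ and $d(z_N, y) = t - Ns \in [0, s)$. Since $s < \varepsilon/2 + \eta$, the MKC yields $d(\phi(z_k), \phi(z_{k+1})) < \varepsilon/2$ on every full step, while nonexpansiveness of $\phi$ (itself a consequence of the MKC condition) bounds $d(\phi(z_N), \phi(y)) \leq t - Ns$. Summing along $[x,y]$ by the triangle inequality produces the additive gap
\[
d(\phi(x), \phi(y)) \;\leq\; N \cdot (\varepsilon/2) + (t - Ns) \;=\; t - N\eta/2.
\]

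To convert this additive gap into a multiplicative one uniform in $t \geq \varepsilon$, I would combine $t < (N+1)s$ with $N \geq 1$ to obtain
\[
\frac{N\eta}{2t} \;>\; \frac{N}{N+1}\cdot\frac{\eta}{2s} \;\geq\; \frac{1}{2}\cdot\frac{\eta}{\varepsilon+\eta},
\]
and then set $\delta(\varepsilon) := \eta/(2(\varepsilon + \eta)) \in (0,1)$. The main obstacle is this last step: the $t$-dependent bound $1 - N\eta/(2t)$ is tightest exactly when $N = 1$, i.e.\ when $d(x,y)$ lies in $[\varepsilon, \varepsilon + \eta)$; it is for this case that the step size had to be chosen strictly below $\varepsilon$, via the use of the MKC at $\varepsilon/2$, so that $N \geq 1$ is still available at $t = \varepsilon$.
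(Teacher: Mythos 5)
Your proposal is correct and follows essentially the same route as the paper's proof: subdivide the metric segment $[x,y]$ via $W$ into equal steps whose length slightly exceeds the threshold at which the MKC condition (applied at a scaled-down parameter, $\eps/2$ for you, $\eps/4$ in the paper) kicks in, gain a fixed additive saving on each full step, handle the remainder by nonexpansiveness, and convert the total saving into a uniform multiplicative factor. The only differences are the choice of constants and the resulting modulus ($\delta(\eps)=\eta/(2(\eps+\eta))$ versus the paper's $\sigma/(4\eps)$), which is immaterial.
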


\begin{proof}
	Let $\eps>0$ be given. Since $\phi$ is a MKC mapping, there is $\sigma\in(0,\eps)$ satisfying for all $x,y \in C$
	\[d(x,y)< \frac{\eps}{4}+\sigma \to d(\phi(x), \phi(y))\leq \frac{\eps}{4}.\]
	
	We claim that for all $x, y\in C$,
	\begin{equation}\label{multfactor}
		d(x,y)\geq \eps \to d(\phi(x),\phi(y))\leq \left(1-\frac{\sigma}{4\eps}\right)\cdot d(x,y),
	\end{equation}
and thus for each $\eps>0$ we can define $\delta(\eps)$ by
\[\frac{1}{4\eps}\sup\left\{\sigma\in(0,\eps): \forall x, y\in C\,\left(d(x,y)\geq \eps \to d(\phi(x),\phi(y))\leq (1-\frac{\sigma}{4\eps})\cdot d(x,y)\right) \right\}.\]

	It remains to prove \eqref{multfactor}. Take $x, y \in C$ such that $d(x,y)\geq \eps$ and write $a:=d(x,y)$. For each $j\in \{0, \cdots, \lfloor\frac{a}{\eps}\rfloor+1\}$, consider $\lambda_j\in[0,1]$ defined by $\left(\frac{\eps+\sigma}{4a}\right) j$. For $j\in\{0, \cdots \lfloor\frac{a}{\eps}\rfloor\}$, by (W2)
	\[
	d(W(x,y,\lambda_j), W(x,y,\lambda_{j+1}))=(\lambda_{j+1}-\lambda_j)d(x,y)< \frac{\eps}{4}+\sigma,
	\]
	and hence, by the hypothesis on $\sigma$,
	\[
	d(\phi(W(x,y,\lambda_j)), \phi(W(x,y,\lambda_{j+1})))\leq \frac{\eps}{4}.
	\]
	
	Since $\phi$ is nonexpansive, we have
	\begin{align*}
		d(\phi(x),\phi(y))&\leq \sum_{j=0}^{\lfloor\frac{a}{\eps}\rfloor}d(\phi(W(x,y,\lambda_j)), \phi(W(x,y,\lambda_{j+1})))\\
		&\qquad\qquad\qquad\qquad + d(\phi(W(x,y,\lambda_{\lfloor\frac{a}{\eps}\rfloor+1})), \phi(y))\\
		&\leq \frac{\eps}{4}\left(\left\lfloor\frac{a}{\eps}\right\rfloor+1\right)+(1-\lambda_{\lfloor\frac{a}{\eps}\rfloor+1})d(x,y)\\
		&=a-\frac{\sigma}{4}\left( \left\lfloor\frac{a}{\eps}\right\rfloor+1 \right)\leq a\left(1-\frac{\sigma}{4\eps}\right)=\left(1-\frac{\sigma}{4\eps}\right)d(x,y),
	\end{align*}
showing \eqref{multfactor} and concluding the proof.
\end{proof}

In \cite{Rakotch(62)}, Rakotch generalized Banach's contraction principle to maps satisfying the condition
\begin{equation}\label{rakotchoriginal}
	x\not= y\to d(\phi(x), \phi(y))\leq \alpha(d(x,y))\cdot d(x,y),
\end{equation}
where $\alpha:(0, \infty)\to(0,1)$ is a decreasing function. However, the assumption that $\alpha$ is decreasing implies that condition \eqref{rakotchoriginal} is equivalent to
\begin{equation}\label{rakotchmodified}
	d(x,y)\geq \eps \to d(\phi(x), \phi(y))\leq \alpha(\eps)\cdot d(x,y).
\end{equation}
We may always assume that $\alpha$ in \eqref{rakotchmodified} is decreasing, switching if necessary to
\[\alpha'(\eps):=\max\{ 1/2, \inf\{ \alpha(\delta) : \delta\in(0,\eps]\}\}.\]
This argument shows that \cite[Proposition 2]{Suzuki(07)} actually entails that in normed spaces MKC restricted to convex sets are Rakotch maps. 
The converse is also true: take $\delta:=(1-\alpha(\eps))\cdot\eps$ 
to satisfy the MKC-condition. \\ 
Our Lemma~\ref{MKcontractionR} generalizes this result to the setting of hyperbolic spaces. In our results, $C$ is always convex and so MKC are only seemingly more general than Rakotch maps. In what follows, we call a function $\delta:(0,\infty)\to (0,1)$ a Rakotch-modulus (see \cite{KohlenbachOliva(03)}) if it satisfies
\begin{equation*}
	\forall \varepsilon >0\, \forall x,y \in C\, \left( d(x,y)\geq \varepsilon \to d(\phi(x), \phi(y))\leq (1-\delta(\varepsilon))d(x,y)\right)
\end{equation*}

The particular case where $\phi$ is an $r$-contraction coincides with the case where there is a constant Rakotch-modulus (i.e. for all $\varepsilon\in(0,\infty$,  $\delta(\varepsilon)=\delta$, where $\delta\in (0,1)$ is such that $r\leq 1-\delta$).\\ {\bf Notational convention:} Throuhgout this paper 
$\lceil x\rceil$ is defined as $\max\{ 0, \lceil x\rceil\}$ with the usual 
definition of $\lceil\cdot\rceil.$

\subsection{Quantitative notions}
Let $(x_n)$ be a Cauchy sequence in a metric space $(X,d)$.
\begin{definition}
A function $\rho:(0,\infty)\to\N$ is a Cauchy rate of $(x_n)$ if 
\[ \forall\varepsilon >0\,\forall i,j\ge \rho(\varepsilon)\,\left( 
d(x_i,x_j)\le \varepsilon\right). \]
\end{definition} 
If $(X,d)$ is complete so that $(x_n)$ converges, then clearly $\rho$ is 
a rate of convergence of $(x_n)$ towards its limit $x,$ i.e. 
\[ \forall \varepsilon>0\,\forall n\ge \rho(\varepsilon)\,\left( d(x_n,x)\le \varepsilon\right).\]
\begin{definition}[\cite{Kohlenbach(computational),Tao(08)}] 
A function $\varphi:(0,\infty)\times\N^{\N}\to \N$ is a rate of metastability 
for $(x_n)$ if 
\[ \forall \varepsilon>0\,\forall f:\N\to\N\,\exists n\le\varphi(\varepsilon,f) 
\,\forall i,j\in [n,f(n)]\, \left( d(x_i,x_j)\le\varepsilon\right). \]
\end{definition}
\begin{remark}
In the official definition of `rate of stability', one states that
$\forall i,j\in [n,n+f(n)]\, \left( d(x_i,x_j)\le\varepsilon\right)$
which clearly implies by the formulation above and which in turn is 
implied by the latter taking $\tilde{f}(n):=n+f(n)$ instead of $f.$
\end{remark}
\begin{proposition}\label{prop-rate}
$\rho:(0,\infty)\to (0,\infty)$ is a Cauchy rate iff 
$\varphi(\varepsilon,f):=\rho(\varepsilon)$ is a rate 
of metastability.
\end{proposition}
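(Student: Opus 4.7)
The plan is to verify both directions directly from the definitions, as the content of the proposition is essentially that no information is lost when one restricts to metastability rates that ignore the function parameter $f$.

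For the forward implication, assume $\rho$ is a Cauchy rate. Given any $\varepsilon>0$ and any $f:\N\to\N$, I would take $n:=\rho(\varepsilon)$, which trivially satisfies $n\le \varphi(\varepsilon,f)=\rho(\varepsilon)$. For any $i,j\in[n,f(n)]$, we have $i,j\ge \rho(\varepsilon)$, so the Cauchy rate property yields $d(x_i,x_j)\le\varepsilon$, establishing the metastability property for $\varphi$.

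For the reverse implication, assume $\varphi(\varepsilon,f):=\rho(\varepsilon)$ is a rate of metastability. Fix $\varepsilon>0$ and $i,j\ge \rho(\varepsilon)$; the goal is $d(x_i,x_j)\le\varepsilon$. Here the key move is to specialize $f$: I would define the constant function $f\equiv M$ where $M:=\max\{i,j\}$. Applying the metastability property to this particular $f$ produces some $n\le\rho(\varepsilon)$ with $d(x_a,x_b)\le\varepsilon$ for all $a,b\in[n,M]$. Since $i,j\ge\rho(\varepsilon)\ge n$ and $i,j\le M$, both indices lie in $[n,M]$, giving $d(x_i,x_j)\le\varepsilon$.

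There is no genuine obstacle: the argument is essentially definitional, since a rate of metastability that does not depend on $f$ must in particular bound the worst case, which is achieved by constant $f$ with arbitrarily large value. The only mild point of care is to note that the metastability definition allows any $f:\N\to\N$, so the constant choice is admissible, and that the existential $n\le\rho(\varepsilon)$ combined with $i,j\ge\rho(\varepsilon)$ automatically places $i,j$ above $n$.
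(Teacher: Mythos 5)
Your proof is correct and follows essentially the same route as the paper: the forward direction is immediate, and for the converse you specialize the metastability statement to a constant function whose value dominates both indices, exactly as the paper does with $f(k):=\tilde m$ for $\tilde m\ge m\ge\rho(\varepsilon)$. No issues.
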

\begin{proof} 
If $\rho$ is a Cauchy rate, then clearly $\varphi(\varepsilon,f):=\rho(\varepsilon)$
is a rate of metastability. Let $\varphi(\varepsilon,f)$ be a rate of metastability which is 
a function $\rho(\varepsilon)$ which does not depend on $f.$ Let $\tilde{m}\ge 
m\ge \rho(\varepsilon).$ 
Take $f(k):=\tilde{m}$ for all $k.$ Then there exists an $n\le \rho(\varepsilon)$ such that 
\[ \forall i,j\in [n,f(n)]\subseteq [\rho(\varepsilon),\tilde{m}]\, (d(x_i,x_j)\le \varepsilon) \]
and so, in particular, $d(x_{\tilde{m}},x_m)\le \varepsilon.$ 
\end{proof}
\subsection{Versions of a Lemma by Xu}

A well-known lemma by Xu (e.g. \cite[Lemma 2.1]{Xu(02)ii}) is used in the original proofs.
\begin{lemma}\label{LemmaXu}
	Consider sequences of nonnegative real numbers $(\lambda_n)\subseteq (0,1)$, $(a_n)$ and a sequence $(b_n)\subset\R$ such that $a_{n+1}\leq (1-\lambda_n)a_n+\lambda_nb_n$, for all $n\in \N$. If 
\[ (i) \ \sum^{\infty}_{n=0}\lambda_n=\infty\,\mbox{(or equivalently $\prod^{\infty}_{n=0} (1-\lambda_n)=0)$}, \ 
		(ii) \ \limsup b_n\leq 0,\]
	then $\lim a_n=0$.
\end{lemma}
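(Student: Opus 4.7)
The plan is to prove the standard ``eventual dominance'' statement: for every $\eps>0$, there exists $N$ with $a_n\le \eps$ for all $n\ge N$. First, using $\limsup b_n\le 0$, I would pick $N_1$ so that $b_n\le \eps/2$ for all $n\ge N_1$. Plugging into the recursion and rearranging yields
\[a_{n+1}-\eps/2\le(1-\lambda_n)(a_n-\eps/2)\quad\text{for all }n\ge N_1.\]

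Iterating this inequality from $N_1$ up to $m-1$, I obtain
\[a_m-\eps/2\le\prod_{k=N_1}^{m-1}(1-\lambda_k)\cdot(a_{N_1}-\eps/2).\]
Because $a_m\ge 0$, this cleanly yields the uniform bound
\[a_m\le \eps/2+\prod_{k=N_1}^{m-1}(1-\lambda_k)\cdot a_{N_1},\]
valid both when $a_{N_1}\ge \eps/2$ (just drop the $-\eps/2$ on the right) and when $a_{N_1}<\eps/2$ (the right-hand side of the iterated inequality is already nonpositive, so $a_m\le \eps/2$ directly).

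Finally, I would invoke the standard equivalence $\sum_n\lambda_n=\infty\iff \prod_n(1-\lambda_n)=0$ (which follows from $\log(1-\lambda_n)\le -\lambda_n$) and hence $\prod_{k=N_1}^{m-1}(1-\lambda_k)\to 0$ as $m\to\infty$. Choosing $N\ge N_1$ so that this tail product times $a_{N_1}$ is at most $\eps/2$ for all $m\ge N$ yields $a_m\le \eps$, completing the argument. The whole proof is essentially a telescoping manipulation; the only minor obstacle is bookkeeping the sign of $a_{N_1}-\eps/2$, which is sidestepped once the bound is restated in terms of $a_{N_1}$ alone using nonnegativity of $a_n$.
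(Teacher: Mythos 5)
Your argument is correct: the reduction to $a_{n+1}-\varepsilon/2\le(1-\lambda_n)(a_n-\varepsilon/2)$ for $n$ past the point where $b_n\le\varepsilon/2$, followed by telescoping and $\prod_{k\ge N_1}(1-\lambda_k)\to 0$ (via $1-\lambda\le e^{-\lambda}$ and divergence of the tail sum), is exactly the standard proof of this lemma, and your sign bookkeeping using $a_n\ge 0$ is handled properly. Note that the paper itself does not reprove this statement (it cites it as a well-known lemma of Xu); your telescoping estimate is precisely the finitary content that the paper's quantitative versions, Lemmas~\ref{Hlem1} and~\ref{Hlem1_restricted}, package into the bounds $\sigma_1$ and $\sigma_2$, so your route is the same one underlying those.
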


We say that $A:\N\to\N$ is a rate of divergence for $\sum\lambda_n=\infty$ if
\begin{equation}\label{condsum}\tag{Q$\Sigma$}
\forall k\in \N\, \left( \sum_{i=0}^{A(k)}\lambda_i \geq k\right).
\end{equation}
A quantitative treatment is also possible using instead the equivalent condition $\prod (1-\lambda_n)=0$, if we have a function $A':\N\times (0,1]\to \N$ such that for all $m\in \N$, $A'(m,\cdot)$ is a rate of convergence for $\prod_{i=m}^{\infty}(1-\lambda_i)=0$, i.e.
\begin{equation}\label{condprod}\tag{Q$\Pi$}
\forall m\in \N \forall \varepsilon\in (0,1]\, \left(\prod_{i=m}^{A'(m,\varepsilon)}(1-\lambda_i)\leq \varepsilon\right).
\end{equation}

For our analysis, we will need a quantitative version of this lemma, which we give bellow. The proof is identical to several similar previous results (see e.g. \cite[Lemmas 5.2,5.3]{KohlenbachLeustean(12)} and \cite[Lemmas 12,13]{Pinto(ta)}).
\begin{lemma}\label{Hlem1}
	Consider sequences of real numbers $(\lambda_n)\subset [0,1]$, 
$(a_n),(b_n)\subset\R$ and let $B\in\N^*$ be an upper bound on $(a_n)$ and assume that for all $n\in\N$
	\[a_{n+1}\leq (1-\lambda_n)a_n+\lambda_nb_n.\]
	Consider $\varepsilon >0, p,N\in \N$ such that $\forall i\in[N,p]\, \left(b_n\leq \frac{\varepsilon}{2}\right)$.
	\begin{itemize}
		\item[(i)] If $A:\N\to \N$ satisfies \eqref{condsum}, then
		\[\forall i\in[\sigma_1(\varepsilon, N), p]\, \left(a_i\leq \varepsilon\right),\]
		where $\sigma_1(\varepsilon, N)=\sigma_1[A, B](\varepsilon, N):=A\left(N+\lceil\ln\left(\frac{2B}{\varepsilon}\right)\rceil\right)+1$;
		\item[(ii)] If $A':\N\times (0,1]\to \N$ satisfies \eqref{condprod}, then
		\[\forall i\in[\sigma_2(\varepsilon, N), p]\, \left(a_i\leq \varepsilon\right),\]
		where $\sigma_2(\varepsilon, N)=\sigma_2[A',B](\varepsilon, N):=\max\{ A'(N, \frac{\varepsilon}{2B}), N\}+1$.
	\end{itemize}
\end{lemma}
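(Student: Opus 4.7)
The plan is to establish both parts from a single auxiliary inequality: for every $m\in[N,p]$,
\[ a_m \leq \prod_{i=N}^{m-1}(1-\lambda_i)\cdot B + \frac{\varepsilon}{2}. \]
I would prove this by induction on $m$. The base case $m=N$ follows from $a_N\leq B$ together with the empty-product convention. For the step, given $m<p$, the hypothesis yields $b_m\leq \varepsilon/2$, so the recursion combined with the inductive assumption gives
\[ a_{m+1} \leq (1-\lambda_m)\left(\prod_{i=N}^{m-1}(1-\lambda_i)B + \tfrac{\varepsilon}{2}\right) + \lambda_m \cdot \tfrac{\varepsilon}{2}, \]
and the convex combination $(1-\lambda_m)\tfrac{\varepsilon}{2}+\lambda_m\tfrac{\varepsilon}{2}=\tfrac{\varepsilon}{2}$ immediately delivers the desired bound at $m+1$, with the product now including the extra factor $(1-\lambda_m)$.

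With this auxiliary bound in hand, each part reduces to verifying $\prod_{i=N}^{m-1}(1-\lambda_i)\leq \varepsilon/(2B)$. For part (ii) this is immediate from \eqref{condprod}: once $m-1\geq A'(N,\varepsilon/(2B))$ the product is at most $\varepsilon/(2B)$, and the additional requirement $m\geq N$ (needed for the auxiliary inequality to apply) is absorbed by taking the maximum, giving exactly $\sigma_2(\varepsilon,N)=\max\{A'(N,\varepsilon/(2B)),N\}+1$. For part (i) I would combine the standard estimate $1-x\leq e^{-x}$ on $[0,1]$ with the product-to-sum passage, yielding
\[ \prod_{i=N}^{m-1}(1-\lambda_i) \leq \exp\left(-\sum_{i=N}^{m-1}\lambda_i\right), \]
so it suffices to force $\sum_{i=N}^{m-1}\lambda_i \geq \ln(2B/\varepsilon)$. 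Since each $\lambda_i\leq 1$ gives $\sum_{i=0}^{N-1}\lambda_i\leq N$, this in turn reduces to $\sum_{i=0}^{m-1}\lambda_i\geq N+\lceil\ln(2B/\varepsilon)\rceil$, which \eqref{condsum} applied at argument $N+\lceil\ln(2B/\varepsilon)\rceil$ guarantees as soon as $m\geq \sigma_1(\varepsilon,N)$.

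The argument is essentially a routine Gronwall/telescoping calculation, so no genuine obstacle is foreseen. The two points requiring care are purely organizational. First, the bound on $b_m$ is only consumed for $m\in[N,p-1]$ during the induction, which is exactly what allows the conclusion to quantify over $m\in[N,p]$ including the endpoint. Second, in part (i) one has to convert a partial sum starting at $N$ into one starting at $0$ in order to match the shape of \eqref{condsum}; this is the origin of the additive $+N$ inside the argument of $A$, and it is precisely here that the convention $\lceil x\rceil:=\max\{0,\lceil x\rceil\}$ fixed in the preliminaries is needed, so that the construction remains meaningful when $2B/\varepsilon<1$.
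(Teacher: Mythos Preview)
Your proof is correct and complete. The paper itself does not give a proof of this lemma, stating only that ``the proof is identical to several similar previous results'' and referring to \cite[Lemmas~5.2,~5.3]{KohlenbachLeustean(12)} and \cite[Lemmas~12,~13]{Pinto(ta)}; your argument is precisely the standard telescoping computation one finds there, including the observation that $A(k)+1\ge k$ (from $\lambda_i\le 1$) which secures $\sigma_1(\varepsilon,N)\ge N$ in part~(i).
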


The conclusion of Lemma~\ref{Hlem1} still holds true if the main inequality is only valid in the interval $[N,p]$.
\begin{lemma}\label{Hlem1_restricted}
	Consider sequences of real numbers $(\lambda_n)\subset [0,1]$, 
$(a_n),(b_n)\subset\R.$  Let $B\in\N^*$ be an upper bound on $(a_n)$ and $A:\N\to \N$ a function satisfying \eqref{condsum}. Let $\varepsilon\in(0,1]$ and $p,N\in \N$ be given. If
	\[\forall i\in[N,p]\, \left(a_{i+1}\leq (1-\lambda_i)a_i+\lambda_ib_i \
\mbox{and}\ b_i\leq \frac{\varepsilon}{2}\right),\]
	then
	\[\forall i\in[\sigma_1(\varepsilon, N), p]\, \left(a_i\leq \varepsilon\right),\]
	where $\sigma_1(\varepsilon, N)=\sigma_1[A, B](\varepsilon, N):=A\left(N+\lceil\ln\left(\frac{2B}{\varepsilon}\right)\rceil\right)+1$.
\end{lemma}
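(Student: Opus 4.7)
The plan is to iterate the recurrence on the interval where it is valid and then use the two bounds on $a_N$ and on $b_j$ together with the rate of divergence $A$. Specifically, for any $i \in [N,p]$ iterating $a_{j+1} \leq (1-\lambda_j)a_j+\lambda_j b_j$ from $j=N$ to $j=i$ yields the standard estimate
\[ a_{i+1} \leq \left(\prod_{j=N}^{i}(1-\lambda_j)\right) a_N + \sum_{j=N}^{i} \lambda_j b_j\prod_{k=j+1}^{i}(1-\lambda_k). \]
For the first summand I would use $a_N \leq B$ together with $\prod_{j=N}^{i}(1-\lambda_j) \leq \exp\bigl(-\sum_{j=N}^{i}\lambda_j\bigr)$; for the second summand I would use $b_j \leq \varepsilon/2$ together with the telescoping identity
\[ \sum_{j=N}^{i}\lambda_j\prod_{k=j+1}^{i}(1-\lambda_k) \;=\; 1-\prod_{k=N}^{i}(1-\lambda_k) \;\leq\; 1, \]
which immediately bounds the noise term by $\varepsilon/2$.

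It then remains to extract, via the rate $A$, a threshold on $i$ that forces $\sum_{j=N}^{i}\lambda_j \geq \ln(2B/\varepsilon)$, so that the leading term is at most $(\varepsilon/(2B))\cdot B=\varepsilon/2$. Since $\lambda_j\in[0,1]$ gives $\sum_{j=0}^{N-1}\lambda_j\leq N$, whenever $i \geq A\bigl(N+\lceil\ln(2B/\varepsilon)\rceil\bigr)$ the defining property $\sum_{j=0}^{A(k)}\lambda_j\geq k$ implies
\[ \sum_{j=N}^{i}\lambda_j \;=\; \sum_{j=0}^{i}\lambda_j - \sum_{j=0}^{N-1}\lambda_j \;\geq\; \lceil\ln(2B/\varepsilon)\rceil \;\geq\; \ln(2B/\varepsilon), \]
as needed. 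Combining the two bounds yields $a_{i+1}\leq \varepsilon/2+\varepsilon/2=\varepsilon$.

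The only genuine subtlety, and the place where one must be careful in contrast with Lemma~\ref{Hlem1}, is the bookkeeping of index ranges: because the hypothesis only holds on $[N,p]$, the iteration is legitimate only when $i\geq N$, and we must verify that $\sigma_1(\varepsilon,N)-1 \geq N$. This is automatic from two trivial observations: any rate of divergence satisfies $A(k)\geq k-1$ (since each $\lambda_j\leq 1$), and the assumption $\varepsilon\in(0,1]$ with $B\geq 1$ gives $\lceil\ln(2B/\varepsilon)\rceil\geq 1$. Hence $\sigma_1(\varepsilon,N) \geq N+1$, and the iteration covers exactly the indices $m=i+1 \in [\sigma_1(\varepsilon,N),p]$ claimed in the statement. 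No step is really an obstacle; this is essentially the same proof as \cite[Lemma 5.2]{KohlenbachLeustean(12)}, with only the range of the telescoping adjusted to start at $N$ rather than $0$.
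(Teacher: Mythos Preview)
Your argument is correct and is exactly the standard approach the paper has in mind: the paper gives no proof for this lemma beyond remarking that it is the same as Lemma~\ref{Hlem1}, whose proof in turn is declared ``identical to'' \cite[Lemmas 5.2,5.3]{KohlenbachLeustean(12)} and \cite[Lemmas 12,13]{Pinto(ta)}. Your iteration-plus-telescoping computation, the use of $1-\lambda\le e^{-\lambda}$, the bound $\sum_{j<N}\lambda_j\le N$ to shift the rate of divergence, and the index check $A(k)\ge k-1$ ensuring $\sigma_1(\varepsilon,N)\ge N+1$ are precisely the ingredients of those references, so you have simply spelled out what the paper leaves implicit.
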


\section{Viscosity approximation methods}\label{main}

\subsection{Stability conditions}

In \cite{Suzuki(07)}, Suzuki starts by giving perspicuous proofs of Xu's strong convergence results for the viscosity method (with strict contractions) of Browder and Halpern type iterations in uniformly smooth Banach spaces.   
\begin{theorem}[{\cite[Thm.4.1]{Xu(04)}}]
	Let $X$ be an uniformly smooth Banach space and $C$ a nonempty, closed, convex subset of $X$. Let $T$ and $\phi$ be a nonexpansive and a strict contraction on $C$, respectively. Assume that $Fix(T)\not=\emptyset.$ Consider the net $(x_{\alpha})$ satisfying
	\begin{equation*}
	x_{\alpha}=(1-\alpha)T(x_{\alpha}) + \alpha\phi(x_{\alpha}),
	\end{equation*}
	for $\alpha\in (0,1)$. Then $(x_{\alpha})$ strongly converges, as $\alpha\to 0$, to the unique $z$ satisfying $P\phi(z)=z$, where $P$ is the unique sunny retraction of $C$ onto $Fix(T)$.
\end{theorem}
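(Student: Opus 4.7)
The plan is to follow Suzuki's reduction to the (already established) Browder--Reich theorem, which asserts that the implicit net $y_\alpha(u) = (1-\alpha)T(y_\alpha(u)) + \alpha u$ converges strongly, as $\alpha\to 0$, to $P(u)$, where $P$ is the sunny nonexpansive retraction of $C$ onto $\mathrm{Fix}(T)$ (whose existence and nonexpansiveness is a standard consequence of uniform smoothness). First I would verify that $x_\alpha$ is well-defined: since $T$ is nonexpansive and $\phi$ is an $r$-contraction with $r<1$, the self-map $x\mapsto (1-\alpha)T(x) + \alpha\phi(x)$ is a strict contraction with factor $(1-\alpha)+\alpha r < 1$, so Banach's fixed point theorem supplies a unique $x_\alpha$.

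Next I would identify the candidate limit. Since $P$ is nonexpansive and $\phi$ is an $r$-contraction, $P\circ\phi$ is itself an $r$-contraction on $C$, hence (again by Banach) has a unique fixed point $z$ with $P\phi(z)=z$. The key structural observation is that $x_\alpha = y_\alpha(\phi(x_\alpha))$, i.e.\ $x_\alpha$ is itself the Browder point at parameter $\alpha$ with the moving anchor $\phi(x_\alpha)$. A short computation then shows that the map $u\mapsto y_\alpha(u)$ is nonexpansive: subtracting the defining equations for $y_\alpha(u)$ and $y_\alpha(v)$ and using nonexpansiveness of $T$ gives $\|y_\alpha(u)-y_\alpha(v)\| \le (1-\alpha)\|y_\alpha(u)-y_\alpha(v)\| + \alpha\|u-v\|$, whence $\|y_\alpha(u)-y_\alpha(v)\| \le \|u-v\|$.

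Combining these ingredients via the triangle inequality yields
\[
\|x_\alpha - z\| \le \|y_\alpha(\phi(x_\alpha)) - y_\alpha(\phi(z))\| + \|y_\alpha(\phi(z)) - z\| \le r\|x_\alpha - z\| + \|y_\alpha(\phi(z)) - z\|,
\]
and therefore $\|x_\alpha - z\| \le \tfrac{1}{1-r}\,\|y_\alpha(\phi(z)) - z\|$. Since by Browder--Reich $y_\alpha(\phi(z)) \to P\phi(z) = z$ as $\alpha\to 0$, the right-hand side tends to zero, giving $x_\alpha \to z$ strongly.

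The main obstacle is imported rather than produced from scratch: the Browder--Reich convergence theorem in uniformly smooth Banach spaces, together with the existence, uniqueness and nonexpansivity of the sunny nonexpansive retraction $P$. Once those facts are in hand, the viscosity statement is essentially free, which is the whole point of Suzuki's reduction: $\phi$ only enters through its contraction constant $r$ (giving a Banach-type closure via $P\circ\phi$), and the role of the fixed anchor $u$ is taken over by the self-referential anchor $\phi(x_\alpha)$, while all the genuine analytic content stays bundled inside the Browder net.
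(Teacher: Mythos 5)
Your proof is correct and follows exactly the route the paper has in mind: this theorem is quoted from Xu (and the paper itself gives no proof, only citing Suzuki's ``perspicuous'' reduction), and your argument --- write $x_\alpha = y_\alpha(\phi(x_\alpha))$, compare with $y_\alpha(\phi(z))$ for $z$ the unique fixed point of the $r$-contraction $P\circ\phi$, absorb the $r\|x_\alpha - z\|$ term, and invoke Browder--Reich for $y_\alpha(\phi(z))\to P\phi(z)=z$ --- is precisely Suzuki's comparison argument, whose quantitative counterpart is Theorem~\ref{SBtheo}. The only difference is that the paper's quantitative version replaces the exact fixed point $z$ of $P\circ\phi$ (and hence the sunny retraction) by finitely iterated approximate Browder points, but at the qualitative level of the stated theorem your argument is the intended one.
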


\begin{theorem}[{\cite[Thm.4.2]{Xu(04)}}] Under the same conditions as before, 
let $x_0\in C$, and $(\alpha_n)\subset (0,1)$ satisfy
\[ (C1) \ \lim\alpha_n=0, \ (C2) \ \sum \alpha_n=\infty,
		\ (C3) \ \lim\frac{\alpha_{n+1}}{\alpha_n}=1. \]
	Consider the the iteration $(x_n)$ defined by
\begin{equation*}
x_{n+1}=(1-\alpha_n)T(x_n) + \alpha_n\phi(x_n).
\end{equation*}
Then $(x_n)$ strongly converges to the unique $z$ satisfying $P\phi(z)=z$.
\end{theorem}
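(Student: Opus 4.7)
The plan is to follow the Suzuki-style reduction advertised by the paper: deduce strong convergence of the viscosity iteration from that of the ordinary Halpern iteration, which in uniformly smooth Banach spaces under (C1)--(C3) is Xu's own (non-viscosity) Halpern theorem. Since $X$ is uniformly smooth, the sunny nonexpansive retraction $P\colon C\to Fix(T)$ exists and is nonexpansive, so $P\phi$ is an $r$-contraction on the closed set $C$ and, by Banach's fixed point theorem, has a unique fixed point $z\in Fix(T)$. This is the $z$ named in the conclusion.

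Next, I would introduce the auxiliary plain Halpern sequence with anchor $\phi(z)$,
\[ w_{n+1}=(1-\alpha_n)T(w_n)+\alpha_n\phi(z),\qquad w_0:=\phi(z), \]
for which Xu's Halpern theorem yields $w_n\to P\phi(z)=z$. The remaining work is to control $a_n:=\|x_n-w_n\|$. Using nonexpansivity of $T$, the $r$-contractivity of $\phi$, and the triangle inequality $\|x_n-z\|\le a_n+\|w_n-z\|$, one obtains
\[ a_{n+1}\le (1-\alpha_n)a_n+\alpha_n r\,\|x_n-z\|\le \bigl(1-\alpha_n(1-r)\bigr)a_n+\alpha_n(1-r)\cdot \frac{r\,\|w_n-z\|}{1-r}. \]
This is precisely the shape required by Lemma~\ref{LemmaXu}, applied with $\lambda_n:=\alpha_n(1-r)$ (so $\sum\lambda_n=\infty$ by (C2)) and $b_n:=\frac{r}{1-r}\|w_n-z\|\to 0$. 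Hence $a_n\to 0$, and the triangle inequality yields $x_n\to z$.

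The main obstacle is not the comparison step — which is essentially algebraic once Lemma~\ref{LemmaXu} is in place — but the invocation of Xu's Halpern theorem, which is where uniform smoothness and condition (C3) actually do work (through the sunny retraction and the standard duality-map argument). Condition (C3), in particular, enters only through this black-boxed step and plays no role in the Suzuki reduction itself; this is reassuring, since the reduction is meant to work uniformly under whatever hypotheses guarantee the plain Halpern convergence. This matches the paper's stated program: the same reduction, analyzed quantitatively in Section~\ref{main}, will convert any rate (or rate of metastability) for the plain Halpern iteration into one for its viscosity counterpart.
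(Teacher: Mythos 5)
Your argument is correct, and it is precisely the Suzuki-style comparison that the paper endorses for this statement: the theorem is quoted from \cite{Xu(04)} and not reproved in the paper, which only records that Suzuki's proofs ``compare the viscosity iteration with the convergent original one'' -- exactly what you do. Your estimate $a_{n+1}\le \bigl(1-(1-r)\alpha_n\bigr)a_n+(1-r)\alpha_n\cdot\frac{r}{1-r}\|w_n-z\|$, combined with Lemma~\ref{LemmaXu} and the plain Halpern convergence theorem of \cite{Xu(02)i,Xu(02)ii} (which is where uniform smoothness and (C1)--(C3) are consumed), does close the proof, granted Reich's theorem \cite{Reich(80)} for the existence of the sunny nonexpansive retraction $P$ and Banach's fixed point theorem for $P\circ\phi$ on the closed set $C$. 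The only point worth flagging is how this differs from the machinery the paper actually builds for the general form of this result (Theorem~\ref{SHtheo}, Corollary~\ref{VHcor1}): there the authors deliberately avoid your two noneffective ingredients -- the retraction $P$ and the appeal to Banach's fixed point theorem -- because these block the extraction of uniform rates; instead they iterate the (meta)stability of the plain Halpern iterates finitely many times to produce approximate anchors $z_m$, and run your comparison inequality only on a finite window via Lemma~\ref{Hlem1_restricted}. So your route is the cleaner qualitative one and matches Suzuki's original argument for strict contractions, while the paper's reorganization buys rates of metastability, Meir-Keeler/Rakotch mappings $\phi$, families $(S_n)$, and the hyperbolic-space setting.
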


The new proofs compare the viscosity iteration with the convergent original one. Suzuki, then goes on to employ similar arguments to obtain extended convergence results to MKC-mappings $\phi$ and with $T$ replaced by a sequence of nonexpansive maps $(S_n)$ together with a stability condition that the original iteration defined on that sequence of maps is convergent. From a logical point of view, the only complicated aspect in Suzuki's proofs is the use of the sunny retraction (metric projection in Hilbert spaces) required to characterize the limit point $z$. We avoid this step by considering Cauchy points of the relevant iterations instead of the actual limit. This entails that a quantitative analysis of these results will extract a function that given a metastability rate for the assumed convergence (and some other parameters that illustrate the uniformity of the bound), outputs a rate of metastability for the the viscosity iteration in question. In this section, we elaborate on the quantitative conditions of our main results given in the next two sections. We then apply our main theorems in particular cases where metastability rates have been extracted and witness the assumption, to obtain several rates of metastability for viscosity-type iterations.

\begin{notation}\label{notation_parameters}
	Consider a function $\varphi$ on tuples of variables $\bar{x}$, $\bar{y}$. If we wish to consider the variables $\bar{x}$ as parameters we write $\varphi[\bar{x}](\bar{y})$. For simplicity of notation we may then even omit the parameters and simply write $\varphi(\bar{y})$.
\end{notation}

The next lemma states that we can always transform a given 
rate of metastability such that the interval of metastability is to the right 
of a given $N\in\N.$
\begin{lemma}\label{lowerbound_meta}
	If $(x_n)$ is a Cauchy sequence with rate of metastability $\varphi$, then
	\begin{equation*}
	\forall \varepsilon >0\, \forall f:\N\to \N\, \forall N\in\N\, \exists n\in [N, \theta(\varepsilon, f, N)] \forall i, j \in [n,f(n)]\, \left( d(x_i,x_j)\leq \varepsilon\right),
	\end{equation*}
	where $\theta(\varepsilon, f, N):=\theta[\varphi](\varepsilon, f, N):=\max\{N, \varphi(\varepsilon, f_N)\}$, where for all $m\in\N$ $f_N(m):=
f(\max\{N,m \})$ .
\end{lemma}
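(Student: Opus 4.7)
The plan is to apply the given rate of metastability $\varphi$ to the perturbed counting function $f_N$ and then shift the witness $n$ upwards to make sure it lies above $N$, while checking that the shifted interval is still contained in the original one.

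First I would instantiate the metastability property of $\varphi$ at $\varepsilon$ and $f_N$: this produces some $n_0 \le \varphi(\varepsilon, f_N)$ with
\[
\forall i,j \in [n_0, f_N(n_0)]\,\left(d(x_i,x_j)\le \varepsilon\right).
\]
Then I would set $n := \max\{N, n_0\}$. By construction $n \ge N$ and $n \le \max\{N, \varphi(\varepsilon, f_N)\} = \theta(\varepsilon, f, N)$, so $n$ lies in the required interval $[N, \theta(\varepsilon,f,N)]$.

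The key observation is the identity $f_N(n_0) = f(\max\{N, n_0\}) = f(n)$, together with $n \ge n_0$. This gives the inclusion $[n, f(n)] \subseteq [n_0, f_N(n_0)]$, so the metastability bound transfers from the latter interval to the former, yielding $d(x_i,x_j)\le\varepsilon$ for all $i,j\in[n,f(n)]$.

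There is no real obstacle here; the statement is essentially a bookkeeping exercise on the quantifiers in the definition of rate of metastability. The only minor point to get right is the case analysis in $n = \max\{N,n_0\}$: when $n_0 \ge N$ one has $n = n_0$ and the claim is immediate, while when $n_0 < N$ one uses that $f_N$ was defined precisely so that $f_N(n_0) = f(N) = f(n)$, keeping the good interval aligned with $f(n)$.
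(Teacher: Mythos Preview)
Your proof is correct and follows essentially the same approach as the paper's own proof: apply $\varphi$ to $(\varepsilon,f_N)$ to obtain a witness $n_0$, then take $n:=\max\{N,n_0\}$ and use $f_N(n_0)=f(n)$ together with $n_0\le n$ to get the inclusion $[n,f(n)]\subseteq [n_0,f_N(n_0)]$. The only difference is notational (the paper writes $n'$ where you write $n_0$) and that you spell out the case distinction for $\max\{N,n_0\}$ a bit more explicitly.
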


\begin{proof}
	Let $\varepsilon, f$ and $N$ be given. Since $\varphi$ is a rate of metastability for $(x_n)$, we have that for some $n'\leq \varphi(\varepsilon, f_N)$,
	\begin{equation*}
	\forall i, j \in [n', f_N(n')]\, \left(d(x_i, x_j)\leq \varepsilon\right).
	\end{equation*}
	Let $n=\max\{N, n'\}$. Then $n\in[N, \max\{N, \varphi(\varepsilon, f_N)\}]=[N, \theta(\varepsilon, f, N)]$. Since $n'\leq n$ and $f_N(n')=f(n)$, the result follows.
\end{proof}
Let $C\subseteq X$ be a bounded convex subset of a hyperbolic space $X$ and 
$\N^*\in b\ge diam(C).$ \\ 
For $u\in C$ let $(y_n(u))$ be the $(S_n)$-Browder sequence with anchor point $u$ satisfying \eqref{SnB}. We say that a \emph{monotone} function $\theta_b$ satisfies the condition \eqref{BSncondition} if for all $u\in C$,
\begin{equation}\label{BSncondition}\tag{B[$S_n$]}
\begin{split}
\forall \varepsilon >0\, \forall f:&\N\to \N\, \forall N\in\N\, \exists n\in [N, \theta_b(\varepsilon, f, N)]\\
&\forall i, j \in [n,f(n)]\, \left( d(y_i(u),y_j(u))\leq \varepsilon\right).
\end{split}
\end{equation}

\begin{remark}\label{remark1}
	Note that the function $\theta_b$ is uniform on $u\in C$, depending only on the bound $b\in\N^*$ for the diameter of $C$. Furthermore, the monotonicty condition here is to be understood in the sense that the function $\theta_b$ is nondecreasing in $N$. Such assumption is by convenience, as one can always consider $\theta_b^{maj}(\varepsilon, f, N):=\max_{N'\leq N}\{\theta_b(\varepsilon, f, N')\}$. Finally, we remark that by {\rm Lemma~\ref{lowerbound_meta}}, such a function $\theta_b$ exists whenever we have a uniform metastability rate $\varphi_b$ for $(y_n(u))$.
\end{remark}
	
Similarly, for $u\in C$ let $(w_n(u))$ be the $(S_n)$-Halpern iteration with anchor and starting point $u$ defined by \eqref{SnH}. We say that a monotone function $\theta'_b$ satisfies the condition \eqref{HSncondition} if for all $u\in C$,
\begin{equation}\label{HSncondition}\tag{H[$S_n$]}
\begin{split}
\forall \varepsilon >0\, \forall f:&\N\to \N\, \forall N\in\N\, \exists n\in [N, \theta'_b(\varepsilon, f, N)]\\
&\forall i, j \in [n,f(n)]\, \left( d(w_i(u),w_j(u))\leq \varepsilon\right).
\end{split}
\end{equation}

We also write the previous conditions for constant sequences $(S_n)$ to discuss particular instances of the main theorems. Let $T$ be a nonexpansive map.

A monotone function $\theta_b$ satisfies the condition \eqref{Bcondition} if for all $u\in C$,
\begin{equation}\label{Bcondition}\tag{B[$T$]}
\begin{split}
\forall \varepsilon >0\, \forall f:&\N\to \N\, \forall N\in\N\, \exists n\in [N, \theta_b(\varepsilon, f, N)]\\
&\forall i, j \in [n,f(n)]\, \left( d(y_i(u),y_j(u))\leq \varepsilon\right),
\end{split}
\end{equation}
where $(y_n(u))$ is the $(S_n)$-Browder sequence for $S_n\equiv T$, i.e. the original Browder type iteration.

A monotone function $\theta'_b$ satisfies the condition \eqref{Hcondition} if for all $u\in C$,
\begin{equation}\label{Hcondition}\tag{H[$T$]}
\begin{split}
\forall \varepsilon >0 \forall f:&\N\to \N \forall N\in\N \exists n\in [N, \theta'_b(\varepsilon, f, N)]\\
&\forall i, j \in [n,f(n)]\, \left( d(w_i(u),w_j(u))\leq \varepsilon\right),
\end{split}
\end{equation}
where $(w_n(u))$ is the $(S_n)$-Halpern iteration for $S_n\equiv T$, i.e. the original Halpern type iteration.

\subsection{Browder-viscosity}

In this section we prove a quantitative version of Theorem 7 in 
\cite{Suzuki(07)}.

\begin{theorem}\label{SBtheo}
	Let $C$ be a nonempty bounded convex subset of a hyperbolic space and let $b\in \N^*$ be a bound on the diameter of $C$. Consider $(S_n)$ a family of nonexpansive maps on $C$, an MKC mapping $\phi:C\to C$  with Rakotch-modulus $\delta$, and a sequence $(\alpha_n)\subset (0,1]$. Let $\theta_b$ be a monotone function satisfying \eqref{BSncondition}. Define 
	\begin{equation*}
	\Psi(\varepsilon, f,N)=\Psi[b, \delta, \theta_b](\varepsilon, f,N)=\theta_b\left(\varepsilon_0, f, \Psi_M\right),
	\end{equation*}
	where
	\begin{equation*}
	\begin{gathered}
	\Psi_0=N \text{ and } \Psi_{m+1}=\theta_b\left(\varepsilon_0, f_{M-m},
\Psi_m\right), \text{ with }\\
	\tilde{\varepsilon}=\frac{\varepsilon\delta(\varepsilon/2)}{4}, \, \varepsilon_0=\frac{\tilde{\varepsilon}\delta(\tilde{\varepsilon})}{2}, \, M=\left\lceil\log_{1-\delta(\tilde{\varepsilon})}\left(\frac{\tilde{\varepsilon}}{2b}\right)\right\rceil \text{ and}\\
	\text{for all } p\in\N,\, \begin{cases}
	f_0(p)=f(p),\\
	f_{m+1}(p)=\max\left\{f(p),\, \theta_b\left(\varepsilon_0, f_m, p\right)\right\} & \text{for } m<M.
	\end{cases}
	\end{gathered}
	\end{equation*}
Then for $(x_n)$ satisfying \eqref{VSnB} we have
\[ \begin{array}{l}\forall N\in\N\,\forall \varepsilon>0\,\forall f:\N\to\N\,\exists n\in [N,
\Psi(\varepsilon,f,N)] \ \exists z\in C\\[1mm] \hspace*{1cm} 
\left( d(z,y_n(\phi(z)))\le\varepsilon\wedge\forall i\in [n,f(n)] \,(
d(x_i,z)\le \frac{\varepsilon}{2})\right). \end{array}\]
In particular, $\Psi(\varepsilon,f,0)$ is a rate of metastability for 
$(x_n).$
\end{theorem}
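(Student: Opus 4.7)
The plan is to reduce the goal to producing $z \in C$ and $n \in [N, \Psi(\varepsilon, f, N)]$ such that $d(y_i(\phi(z)), z) \le \tilde{\varepsilon}$ for all $i \in [n, f(n)]$. This reduction rests on the key inequality $d(x_i, y_i(u)) \le d(\phi(x_i), u)$, which follows from \eqref{VSnB} and \eqref{SnB} by applying (W4) and the nonexpansiveness of $S_n$: writing both defining equations out, using (W4) to get $d(x_i, y_i(u)) \le (1-\alpha_i)d(x_i, y_i(u)) + \alpha_i d(\phi(x_i), u)$ and cancelling (using $\alpha_i > 0$). Specializing $u = \phi(z)$ and invoking Lemma~\ref{MKcontractionR} shows that $d(x_i, z) \ge \varepsilon/2$ would force $d(y_i(\phi(z)), z) \ge \delta(\varepsilon/2) \cdot (\varepsilon/2) = 2\tilde{\varepsilon}$, contradicting $d(y_i(\phi(z)), z) \le \tilde{\varepsilon}$. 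The case $i=n$ then also gives $d(z, y_n(\phi(z))) \le \tilde{\varepsilon} \le \varepsilon$.

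To build the witness, fix an arbitrary $z_0 \in C$ and perform $M+1$ nested applications of \eqref{BSncondition}. At level $k \in \{0, \ldots, M\}$, invoke \eqref{BSncondition} at anchor $\phi(z_k)$, with parameter-function $f_{M-k}$ and starting index $n_{k-1}$ (with the convention $n_{-1} := N$), producing $n_k$ such that $y_i(\phi(z_k))$ stays $\varepsilon_0$-close to $y_{n_k}(\phi(z_k))$ throughout $[n_k, f_{M-k}(n_k)]$. For $k < M$ set $z_{k+1} := y_{n_k}(\phi(z_k))$, and finally take $z := z_{M+1} := y_{n_M}(\phi(z_M))$ and $n := n_M$. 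An induction using the monotonicity of $\theta_b$ in its last argument shows $n_k \le \Psi_{k+1}$, so $n \le \Psi_{M+1} = \Psi(\varepsilon, f, N)$, while $n \ge n_{M-1} \ge \cdots \ge n_0 \ge N$.

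The central estimate is a bridging argument for $a_k := d(z_{k+1}, z_k)$. Decomposing through $y_{n_k}(\phi(z_{k-1}))$, the nonexpansiveness of $u \mapsto y_{n_k}(u)$ (again from \eqref{SnB} and (W4)) bounds the first piece by $d(\phi(z_k), \phi(z_{k-1}))$, and the design $f_{m+1}(p) = \max\{f(p), \theta_b(\varepsilon_0, f_m, p)\}$ ensures $n_k \le f_{M-k+1}(n_{k-1})$, placing both $n_{k-1}$ and $n_k$ inside the stability interval of level $k-1$, so the second piece is at most $\varepsilon_0$. Combining with the Rakotch modulus whenever $a_{k-1} \ge \tilde{\varepsilon}$ yields $a_k \le (1-\delta(\tilde{\varepsilon}))\, a_{k-1} + \varepsilon_0$. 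Unrolling from $a_0 \le b$, the choice $M = \lceil \log_{1-\delta(\tilde{\varepsilon})}(\tilde{\varepsilon}/(2b))\rceil$ together with $\varepsilon_0/\delta(\tilde{\varepsilon}) = \tilde{\varepsilon}/2$ gives $a_M \le 3\tilde{\varepsilon}/2$, where the possibility $a_{k-1} < \tilde{\varepsilon}$ is absorbed via nonexpansiveness alone (once $a_{k-1} < \tilde{\varepsilon}$ one checks $a_k \le 3\tilde{\varepsilon}/2$ inductively). A final bridging step through $y_i(\phi(z_M))$ gives $d(y_i(\phi(z)), z) \le d(\phi(z), \phi(z_M)) + \varepsilon_0 \le a_M + \varepsilon_0 < 2\tilde{\varepsilon}$, which closes the reduction of the first paragraph and delivers both claims.

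The main obstacle is the synchronisation of the nested metastability invocations: the index produced at each level must lie inside the stability interval of the previous level. This is exactly why $f_{M-k+1}$ is set to dominate $\theta_b(\varepsilon_0, f_{M-k}, \cdot)$, and why $\Psi_{k+1}$ is defined as the monotone envelope $\theta_b(\varepsilon_0, f_{M-k}, \Psi_k)$. A secondary technical point is controlling the recursion $a_{k} \le (1-\delta(\tilde{\varepsilon})) a_{k-1} + \varepsilon_0$ in the regime where $a_{k-1}$ is below $\tilde{\varepsilon}$ and the Rakotch contraction does not apply; a routine case analysis shows that once $a_{k_0} < \tilde{\varepsilon}$ the sequence $(a_k)_{k\ge k_0}$ remains bounded by $3\tilde{\varepsilon}/2$, so the same conclusion survives.
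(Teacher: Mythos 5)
Your proof is correct, and up to the endgame it follows the paper's own argument: the same nested invocations of \eqref{BSncondition} with the functions $f_m$, the same recursion defining $\Psi_m$ (so the bound $\Psi(\varepsilon,f,N)$ and the index estimates $N\le n\le\Psi_{M+1}$ come out identically), the same key facts $d(x_i,y_i(u))\le d(\phi(x_i),u)$ and $d(y_n(u),y_n(v))\le d(u,v)$ from (W4) plus nonexpansivity of $S_n$ and $\alpha_n>0$, and the same contraction estimate $a_k\le(1-\delta(\tilde{\varepsilon}))a_{k-1}+\varepsilon_0$ when $a_{k-1}\ge\tilde{\varepsilon}$. Where you genuinely diverge is in how the construction is terminated. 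The paper shows (by the case distinction ``either some gap is already $\le\tilde{\varepsilon}$, or the contraction recursion unrolls'') that there exists $m_0\le M$ with $d(z_{m_0+1},z_{m_0})\le\tilde{\varepsilon}$, outputs $z:=z_{m_0+1}$, $n:=n_{m_0+1}$, and compares $x_i$ with $y_i(\phi(z_{m_0}))$, paying an extra $\tilde{\varepsilon}$ to pass from $\phi(z_{m_0})$ to $\phi(z)$. You instead always output the top level, $z:=z_{M+1}$, $n:=n_M$, prove \emph{unconditionally} that $a_M\le 3\tilde{\varepsilon}/2$ via the absorption invariant (once $a_k\le 3\tilde{\varepsilon}/2$ this persists, whether or not the Rakotch step is available, since $\varepsilon_0\le\tilde{\varepsilon}\delta(\tilde{\varepsilon})/2$), and re-anchor on the Browder path: $d(y_i(\phi(z)),z)\le d(\phi(z),\phi(z_M))+\varepsilon_0<2\tilde{\varepsilon}$. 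Because your comparison of $x_i$ is anchored at $\phi(z)$ directly, the total error budget is again strictly below the threshold $2\tilde{\varepsilon}=\delta(\varepsilon/2)\cdot\varepsilon/2$ used in the Rakotch contradiction, so the constants close and both conjuncts follow; what your variant buys is a fixed witness (no selection of $m_0$), while the paper's selection buys the sharper bound $\tilde{\varepsilon}$ on the last gap. One cosmetic inconsistency: your opening reduction demands $d(y_i(\phi(z)),z)\le\tilde{\varepsilon}$, but you only deliver $<2\tilde{\varepsilon}$; this is harmless, since your contradiction argument only needs a bound strictly below $2\tilde{\varepsilon}$ and $2\tilde{\varepsilon}\le\varepsilon$ takes care of the first conjunct, but the reduction should be restated with that weaker target.
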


\begin{proof}
	Let $N\in \N$, $\varepsilon >0$ and $f:\N\to \N$ be given. Note that the definition of the natural number $M$ implies $\left(1-\delta(\tilde{\varepsilon})\right)^M\leq \frac{\tilde{\varepsilon}}{2b}$. Consider the functions $f_0, \cdots, f_M$ as in the theorem.  
	Define $z_0=x_0$ and $n_0=N$. For $m\leq M$, assume that $z_m$ and $n_m$ are already defined and let $(y_n^m)_n$ be the $(S_n)$-Browder sequence with anchor point $\phi(z_m)$, i.e. $y_n^m=y_n(\phi(z_m))$. By \eqref{BSncondition}, we can take $n_{m+1}\in[n_m, \theta_b(\varepsilon_0, f_{M-m}, n_m)]$ such that
	\begin{equation*}
	\forall i\in [n_{m+1}, f_{M-m}(n_{m+1})]\, \left(d(y^m_i,y^m_{n_{m+1}})\leq \varepsilon_0\right).
	\end{equation*} 
	Define $z_{m+1}=y^m_{n_{m+1}}$.
	
	Notice that for $m\leq M-1$, we have $n_{m+2}\in [n_{m+1}, \theta_b(\varepsilon_0, f_{M-m-1}, n_{m+1})]\subset [n_{m+1}, f_{M-m}(n_{m+1})]$, and so
	\begin{equation*}
	d(y^m_{n_{m+2}},y^m_{n_{m+1}})\leq \varepsilon_0.
	\end{equation*}
	
	For $m\in [1, M]$, by (W4) and the fact that $S_{n_{m+1}}$ is nonexpansive, we have
	\begin{equation*}
	d(y^m_{n_{m+1}},y^{m-1}_{n_{m+1}})\leq (1-\alpha_{n_{m+1}})d(y^m_{n_{m+1}},y^{m-1}_{n_{m+1}})+\alpha_{n_{m+1}}d(\phi(z_m),\phi(z_{m-1})),
	\end{equation*}
	which gives $d(y^m_{n_{m+1}},y^{m-1}_{n_{m+1}})\leq d(\phi(z_m),\phi(z_{m-1}))$.
	Thus, for $m\in [1, M]$,
	\begin{equation*}
	d(z_{m+1},z_m)\leq d(y^m_{n_{m+1}},y^{m-1}_{n_{m+1}})+d(y^{m-1}_{n_{m+1}},y^{m-1}_{n_m})\leq d(\phi(z_m),\phi(z_{m-1}))+\varepsilon_0.
	\end{equation*}
	
	Now we argue that there is $m_0\leq M$ such that $d(z_{m_0+1},z_{m_0})\leq \tilde{\varepsilon}$. Assume that for all $m\leq M-1$, $d(z_{m+1},z_m)>\tilde{\varepsilon}$, which gives $d(\phi(z_{m+1}),\phi(z_m))\leq (1-\delta(\tilde{\varepsilon}))d(z_{m+1},z_m)$, since $\delta$ is a Rakotch-modulus for $\phi$. Hence, for $m\in[1,M]$
	\begin{equation*}
	d(z_{m+1},z_{m})\leq (1-\delta(\tilde{\varepsilon}))d(z_m,z_{m-1})+\varepsilon_0.
	\end{equation*}
	By induction
	\begin{align*}
	d(z_{M+1},z_{M})&\leq (1-\delta(\tilde{\varepsilon}))^Md(z_1,z_0)+\varepsilon_0\sum_{i=0}^{M-1}\left(1-\delta(\tilde{\varepsilon})\right)^i\\
	&\leq \frac{\tilde{\varepsilon}}{2b}b + \frac{\varepsilon_0}{\delta(\tilde{\varepsilon})}=\tilde{\varepsilon}.
	\end{align*}
	
	Let $m_0\leq M$ be such that $d(z_{m_0+1},z_{m_0})\leq \tilde{\varepsilon}$. We now write $y_n\equiv y^{m_0}_n$ and $z=z_{{m_0+1}}$. We have $[n_{m_0+1}, f(n_{m_0+1})]\subset [n_{m_0+1}, f_{M-m_0}(n_{m_0+1})]$ and so
	\begin{equation}\label{distanceyitoz}
	\forall i \in[n_{m_0+1}, f(n_{m_0+1})]\, \left(d(y_i,z)\leq \varepsilon_0\leq \tilde{\varepsilon}\right).
	\end{equation}
	By (W4) we have for $i\in[n_{m_0+1}, f(n_{m_0+1})]$ (using again that 
$S_i$ is nonexpansive),
	\begin{equation*}
	d(x_i,y_i)\leq (1-\alpha_i)d(x_i,y_i)+\alpha_id(\phi(x_i),\phi(z_{m_0})),
	\end{equation*}
	which implies, using the fact that $\phi$ is nonexpansive,
	\begin{align}
	d(x_i,y_i)&\leq d(\phi(x_i),\phi(z_{m_0})) \leq d(\phi(x_i),\phi(z))+d(\phi(z),\phi(z_{m_0}))\nonumber\\
	&\leq d(\phi(x_i),\phi(z))+d(z_{m_0+1},z_{m_0})\leq d(\phi(x_i),\phi(z))+\tilde{\varepsilon}\label{distancexitoyi}.
	\end{align}
	An inductive argument and the monotoniticty of $\theta_b$ is the last variable, entail $N\le n_{m_0+1}\leq n_{M+1}\leq \Psi(\varepsilon,f, N)$. We now argue that for $i\in[n_{m_0+1}, f(n_{m_0+1})]$, $d(x_i,z)\leq \frac{\varepsilon}{2}$. Assume that for some $i\in[n_{m_0+1}, f(n_{m_0+1})]$, $d(x_i,z)>\frac{\varepsilon}{2}$. Since $\delta$ is a Rakotch-modulus for $\phi$,
	\begin{equation*}
	d(\phi(x_i),\phi(z))\leq (1-\delta(\varepsilon/2))d(x_i,z),
	\end{equation*}
	and then, using \eqref{distanceyitoz} and \eqref{distancexitoyi},
	\begin{align*}
	d(x_i,z)&\leq d(x_i,y_i)+d(y_i,z)\leq d(\phi(x_i),\phi(z))+ \tilde{\varepsilon} + d(y_i,z)\\
	&\leq (1-\delta(\varepsilon/2))d(x_i,z)+2\tilde{\varepsilon}. 
	\end{align*}
	This implies the contradiction $d(x_i,z)\leq \frac{2\tilde{\varepsilon}}{\delta(\varepsilon/2)}=\frac{\varepsilon}{2}$. \\ 
Using $d(z,z_{m_0})\le \tilde{\varepsilon}\le \varepsilon,$ (W4) and the 
nonexpansivity of $S_{n_{m_0+1}}$ and $\phi$, we get  
\[ d(z,y_{n_{m_0+1}}(\phi(z)))=
d(y_{n_{m_0+1}}(\phi(z_{m_0})),y_{n_{m_0+1}}(\phi(z)))\le \varepsilon,\]
which entails that the theorem is satisfied with $z$ and $n:=n_{m_0+1}$.
\end{proof}

\begin{remark}\label{finitization1}
{\rm Theorem \ref{SBtheo}} implies that (already for $N=0$) 
$\Psi(\varepsilon,f,N)$ is a rate of metastability for $(x_n).$ 
We wrote the statement in a more informative form 
so that it implies in an elementary way that (if $X$ is 
complete and $C$ is closed) the limit $x$ of $(x_n)$ actually is the (unique) 
fixed point of $P\circ\phi,$ where ${P(u):=\lim y_n(u)}$, which is 
an important further information in Suzuki's theorem: let $\varepsilon >0$ and 
$N\in\N$ be so large that 
\[ (1)\ \forall k\ge N\ (d(x_k,x)\,,\, 
d(y_k(\phi(x)),P(\phi(x)))\le \varepsilon). \] 
Then by the theorem, we get an $n\ge N$ and a $z\in C$ such that 
\[ (2) \ d(z,y_n(\phi(z)))\,,\, d(x_n,z)\le \varepsilon.\]
Using $(1),(2)$  $d(x,z)\le 2\varepsilon$ and so by (W4) and the nonexpansivity 
of $S_n$ and $\phi$ 
\[ (3) \ d(y_n(\phi(x)),y_n(\phi(z)))\le 2\varepsilon. \]
Hence using $(2)$,  
\[ (4) \ d(x,y_n(\phi(x)))\le d(z,y_n(\phi(z)))+4\varepsilon\le 5\varepsilon\]
and so by $(1)$ 
\[ (5) \ d(x,P(\phi(x)))\le 6\varepsilon. \]
Since $\varepsilon >0$ was arbitrary, we have that $x=P(\phi(x)).$
\end{remark}

\begin{remark}\label{unboundedness}
	{\rm Theorem~\ref{SBtheo}} can be adapted to dispense with the boundedness assumption on the set $C$, and assume the existence of a natural number $b\in\N^*$ that only satisfies $b\geq \max\{d(x_0, \phi(x_0)), d(x_0, y_n(\phi(x_0))) : n \in \N\}$. In the original proof by Suzuki, one considers that all the $(S_n)$-Browder type sequences converge and thus, the particular fact that $(y_n(\phi(x_0)))$ is bounded trivially follows. In this situation, one assumes that for each $r\in \N^*$, the function $\theta_{r}$ satisfies the formula \eqref{BSncondition} only for anchor points $u\in C \cap B_r(x_0)$, where $B_r(x_0)$ denotes the closed ball centred at $x_0$ with radius $r$. Then, {\rm Theorem~\ref{SBtheo}} can be adapted to hold with the bound $\Psi(\varepsilon, f,N)=\theta_{(M+1)b}\left(\varepsilon_0, f, \Psi_M\right)$, where $\tilde{\varepsilon}$, $\varepsilon_0$, $M$ are as before and
	\[
	\Psi_0=N \text{ and } \Psi_{m+1}=\theta_{(m+1)b}\left(\varepsilon_0, f_{M-m},
	\Psi_m\right),
	\]
	with, for all $p\in\N$
	\begin{equation*}
			\begin{cases}
				f_0(p)=f(p),\\
				f_{m+1}(p)=\max\left\{f(p),\, \theta_{(M-m+1)b}\left(\varepsilon_0, f_m, p\right)\right\} & \text{for } m<M.
			\end{cases}
	\end{equation*}
	Indeed, the proof can be carried out with similar arguments. The definition of the natural numbers $n_m$ and the points $z_m$ is similar to before. Assume $n_m$ and $z_m$ are already defined, with $d(x_0, z_m)\leq m\cdot b$. From the hypothesis on $\theta_{(\cdot)}$ and the fact that
	\[
	d(x_0, \phi(z_m))=d(x_0, \phi(x_0)) + d(\phi(x_0), \phi(z_m))\leq (m+1)b,
	\]
	we can define $n_{m+1}\in [n_m, \theta_{(m+1)b}(\eps_0, f_{M-m}, n_m)]$ satisfying
	\[
	\forall i\in[n_{m+1}, f_{M-m}(n_{m+1})]\, \left( d(y^m_i, y^m_{n_{m+1}})\leq \eps_0 \right).
	\]
	Moreover, with $z_{m+1}=y^m_{n_{m+1}}$, we have
	\[
	\begin{split}
	d(x_0, z_{m+1})&\leq d(x_0, y_{n_{m+1}}(\phi(x_0))) + d(y_{n_{m+1}}(\phi(x_0)), y_{n_{m+1}}(z_m))\\
	&\leq b+ d(x_0, z_m) \leq (m+1)b,	
	\end{split}
	\]
	which ensures that this construction holds for all $m\leq M$. Since it still holds that $[n_{m+1}, \theta_{(m+2)b}(\eps_0, f_{M-m-1}, n_{m+1})]\subset [n_{m+1}, f_{M-m}(n_{m+1})]$, we again have $d(y^m_{n_{m+2}}, y^m_{n_{m+1}})\leq \eps_0$.
	The remainder of the proof follows the same arguments as before, noticing that the new condition on the natural number $b$ suffices to show the existence of $z_{m_0}$ satisfying $d(z_{m_0+1}, z_{m_0})\leq \tilde{\varepsilon}$, as it still entails ${(1-\delta(\tilde{\varepsilon}))^M}d(z_1, z_0)\leq\tilde{\varepsilon}/2$.
\end{remark}

An application of Theorem~\ref{SBtheo} gives a rate of metastability for the Browder-viscosity iteration from a function $\theta_b$ satisfying \eqref{Bcondition}.
\begin{corollary}\label{VBcor1}
		Let $C$ be a nonempty bounded convex subset of a hyperbolic space and let $b\in \N^*$ be a bound on the diameter of $C$. Consider $T$ a nonexpansive map on $C$, an $r$-contraction $\phi:C\to C$  with $\delta\in(0,1)$ such that $r\leq 1-\delta$, and a sequence $(\alpha_n)\subset (0,1]$. Let $\theta_b$ be a monotone function satisfying \eqref{Bcondition}. Then $(x_n)$ satisfying \begin{equation*}
		x_n=(1-\alpha_n)T(x_n)\oplus \alpha_n\phi(x_n), \, n\in\N
		\end{equation*}
		is a Cauchy sequence with metastability rate
		\begin{equation*}
		\Psi(\varepsilon, f)=\Psi[b, \delta, \theta_b](\varepsilon, f)=\theta_b\left(\varepsilon_0, f, \Psi_M\right),
		\end{equation*}
		where
	\begin{equation*}
	\begin{gathered}
	\Psi_0=0 \text{ and } \Psi_{m+1}=\theta_b\left(\varepsilon_0, f_{M-m}, \Psi_m\right), \text{ with }\\
	\varepsilon_0=\frac{\varepsilon\delta^2}{8}, \, M=\left\lceil\log_{1-\delta}\left(\frac{\varepsilon\delta}{8b}\right)\right\rceil \text{ and}\\
	\text{for all } p\in\N,\, \begin{cases}
	f_0(p)=f(p),\\
	f_{m+1}(p)=\max\left\{f(p),\, \theta_b\left(\varepsilon_0, f_m, p\right)\right\} & \text{for } m<M.
	\end{cases}
	\end{gathered}
	\end{equation*}
\end{corollary}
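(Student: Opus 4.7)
The plan is to derive Corollary \ref{VBcor1} as a direct specialization of Theorem \ref{SBtheo}. The corollary concerns the special case in which the family $(S_n)$ is constant with $S_n \equiv T$, the MKC mapping $\phi$ is an $r$-contraction, and the metastability bound is instantiated at $N = 0$. All of these are immediately subsumed by the hypotheses of Theorem \ref{SBtheo} once the relevant Rakotch-modulus is identified.

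First, I would observe that the $r$-contraction $\phi$ is in particular an MKC, and that since $r \le 1-\delta$ we have $d(\phi(x),\phi(y)) \le r\,d(x,y) \le (1-\delta)\,d(x,y)$ for all $x,y\in C$. Hence the constant function $\tilde\delta(\varepsilon)\equiv \delta$ is a valid Rakotch-modulus for $\phi$; this is precisely the ``constant Rakotch-modulus'' case discussed at the end of Section~2.1. Moreover, for the constant family $S_n\equiv T$, the $(S_n)$-Browder sequence $(y_n(u))$ of Theorem \ref{SBtheo} coincides with the Browder-type iteration used in condition \eqref{Bcondition}, so the assumed $\theta_b$ automatically verifies the hypothesis \eqref{BSncondition} of Theorem \ref{SBtheo}.

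The second and only remaining step is to apply Theorem \ref{SBtheo} with these choices and with $N=0$, and to verify that the various parameters $\tilde\varepsilon$, $\varepsilon_0$, $M$, $f_m$, $\Psi_m$ of that theorem reduce to those stated in the corollary. Substituting $\tilde\delta \equiv \delta$ yields $\tilde\varepsilon = \varepsilon\,\delta(\varepsilon/2)/4 = \varepsilon\delta/4$, then $\varepsilon_0 = \tilde\varepsilon\,\delta(\tilde\varepsilon)/2 = \varepsilon\delta^2/8$, and $M = \lceil \log_{1-\delta(\tilde\varepsilon)}(\tilde\varepsilon/(2b)) \rceil = \lceil \log_{1-\delta}(\varepsilon\delta/(8b)) \rceil$, matching the formulas in the statement. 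The recursive definitions of $f_m$ and $\Psi_m$ transfer verbatim, with the base case $\Psi_0 = N = 0$. By the ``in particular'' clause of Theorem \ref{SBtheo}, $\Psi(\varepsilon,f) := \Psi(\varepsilon,f,0)$ is thus a rate of metastability for $(x_n)$, and since the metastability property is (non-effectively) equivalent to the Cauchy property, this also yields that $(x_n)$ is a Cauchy sequence.

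There is no genuine obstacle here; the result is a routine specialization. The only point requiring mild care is the clean propagation of the constant Rakotch-modulus through the nested expressions defining $\Psi$, but no argument beyond Theorem \ref{SBtheo} itself is needed.
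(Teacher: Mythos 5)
Your proposal is correct and is exactly the paper's intended argument: the corollary is stated as a direct application of Theorem \ref{SBtheo}, obtained by taking the constant family $S_n\equiv T$, the constant Rakotch-modulus $\delta(\varepsilon)\equiv\delta$ (valid since $r\le 1-\delta$), and $N=0$, and your substitutions $\tilde\varepsilon=\varepsilon\delta/4$, $\varepsilon_0=\varepsilon\delta^2/8$, $M=\lceil\log_{1-\delta}(\varepsilon\delta/(8b))\rceil$ reproduce the stated bound. Nothing further is needed.
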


\begin{corollary} \label{VB-rate}
Let $X,C,T,\phi,b,r,\delta,(\alpha_n),(x_n)$ be as in Corollary 
\ref{VBcor1}. Let $\rho$ be a common Cauchy rate for the $T$-Browder 
iteration $(y_n(u))$ for all $u\in C$ used as anchor points. 
Then 
\[ \Psi[b,d,\rho](\varepsilon):=
\rho\left(\frac{\varepsilon\delta^2}{8}\right) \] 
is a Cauchy rate for $(x_n).$
\end{corollary}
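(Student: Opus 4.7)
The plan is to \emph{reduce} Corollary \ref{VB-rate} to the already-proved Corollary \ref{VBcor1} by converting the assumed Cauchy rate $\rho$ into a monotone function $\theta_b$ satisfying \eqref{Bcondition} which happens to be independent of the counterfunction $f$. By Proposition \ref{prop-rate}, a Cauchy rate is in particular a metastability rate (constant in $f$), so the natural candidate is
\[ \theta_b(\varepsilon,f,N):=\max\{N,\rho(\varepsilon)\}. \]
Indeed, for any $u\in C$, picking $n:=\max\{N,\rho(\varepsilon)\}$ puts $n$ inside $[N,\theta_b(\varepsilon,f,N)]$, and any $i,j\in[n,f(n)]$ exceed $\rho(\varepsilon)$, so $d(y_i(u),y_j(u))\le\varepsilon$. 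Moreover $\theta_b$ is trivially nondecreasing in $N$.

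Next, I would feed this $\theta_b$ into Corollary \ref{VBcor1}. The key observation is that since $\theta_b(\varepsilon,f,N)$ does not actually depend on $f$, the auxiliary counterfunctions $f_0,\dots,f_M$ constructed in Corollary \ref{VBcor1} play no role, and the nested recursion collapses. Concretely, with $\varepsilon_0=\varepsilon\delta^2/8$ as in Corollary \ref{VBcor1}, one gets $\Psi_0=0$, and by induction on $m$
\[ \Psi_{m+1}=\theta_b(\varepsilon_0,f_{M-m},\Psi_m)=\max\{\Psi_m,\rho(\varepsilon_0)\}=\rho(\varepsilon_0), \]
so $\Psi_M=\rho(\varepsilon_0)$ and hence $\Psi(\varepsilon,f)=\theta_b(\varepsilon_0,f,\Psi_M)=\rho(\varepsilon_0)=\rho(\varepsilon\delta^2/8)$, independent of $f$.

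Finally, Corollary \ref{VBcor1} yields that $\Psi(\varepsilon,f)=\rho(\varepsilon\delta^2/8)$ is a rate of metastability for $(x_n)$; since this bound does not depend on $f$, the "only if" direction of Proposition \ref{prop-rate} converts it back into a Cauchy rate, giving exactly $\Psi[b,\delta,\rho](\varepsilon)=\rho(\varepsilon\delta^2/8)$. No serious obstacle is expected, the only point requiring care being the verification that the $\Psi_m$-recursion really does degenerate in the $f$-free case — which is immediate once one writes it out.
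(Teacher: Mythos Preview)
Your proposal is correct and follows essentially the same approach as the paper's own proof: define $\theta_b(\varepsilon,f,N):=\max\{N,\rho(\varepsilon)\}$, invoke Corollary~\ref{VBcor1}, observe that the recursion for $\Psi_m$ collapses to $\rho(\varepsilon_0)$, and apply Proposition~\ref{prop-rate} to recover a Cauchy rate from the $f$-independent metastability bound. The only cosmetic difference is that the paper phrases the collapse as ``an easy induction shows $\Psi_m\le\rho(\varepsilon_0)$'' (which also covers the edge case $M=0$ where $\Psi_M=0$), whereas you compute $\Psi_{m+1}=\rho(\varepsilon_0)$ directly; either way the final value $\theta_b(\varepsilon_0,f,\Psi_M)=\rho(\varepsilon_0)$ is the same.
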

\begin{proof}
By Proposition \ref{prop-rate}, $\theta_b(\varepsilon,f,N):=\rho'(\varepsilon,N)
:=\max\{ N,\rho(\varepsilon)\}$ 
is a monotone function satisfying \eqref{Bcondition}. 
Hence by Corollary \ref{VBcor1}
\[ \Psi[b,d,\rho](\varepsilon):=\rho'(\varepsilon_0,\Psi_M),\] 
where 
\[ \Psi_0=0 \text{ and } \Psi_{m+1}=\rho'\left(\varepsilon_0, \Psi_m\right), \,
\mbox{with}\ 
	\varepsilon_0=\frac{\varepsilon\delta^2}{8}, \, M=\lceil\log_{1-\delta}\left(\frac{\varepsilon\delta}{8b}\right)\rceil, \]
is a rate of metastability for $(x_n)$ which does not depend on $f$ and 
hence - again by Proposition \ref{prop-rate} - is a rate of convergence. An easy induction shows 
that $\Psi_m\le \rho(\eps_0)$ for all $m$ and so 
$\rho'(\eps_0,\Psi_M)=\rho(\eps_0).$
\end{proof}

The Browder type sequence has been the focus of many quantitative studies using techniques from proof mining. In \cite{Kohlenbach(11)}, the first author analysed Browder's fixed point theorem in Hilbert spaces showing that it was possible to avoid sequential weak compactness used in the original proof by Browder. A simpler proof of Browder's result due to Halpern in \cite{Halpern(67)} that already avoids weak compactness, was also analyzed and a metastability bound was obtained. Kirk~\cite{Kirk(03)} showed that Halpern's proof can be extended to $\CAT$ spaces (essentially unchanged), and thus the same holds true for its quantitative version \cite[Prop.9.3]{KohlenbachLeustean(12)}. By Theorem~\ref{SBtheo}, using the rates of metastability extracted in \cite{KohlenbachLeustean(12)}, we obtain rates of metastability for the viscosity-Browder type sequences in the setting of $\CAT$ spaces. In \cite{Koernlein(diss)} and \cite[Theorem 6.7]{Koernlein(descent)}, for the special case where $X$ is a Hilbert space, similar rates of metastability have already been extracted - again using methods of proof mining - from a direct proof of the strong convergence of $(x_n)$ 
(i.e. without a reduction to Browder's theorem) from \cite{Yamada(01)}.

In \cite{KohlenbachSipos(ta)}, a quantitative treatment of Reich's extension of Browder fixed point theorem to uniformly smooth Banach spaces was carried out under the additional hypothesis that the space is uniformly convex. Using the rate of metastability extracted in \cite{KohlenbachSipos(ta)}, once again by an application of Theorem~\ref{SBtheo}, we obtain a rate of metastability for the viscosity-Browder type sequence in Banach spaces that are both uniformly convex and uniformly smooth.

\subsection{Halpern-viscosity}
In this section we establish a quantitative version of Theorem 8 in 
\cite{Suzuki(07)}.

\begin{theorem}\label{SHtheo}
	Let $C$ be a nonempty bounded convex subset of a hyperbolic space and let $b\in \N^*$ be a bound on the diameter of $C$. Consider $(S_n)$ a family of nonexpansive maps on $C$, a MKC $\phi:C\to C$  with Rakotch-modulus $\delta$, and a sequence $(\alpha_n)\subset [0,1]$ satisfying $\sum \alpha_n=\infty$ with a monotone rate of divergence $A$. Let $\theta'_b$ be a monotone function satisfying \eqref{HSncondition} and $\sigma_1$ be as in Lemma \ref{Hlem1}. 
Define 
	\begin{equation*}
	\Psi(\varepsilon, f,N)=\Psi[b, \delta, A, \theta'_b](\varepsilon, f,N)=\sigma_1[\widetilde{A}_{\varepsilon}, b]\left(\frac{\varepsilon}{3}, \Psi_{M+1}\right),
	\end{equation*}
	where
	\begin{equation*}
	\begin{gathered}
	\widetilde{A}_{\varepsilon}(k):=A\left(\left\lceil \frac{k}{\delta(\varepsilon/3)}\right\rceil\right) \text{ for all } k\in \N\\
	\Psi_0=\max\{ N,A(1)+1\} \text{ and } \Psi_{m+1}=\theta_b'\left(\varepsilon_0, f_{M-m}, \Psi_m\right), \text{ with }\\
	\tilde{\varepsilon}=\frac{2\varepsilon\delta(\varepsilon/3)}{15}, \, \varepsilon_0=\frac{\tilde{\varepsilon}\delta(\tilde{\varepsilon})}{4}, \, M=\left\lceil\log_{1-\frac{\delta(\tilde{\varepsilon})}{2}}\left(\frac{\tilde{\varepsilon}}{2b}\right)\right\rceil\, \text{ and}\\
	\text{for all } p\in\N,\, \begin{cases}
	f_0(p)=\max\left\{ f\left(\sigma_1[\widetilde{A}_{\varepsilon}, b](\frac{\varepsilon}{3}, p)\right),\sigma_1[\widetilde{A}_{\varepsilon}, b](\frac{\varepsilon}{3}, p)\right\},\\
	f_{m+1}(p)=\max\{f_0(p),\,\theta_b'(\varepsilon_0, f_m, p)\} & \text{for } m<M.
	\end{cases}
	\end{gathered}
	\end{equation*} 
Then for $(x_n)$ satisfying \eqref{VSnH} we have 
\[ \begin{array}{l} \forall N\in\N \,\forall \varepsilon >0\, \forall f:\N\to\N\,
\exists n\in [N,\Psi(\varepsilon, f,N)]\,\exists k\in [N,n], z\in C\, 
\\[1mm] \hspace*{5mm} \left( d(z,w_k(\phi(z)))  \le \varepsilon \wedge 
\forall i\in [n,f(n)]\, (d(x_i,z)\le \frac{\varepsilon}{2})\right). \end{array} \]
In particular, $\Psi(\varepsilon, f,0)$ is a rate of metastability for $(x_n).$
\end{theorem}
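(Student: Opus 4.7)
The proof parallels that of Theorem \ref{SBtheo} with two adaptations: (i) I use the Halpern metastability condition \eqref{HSncondition} in place of \eqref{BSncondition}, and (ii) since — unlike the implicit Browder sequence — the Halpern comparison sequence $w_n(\phi(z))$ does not start at $x_0$, I replace the corresponding Browder step with an application of Lemma \ref{Hlem1_restricted} (the quantitative Xu lemma) to control $d(x_i, w_i(\phi(z_{m_0})))$.

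For \emph{Phase 1} (construction of $(n_m, z_m)_{m \le M+1}$), set $z_0 := x_0$ and $n_0 := \max\{N, A(1)+1\}$; the shift $A(1)+1$ guarantees $\sum_{i<n_0}\alpha_i \ge 1$, leaving the necessary slack for the later Xu-lemma step. Write $w^m_i := w_i(\phi(z_m))$ and use \eqref{HSncondition} to pick $n_{m+1} \in [n_m, \theta'_b(\varepsilon_0, f_{M-m}, n_m)]$ with $d(w^m_i, w^m_{n_{m+1}}) \le \varepsilon_0$ on $[n_{m+1}, f_{M-m}(n_{m+1})]$; set $z_{m+1} := w^m_{n_{m+1}}$. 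The nested definition of $(f_m)$ ensures the metastability intervals at successive levels are compatible. The Halpern analogue of the Browder pointwise estimate, $d(w_n(u_1), w_n(u_2)) \le d(u_1, u_2)$ (proved by induction starting from $w_0(u) = u$), combined with the metastability bound on $w^{m-1}$, yields
\[ d(z_{m+1}, z_m) \le d(\phi(z_m), \phi(z_{m-1})) + \varepsilon_0. \]
Whenever $d(z_m, z_{m-1}) > \tilde\varepsilon$, the Rakotch modulus together with the choice $\varepsilon_0 = \tilde\varepsilon\delta(\tilde\varepsilon)/4$ absorbs the additive error into a cleaner contraction $d(z_{m+1}, z_m) \le (1-\delta(\tilde\varepsilon)/2)\,d(z_m, z_{m-1})$, so the definition of $M$ forces some $m_0 \le M$ with $d(z_{m_0+1}, z_{m_0}) \le \tilde\varepsilon$.

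For \emph{Phase 2} (Xu's lemma), the defining recursions, $(W1)$--$(W4)$, and nonexpansivity of $S_i$ and $\phi$ give
\[ d(x_{i+1}, w^{m_0}_{i+1}) \le (1-\alpha_i)\,d(x_i, w^{m_0}_i) + \alpha_i\,d(\phi(x_i), \phi(z_{m_0})). \]
Working by contradiction, suppose $d(x_i, z_{m_0+1}) > \varepsilon/2$ for some $i$ in the final interval. Since $\tilde\varepsilon = 2\varepsilon\delta(\varepsilon/3)/15 < \varepsilon/6$, the triangle inequality $d(x_i, z_{m_0}) \ge d(x_i, z_{m_0+1}) - d(z_{m_0+1}, z_{m_0}) > \varepsilon/2 - \tilde\varepsilon$ yields $d(x_i, z_{m_0}) > \varepsilon/3$, so the Rakotch modulus gives $d(\phi(x_i), \phi(z_{m_0})) \le (1-\delta(\varepsilon/3))\,d(x_i, z_{m_0})$. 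This turns the above into Xu form with effective contraction rate $\alpha_i\delta(\varepsilon/3)$ and error term controlled by $\varepsilon_0$ (from the metastability bound $d(w^{m_0}_i, z_{m_0+1}) \le \varepsilon_0$); applying Lemma \ref{Hlem1_restricted} with the modified divergence rate $\widetilde{A}_\varepsilon(k) = A(\lceil k/\delta(\varepsilon/3)\rceil)$ (which compensates for the factor $\delta(\varepsilon/3)$ lost to Rakotch) then produces a contradiction. Hence $d(x_i, z_{m_0+1}) \le \varepsilon/2$ on the interval; setting $z := z_{m_0+1}$, $k := n_{m_0+1}$, and $n := \sigma_1[\widetilde{A}_\varepsilon, b](\varepsilon/3, n_{m_0+1})$ concludes the proof, with $d(z, w_k(\phi(z))) = d(w_k(\phi(z_{m_0})), w_k(\phi(z))) \le d(z_{m_0}, z) \le \tilde\varepsilon \le \varepsilon$ as in Remark \ref{finitization1}.

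The main technical obstacle is the precise arithmetic calibration of $\tilde\varepsilon$, $\varepsilon_0$, and the thresholds $\varepsilon/3$, $\varepsilon/2$, so that the triangle chain in Phase 2 reliably converts $d(x_i, z_{m_0+1}) > \varepsilon/2$ into the contradiction-inducing $d(x_i, z_{m_0}) > \varepsilon/3$; moreover, the nested construction of the auxiliary functions $f_m$ must simultaneously guarantee that the final interval is long enough both for Phase 1's pigeonhole selection and for the universal quantifier $\forall i \in [N,p]$ in the hypothesis of Lemma \ref{Hlem1_restricted}.
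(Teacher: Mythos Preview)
Your Phase~1 is correct and in fact cleaner than the paper's. The paper obtains $d(w^m_{n_{m+1}}, w^{m-1}_{n_{m+1}}) \le (1-\delta(\tilde\varepsilon)/2)\,d(z_m, z_{m-1})$ via an induction on $n$ that uses $\prod_{i<n_m}(1-\alpha_i) \le 1/2$ --- this is the actual purpose of the shift $n_0 \ge A(1)+1$, not the later Xu-lemma step as you state --- whereas your route through the plain nonexpansivity $d(w_n(u), w_n(v)) \le d(u,v)$ followed by absorbing $\varepsilon_0$ into the contraction factor reaches the same bound more directly.

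Your Phase~2, however, has a genuine gap. Assuming $d(x_i, z) > \varepsilon/2$ for \emph{some} $i$ does not set up Lemma~\ref{Hlem1_restricted}: that lemma needs the Xu recursion with small error term to hold on an entire interval, and the Rakotch contraction is only available at indices where the relevant lower bound holds. The paper's argument has two separate steps which your sketch conflates. First, assume $d(x_i, w_i) > \varepsilon/3$ for \emph{all} $i \in [n_{m_0+1}, \sigma_1]$; applying Rakotch to $d(\phi(x_i), \phi(w_i))$ (rather than to $d(\phi(x_i), \phi(z_{m_0}))$, which does not match the iterated quantity $d(x_i,w_i)$) puts the recursion for $d(x_i, w_i)$ into Xu form on the whole interval, and Lemma~\ref{Hlem1_restricted} yields the contradiction $d(x_{\sigma_1}, w_{\sigma_1}) \le \varepsilon/3$, producing some $i_0 \le \sigma_1$ with $d(x_{i_0}, w_{i_0}) \le \varepsilon/3$. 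Second --- and this step is entirely missing from your sketch --- a separate forward induction using $d(\phi(u), \phi(v)) \le \max\{(1-\delta(\varepsilon/3))\,d(u,v),\, \varepsilon/3\}$ shows that $d(x_i, w_i) \le \varepsilon/3$ persists for all $i \in [i_0, f_{M-m_0}(n_{m_0+1})] \supseteq [\sigma_1, f(\sigma_1)]$. Only after this persistence step does $d(x_i, z) \le d(x_i, w_i) + d(w_i, z) \le \varepsilon/3 + \varepsilon_0 \le \varepsilon/2$ follow on the final interval.
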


\begin{proof}
	Let $N\in \N$, $\varepsilon >0$ and $f:\N\to\N$ be given. From the definition of the natural number $M$, we have $\left(1-\dfrac{\delta(\tilde{\varepsilon})}{2}\right)^M\leq \dfrac{\tilde{\varepsilon}}{2b}$. Consider the functions $f_0, \cdots, f_M$ as in the theorem.
	Define $z_0=x_0$ and $n_0=\max\{ N,A(1)+1\}$. Notice that $$\prod_{i=0}^{n_0-1}(1-\alpha_i)\leq \exp(-\sum_{i=0}^{n_0-1}\alpha_i)\leq \frac{1}{2}.$$
	For all $m\leq M$, assume that $z_m$ and $n_m$ are already defined and let $(w_n^m)_n$ denote the $(S_n)$-Halpern sequence with anchor and starting point $\phi(z_m)$, i.e. $w_n^m=w_n(\phi(z_m))$. By \eqref{HSncondition}, we consider $n_{m+1}\in[n_m, \theta_b'(\varepsilon_0, f_{M-m}, n_m)]$ such that
	\[\forall i\in[n_{m+1}, f_{M-m}(n_{m+1})]\, \left( d(w_i^m,w^m_{n_{m+1}})\leq \varepsilon_0\right),\]
	and define $z_{m+1}=w^m_{n_{m+1}}$.
	
	Notice that for all $m\leq M$, since $n_m\geq n_0\geq N$, we have $$\prod_{i=0}^{n_m-1}(1-\alpha_i)\leq \frac{1}{2}.$$
	Moreover, for $m\leq M-1$, $[n_{m+1}, \theta_b'(\varepsilon_0, f_{M-m-1}, n_{m+1})] \subseteq [n_{m+1}, f_{M-m}(n_{m+1})]$ and so $d(w^m_{n_{m+2}},z_{m+1})=d(w^m_{n_{m+2}},w^m_{n_{m+1}})\leq \varepsilon_0$.
	
	We start by showing that there is $m_0\leq M$ such that $d(z_{m_0+1},z_{m_0})\leq \tilde{\varepsilon}$. Assume that for all $m\leq M-1$, $d(z_{m+1},z_m)>\tilde{\varepsilon}$. For $m\in[1, M]$,
	\begin{equation*}
	d(z_{m+1},z_m)\leq d(w^m_{n_{m+1}},w^{m-1}_{n_{m+1}})+d(w^{m-1}_{n_{m+1}},w^{m-1}_{n_m})\leq d(w^m_{n_{m+1}},w^{m-1}_{n_{m+1}})+\varepsilon_0,
	\end{equation*}
	and by induction on $n$, we have (using $(W4)$ and the nonexpansivity of $S_n$) 
	\begin{equation*}
	d(w^m_n,w^{m-1}_n)\leq \left((1-\delta(\tilde{\varepsilon}))+\delta(\tilde{\varepsilon})\prod_{i=0}^{n-1}(1-\alpha_i)\right)d(z_m,z_{m-1}).
	\end{equation*}
	Then for $m\in[1, M]$, $d(w^m_{n_{m+1}},w^{m-1}_{n_{m+1}})\leq \left(1-\frac{\delta(\tilde{\varepsilon})}{2}\right)d(z_m,z_{m-1})$. Hence,
	\begin{align*}
	d(z_{m+1},z_m)& \leq \left(1-\frac{\delta(\tilde{\varepsilon})}{2}\right)d(z_m,z_{m-1}) +\varepsilon_0\\
	&\leq \left(1-\frac{\delta(\tilde{\varepsilon})}{2}\right)^m d (z_1,z_0) + \varepsilon_0\sum_{i=0}^{m-1} \left(1-\frac{\delta(\tilde{\varepsilon})}{2}\right)^i.
	\end{align*}
	We conclude,
	\begin{equation*}
	d(z_{M+1},z_M)\leq \left(1-\frac{\delta(\tilde{\varepsilon})}{2}\right)^Mb+\frac{2\varepsilon_0}{\delta(\tilde{\varepsilon})}\leq \frac{\tilde{\varepsilon}}{2b}b+\frac{\tilde{\varepsilon}}{2}=\tilde{\varepsilon}.
	\end{equation*}
	Thus, we can take $m_0\leq M$ such that $d(z_{m_0+1},z_{m_0})\leq \tilde{\varepsilon}$. Write $w_n\equiv w^{m_0}_n$, $z=z_{m_0+1}=w^{m_0}_{n_{m_0+1}}$ and $\sigma_1=\sigma_1[\widetilde{A}_{\varepsilon}, b](\frac{\varepsilon}{3}, n_{m_0+1})\ge 
n_{m_0+1}$. Assume towards a contradiction that
	\begin{equation*}
	\forall i\in[n_{m_0+1}, \sigma_1]\, \left(d(x_i,w_i) >\frac{\varepsilon}{3}\right).
	\end{equation*}
	For all $i\in [n_{m_0+1}, \sigma_1]$, by (W4)
	\begin{align*}
	d(&x_{i+1},w_{i+1})\leq  (1-\alpha_i)d(S_i(x_i),S_i(w_i))+\alpha_id(\phi(x_i),\phi(z_{m_0}))\\
	&\leq (1-\alpha_i)d(x_i,w_i)+\alpha_i\left(d(\phi(x_i),\phi(w_i))+d(\phi(w_i),\phi(z))+d(\phi(z),\phi(z_{m_0}))\right)\\
	&\leq \left(1-\delta\left(\varepsilon/3\right)\alpha_i\right)d(x_i,w_i)+\delta\left(\varepsilon/3\right)\alpha_i\frac{d(w_i,z)+d(z,z_{m_0})}{\delta(\varepsilon/3)}\\
	&\leq (1-\widetilde{\alpha}_i)d(x_i,w_i)+\tilde{\alpha}_ib_i,
	\end{align*}
	where $\tilde{\alpha}_i=\delta(\varepsilon/3)\alpha_i$ and $b_i=\dfrac{d(w_i,z)+d(z,z_{m_0})}{\delta(\varepsilon/3)}$.
	
	Using the monotonicity of the function $\theta_b'$ it is easy to see that $n_{m}\leq \Psi_{m}$, for all $m\leq M+1$. Since $A$ is a rate of divergence for $\sum \alpha_n=\infty$, we have that $\widetilde{A}_{\varepsilon}$ is a rate of divergence for $\sum \delta(\varepsilon/3)\alpha_n=\infty$. Furthermore, by the monotonicity of the function $A$ it follows that for all $\varepsilon >0$, the function $\sigma_1[\widetilde{A}_{\varepsilon}, b]$ is also monotone. Thus from $n_{m_0+1}\leq n_{M+1}$, we have $\sigma_1\leq \Psi(\varepsilon,f,N)$. By the definition of the functions $f_m$, it follows $\sigma_1\leq f_{M-m_0}(n_{m_0+1})$. Hence
	\begin{equation*}
	\forall i\in[n_{m_0+1}, \sigma_1]\, \left(d(w_i,z)\leq \varepsilon_0\leq \frac{\tilde{\varepsilon}}{4}\right),
	\end{equation*}
	and so,
	\begin{equation*}
	b_i\leq \frac{\tilde{\varepsilon}/4+\tilde{\varepsilon}}{\delta(\varepsilon/3)}= \frac{\varepsilon}{6}=\frac{(\varepsilon/3)}{2}.
	\end{equation*}
	By Lemma~\ref{Hlem1_restricted}, we have that $d(x_{\sigma_1},w_{\sigma_1})\leq \frac{\varepsilon}{3}$, which contradicts the assumption. Consider $i_0$ such that
	\begin{equation*}
	i_0\in[n_{m_0+1}, \sigma_1] \text{ and } d(x_{i_0},w_{i_0})\leq \frac{\varepsilon}{3}.
	\end{equation*}
	Note that for all $u, v\in C$, it holds $d(\phi(u),\phi(v))\leq \max\{\left(1-\delta(\varepsilon/3)\right)d(u,v), \varepsilon/3\}$.
	For any $i\in [i_0, f_{M-m_0}(n_{m_0+1})-1]$ such that $d(x_i,w_i)\leq \frac{\varepsilon}{3}$, by (W4) and the nonexpansivity of $S_i$, we have
	\begin{align*}
	&d(x_{i+1},w_{i+1})\leq (1-\alpha_i)d(x_i,w_i)+\alpha_id(\phi(x_i),\phi(z_{m_0}))\\
	&\leq (1-\alpha_i)d(x_i,w_i)+\alpha_i\max\left\{\left(1-\delta\left(\frac{\varepsilon}{3}\right)\right)d(x_i,z_{m_0}),\, \frac{\varepsilon}{3})\right\}\\
	&= \max\left\{(1-\alpha_i)d(x_i,w_i)+\alpha_i\left(1-\delta\left(\frac{\varepsilon}{3}\right)\right)d(x_i,z_{m_0}),\, (1-\alpha_i)d(x_i,w_i)+\alpha_i\frac{\varepsilon}{3}\right\}\\
	&\leq \max\left\{\left(1-\delta\left(\frac{\varepsilon}{3}\right)\alpha_i\right)d(x_i,w_i)+\alpha_i\delta\left(\frac{\varepsilon}{3}\right)\left(1-\delta\left(\frac{\varepsilon}{3}\right)\right)\frac{d(w_i,z)+d(z,z_{m_0})}{\delta(\frac{\varepsilon}{3})},\right.\\
	&\left.\qquad\qquad(1-\alpha_i)d(x_i,w_i)+\alpha_i\frac{\varepsilon}{3}\right\}\\
	&\leq \max\{(1-\tilde{\alpha}_i)\frac{\varepsilon}{3} + \tilde{\alpha}_i\frac{\varepsilon}{3},\, (1-\alpha_i)\frac{\varepsilon}{3}+\alpha_i\frac{\varepsilon}{3}\}=\frac{\varepsilon}{3},
	\end{align*}
	using the inequalities $d(x_i,w_i)\leq \dfrac{\varepsilon}{3}$, $1-\delta(\varepsilon/3)\leq 1$ and $$\frac{d(w_i,z)+d(z,z_{m_0})}{\delta(\varepsilon/3)}\leq \dfrac{\varepsilon_0+\tilde{\varepsilon}}{\delta(\varepsilon/3)}\leq \dfrac{\varepsilon}{3}.$$
	
	Hence, $d(x_i,w_i)\leq \dfrac{\varepsilon}{3}$ for all $i\in [i_0, f_{M-m_0}(n_{m_0+1})]$ and in particular, the inequality holds for $i\in [\sigma_1, f(\sigma_1)]$. The theorem is now satisfied with $n:=\sigma_1$, $k:=n_{m_0+1}$ and $z$ as defined above. Indeed, since $d(z,z_{m_0})=d(z_{m_0+1},z_{m_0})\le \varepsilon$ and $[n, f(n)]\subseteq [n_{m_0+1}, f_{M-m_0}(n_{m_0+1})]$, using (W4) and the nonexpansivity of $S_{n_{m_0+1}}$ and $\phi$, we have 
\[ d(z,w_k(\phi(z)))=d(w_{n_{m_0+1}}(\phi(z_{m_0})),w_{n_{m_0+1}}(\phi(z_{m_0+1})))\le \varepsilon \]
and for all $i\in [n,f(n)]$
\[ d(x_i,z)\le d(x_i,w_i)+d(w_i,z)\le 
\frac{\varepsilon}{3}+\varepsilon_0\le \frac{\varepsilon}{2}. \qedhere\]
\end{proof}

\begin{remark} 
As in {\rm Remark~\ref{finitization1}}, it follows from 
{\rm Theorem \ref{SHtheo}} in an elementary way that (for $X$ being complete and $C$ being 
closed) the limit $x:=\lim x_n$ is the unique fixed point of $P\circ\phi,$ 
where $P(u):=\lim w_n(u).$
\end{remark} 

\begin{remark}\label{unboundedness2}
	As in {\rm Remark~\ref{unboundedness}}, instead of assuming that $C$ is bounded and working with a bound on its diameter, we can adapt {\rm Theorem~\ref{SHtheo}} to the situation where we only have $b\in\N^*$ satisfying $b\geq \max\{d(x_0, w_n(\phi(x_0))) : n\in \N\}$. We also require bounding information on the initial displacement of the relevant mappings w.r.t. $x_0$, i.e. for each $n\in\N$ let $c_n\in\N$ be such that
	\[
	c_n\geq \max\{ d(x_0, \phi(x_0)), d(x_0, S_n(x_0))\}.
	\]
	In this case, the bound takes the form $\Psi(\eps, f, N)=\sigma_1[\widetilde{A}_{\eps}, b']\left( \frac{\eps}{3}, \Psi_{M+1} \right)$, where $\tilde{\varepsilon}, \eps_0, \widetilde{A}_{\eps}$ and $M$ are as before,
	\[
	\Psi_0=\max\{N, A(1)+1\} \text{ and } \Psi_{m+1}=\theta'_{(m+1)b}\left(\varepsilon_0, f_{M-m},
	\Psi_m\right),
	\]
	$$b'=(M+1)\cdot b+\sum_{k=0}^{\Psi_{M+1}-1}c_k,$$ and for all $p\in\N$,
	\begin{equation*}
		\begin{cases}
			f_0(p)=\max\{f\left(\sigma_1[\widetilde{A}_{\eps}, b'](\frac{\eps}{3},p)\right), \sigma_1[\widetilde{A}_{\eps}, b'](\frac{\eps}{3},p)\},\\
			f_{m+1}(p)=\max\left\{f_0(p),\, \theta'_{(M-m+1)b}\left(\varepsilon_0, f_m, p\right)\right\} & \text{for } m<M.
		\end{cases}
	\end{equation*}
	We can define $n_m$ and $z_m$ in a way similar to {\rm Remark~\ref{unboundedness}}. Since it still holds that $\left(1-\delta(\tilde{\varepsilon})/2\right)^Md(z_1, z_0)\leq \tilde{\varepsilon}/2$, we can conclude the existence of $z_{m_0}$ satisfying $d(z_{m_0+1},z_{m_0})\leq \tilde{\varepsilon}$. The next argument requires an application of {\rm Lemma~\ref{Hlem1_restricted}}. Let $w_n\equiv w_n^{m_0}$ be as before, write $\sigma_1=\sigma_1[\tilde{A}_{\eps}, b'](\frac{\eps}{3}, n_{m_0+1})$ and assume that 
	\[
	\forall i\in [n_{m_0+1}, \sigma_1]\, \left(d(x_i, w_i) > \frac{\eps}{3}\right).
	\]
	To apply {\rm Lemma~\ref{Hlem1_restricted}}, it is enough to see that for all $i\in [n_{m_0+1}, \sigma_1+1]$, we have $d(x_i,w_i)\leq b'$. First, inductively it holds $d(x_i,w_i)\leq d(x_{n_{m_0+1}},w_{n_{m_0+1}})$, for $i\in [n_{m_0+1}, \sigma_1+1]$. The base case is trivial. In the induction step we argue as before: For $i\in [n_{m_0+1}, \sigma_1]$, we have
	\[
	\begin{split}
	d(x_{i+1},w_{i+1})&\leq (1-\widetilde{\alpha}_i)d(x_i,w_i)+ \widetilde{\alpha}_i b_i\\
	&\leq (1-\widetilde{\alpha}_i)d(x_i,w_i)+ \widetilde{\alpha}_id(x_{n_{m_0+1}},w_{n_{m_0+1}})\\
	&\leq d(x_{n_{m_0+1}},w_{n_{m_0+1}}),
	\end{split}
	\]
	using the induction hypothesis and $b_i\leq \eps/6< \eps/3<d(x_{n_{m_0+1}},w_{n_{m_0+1}})$.
	Now, by induction, one shows $d(x_0, x_n)\leq \sum_{k=0}^{n-1}c_k$, for all $n\in\N$. Then, from $n_{m_0+1}\leq n_{M+1}\leq \Psi_{M+1}$ and $d(x_0,z_{m_0+1})\leq (m_0+1)b\leq (M+1)b$, we conclude
	\begin{equation*}
	d(x_{n_{m_0+1}},w_{n_{m_0+1}})\leq d(x_0, x_{n_{m_0+1}}) + d(x_0, z_{m_0+1}) \leq b'.
	\end{equation*}
	The remainder of the proof follows unchanged.
	\end{remark}

An application of Theorem~\ref{SHtheo} gives a rate of metastability for the Halpern-viscosity iteration from a function $\theta'_b$ satisfying \eqref{Hcondition}.
\begin{corollary}\label{VHcor1}
	Let $C$ be a nonempty bounded convex subset of a hyperbolic space and let $b\in \N^*$ be a bound on the diameter of $C$. Consider $T$ a nonexpansive map on $C$, an $r$-contraction $\phi:C\to C$  with  $\delta\in(0,1)$ such that $r\leq 1-\delta$, and a sequence $(\alpha_n)\subset [0,1]$ satisfying $\sum \alpha_n=\infty$ with a monotone rate of divergence $A$. Let $\theta'_b$ be a monotone function satisfying \eqref{Hcondition} and $\sigma_1$ be as in Lemma \ref{Hlem1}. 
Then $(x_n)$ defined by
	\begin{equation*}
	x_0\in C, \qquad x_{n+1}=(1-\alpha_n)T(x_n)\oplus \alpha_n\phi(x_n), \, n\in\N,
	\end{equation*}
	 is a Cauchy sequence with metastability rate
	 \begin{equation*}
	 \Psi(\varepsilon, f)=\Psi[b, \delta, A, \theta'_b](\varepsilon, f)=\sigma_1[\widetilde{A}, b]\left(\frac{\varepsilon}{3}, \Psi_{M+1}\right)
	 \end{equation*}
	 where
	\begin{equation*}
	\begin{gathered}
	\widetilde{A}(k):=A\left(\left\lceil \frac{k}{\delta}\right\rceil\right) \text{ for all } k\in \N\\
	\Psi_0=A(1)+1 \text{ and } \Psi_{m+1}=\theta_b'\left(\varepsilon_0, f_{M-m}, \Psi_m\right), \text{ with }\\
	\varepsilon_0=\frac{\varepsilon\delta^2}{30}, \, M=\left\lceil\log_{1-\frac{\delta}{2}}\left(\frac{\varepsilon\delta}{15b}\right)\right\rceil\, \text{ and}\\
	\text{for all } p\in\N,\, \begin{cases}
	f_0(p)=f\left(\sigma_1[\widetilde{A}, b](\frac{\varepsilon}{3}, p)\right),\\
	f_{m+1}(p)=\max\{f_0(p),\,\theta_b'(\varepsilon_0, f_m, p)\} & \text{for } m<M.
	\end{cases}
	\end{gathered}
	\end{equation*}
\end{corollary}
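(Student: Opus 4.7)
The plan is to derive this corollary as a direct specialization of Theorem~\ref{SHtheo}. Two observations make the reduction immediate. First, since $(S_n)$ is taken to be the constant family $S_n \equiv T$, the $(S_n)$-Halpern iteration defined by \eqref{SnH} coincides with the ordinary Halpern iteration $w_n(u)$, and the quantitative stability condition \eqref{HSncondition} becomes exactly \eqref{Hcondition}. Second, an $r$-contraction $\phi$ with $r \leq 1-\delta$ is in particular an MKC with constant Rakotch-modulus $\delta(\varepsilon)\equiv \delta \in (0,1)$, as noted in the preliminaries just after the introduction of Rakotch-moduli. Thus the hypotheses of Theorem~\ref{SHtheo} are all satisfied with these choices, and it only remains to track how the parameters appearing in the conclusion of Theorem~\ref{SHtheo} simplify under the substitution $\delta(\cdot)\equiv\delta$.

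The next step is to carry out the substitution in the definition of $\Psi$ from Theorem~\ref{SHtheo}. Setting $\delta(\varepsilon/3) = \delta$ in $\widetilde{A}_{\varepsilon}$ yields precisely $\widetilde{A}(k) = A(\lceil k/\delta\rceil)$, which does not depend on $\varepsilon$. The auxiliary constants become $\tilde{\varepsilon} = \frac{2\varepsilon\delta}{15}$ and $\varepsilon_0 = \frac{\tilde{\varepsilon}\cdot\delta}{4} = \frac{\varepsilon\delta^2}{30}$, while $M = \lceil\log_{1-\delta/2}(\tilde{\varepsilon}/(2b))\rceil = \lceil\log_{1-\delta/2}(\varepsilon\delta/(15b))\rceil$. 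These are exactly the values stated in the corollary.

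Because we are asserting a (plain) rate of metastability $\Psi(\varepsilon,f)$ rather than the more general $\Psi(\varepsilon,f,N)$, we simply instantiate Theorem~\ref{SHtheo} with $N=0$, so that $\Psi_0 = \max\{0, A(1)+1\} = A(1)+1$, matching the corollary. The recursion $\Psi_{m+1} = \theta'_b(\varepsilon_0, f_{M-m}, \Psi_m)$ and the outer wrapping $\Psi(\varepsilon,f) = \sigma_1[\widetilde{A}, b](\varepsilon/3, \Psi_{M+1})$ then follow verbatim from the theorem. The only cosmetic difference is in the definition of $f_0$: the theorem uses $\max\{f(\sigma_1[\widetilde{A}, b](\varepsilon/3, p)),\, \sigma_1[\widetilde{A}, b](\varepsilon/3, p)\}$, whereas the corollary writes $f(\sigma_1[\widetilde{A}, b](\varepsilon/3, p))$; since one can always replace $f$ by $\max\{f(\cdot),\mathrm{id}\}$ without loss of generality when stating a rate of metastability (the metastability property is preserved under replacing $f$ by any pointwise larger function), the two formulations are equivalent.

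I do not anticipate any real obstacle: the work has already been done inside Theorem~\ref{SHtheo}. The main thing to be careful about is the bookkeeping of the constants, in particular verifying that the constant Rakotch-modulus $\delta(\cdot)\equiv\delta$ is legitimate (which follows from the hypothesis $r \leq 1-\delta$ together with the $r$-contractivity of $\phi$) and that the simplifications of $\tilde\varepsilon$, $\varepsilon_0$, $M$, and $\widetilde{A}$ reduce to the expressions in the corollary. No new analytic ingredient is needed.
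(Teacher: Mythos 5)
Your proposal is correct and is essentially the paper's own (implicit) proof: Corollary~\ref{VHcor1} is exactly Theorem~\ref{SHtheo} instantiated with the constant family $S_n\equiv T$, the constant Rakotch-modulus $\delta(\cdot)\equiv\delta$ and $N=0$, and your simplifications of $\tilde{\varepsilon}$, $\varepsilon_0$, $M$ and $\widetilde{A}$ are exactly as intended. Your reading of the dropped $\max\{\cdot\,,\sigma_1\}$ in $f_0$ is also the right one, though strictly your wlog argument gives the stated bound evaluated at $\max\{f,\mathrm{id}\}$, which coincides with the corollary's formula whenever $f(n)\ge n$; for other $f$ one notes that if $\sigma_1>f(\sigma_1)$ the interval $[\sigma_1,f(\sigma_1)]$ is empty, so metastability at $n=\sigma_1$ holds vacuously and the stated bound still works.
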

Just as in the case of Corollary \ref{VB-rate} we obtain the following 
consequence of Corollary \ref{VHcor1}:
\begin{corollary} 
Let $X,C,b,T,\phi,r,\delta,(\alpha_n),A,(x_n), \sigma_1$ be as in 
 Corollary \ref{VHcor1}. Let $\rho(\varepsilon)$ be a common (for all 
anchor points $u\in C$) rate of convergence for the sequence $(w_n(u))$ 
of Halpern iterations of $T.$ Then
	 \begin{equation*}
	 \Psi(\varepsilon, f)=\Psi[b, \delta, A, \rho](\varepsilon, f)=\sigma_1[\widetilde{A}, b]\left(\frac{\varepsilon}{3}, \max\{ \rho(\eps_0),A(1)+1\}\right)
	 \end{equation*}
	 where
	\begin{equation*}
	\widetilde{A}(k):=A\left(\left\lceil \frac{k}{\delta}\right\rceil\right) \text{ for all } k\in \N \text{ and } 
\varepsilon_0=\frac{\varepsilon\delta^2}{30}. 
\end{equation*}
is a Cauchy rate for $(x_n).$
\end{corollary}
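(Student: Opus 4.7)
The plan is to mimic the proof of Corollary \ref{VB-rate} and exploit the fact that a Cauchy rate gives an $f$-independent rate of metastability via Proposition \ref{prop-rate}. First I would set $\theta'_b(\varepsilon,f,N):=\max\{N,\rho(\varepsilon)\}$. By Proposition \ref{prop-rate}, $\rho$ being a Cauchy rate (uniformly in the anchor point $u\in C$) for the Halpern iterations implies that $\theta'_b$ is a monotone function satisfying \eqref{Hcondition}. One can then feed this $\theta'_b$ into Corollary \ref{VHcor1}.

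The key observation is that with this choice of $\theta'_b$, the resulting function does not depend on the second argument, so the inner recursion from Corollary \ref{VHcor1} collapses: one has
\[
\Psi_{m+1}=\theta'_b(\varepsilon_0,f_{M-m},\Psi_m)=\max\{\Psi_m,\rho(\varepsilon_0)\},
\]
independently of the auxiliary functions $f_m$. An easy induction on $m$ starting from $\Psi_0=A(1)+1$ yields
\[
\Psi_m\le\max\{A(1)+1,\rho(\varepsilon_0)\}\quad\text{for all }m\le M+1,
\]
and in particular the bound is attained (as a max) already at $m=1$, so
\[
\Psi_{M+1}=\max\{A(1)+1,\rho(\varepsilon_0)\}.
\]

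Substituting back into the outer expression from Corollary \ref{VHcor1} gives
\[
\Psi(\varepsilon,f)=\sigma_1[\widetilde{A},b]\bigl(\tfrac{\varepsilon}{3},\,\max\{\rho(\varepsilon_0),A(1)+1\}\bigr),
\]
which is exactly the function displayed in the statement. Since the bound $\sigma_1[\widetilde{A},b](\varepsilon/3,\cdot)$ from Lemma \ref{Hlem1} depends only on $\varepsilon$ and its second numerical argument, the resulting rate of metastability $\Psi(\varepsilon,f)$ does not depend on $f$. One final appeal to Proposition \ref{prop-rate} then converts this $f$-independent metastability rate into a Cauchy rate for $(x_n)$, finishing the proof.

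There is essentially no obstacle beyond bookkeeping: the only point requiring care is checking that the parameter $b$ used inside $\sigma_1$ remains the same bound on the diameter of $C$ (it does, since Corollary \ref{VHcor1} already supplies it) and that the auxiliary functions $f_m$ never actually enter the final expression once $\theta'_b$ is $f$-independent. Everything else is the same trick as in the Browder-case corollary.
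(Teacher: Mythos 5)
Your proposal is correct and follows essentially the same route as the paper, which proves this corollary exactly as it proves Corollary \ref{VB-rate}: use Proposition \ref{prop-rate} to turn the common Cauchy rate $\rho$ into the $f$-independent monotone function $\theta'_b(\varepsilon,f,N):=\max\{N,\rho(\varepsilon)\}$ satisfying \eqref{Hcondition}, plug it into Corollary \ref{VHcor1}, note that the recursion collapses to $\Psi_{M+1}=\max\{A(1)+1,\rho(\varepsilon_0)\}$, and apply Proposition \ref{prop-rate} once more to the resulting $f$-independent metastability rate. Your bookkeeping (the induction on $\Psi_m$ and the disappearance of the $f_m$'s) matches the intended argument.
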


Rates of metastability for the Halpern iterations have been extracted before. In the setting of $\CAT$ spaces, under appropriate conditions, rates of metastability were obtained in \cite{KohlenbachLeustean(12)}. The convergence of Halpern iterations was also studied by proof mining methods in the setting of Banach spaces. The results \cite{KohlenbachLeustean(12)i} in and \cite{Koernlein(15)} (under different sets of conditions, both including the natural choice $\alpha_n=\frac{1}{n+1}$) extracted a transformation of a rate of metastability for a certain Browder type sequence into a rate of metastability for the Halpern iteration. Thus, the recent rate obtained in \cite{KohlenbachSipos(ta)} entails a rates of metastability for Halpern iterations in the setting of Banach spaces that are simultaneously uniformly convex and uniformly smooth. Subsequently, by an application of Theorem~\ref{SHtheo}, instantiated with the corresponding rates of metastability for Halpern iterations, we obtain rates of metastability for the viscosity-Halpern iterations in the setting of $\CAT$ spaces and in the setting of Banach spaces that are both uniformly convex and uniformly smooth.

In Hilbert spaces, a different generalization of Wittmann's theorem was that of Bauschke~\cite{Bauschke(96)} to a finite family of nonexpansive maps $T_1, \cdots T_\ell$ satisfying:
\begin{equation}\label{bauschkecondition}\tag{+}
	\bigcap_{i=1}^{\ell}Fix(T_i)=Fix(T_{\ell}\cdots T_1)=\cdots =Fix(T_{\ell-1}\cdots T_1T_{\ell})
\end{equation}

Bauschke's schema is the particular case of \eqref{SnH} when one considers the sequence of nonexpansive maps $(S_n)$ defined cyclically by $S_n:=T_{[n+1]}$, for $n\in\N$,
where $[n]:=n \!\mod \ell$. Similarly to Wittmann's proof, Bauschke's generalization also follows from a sequential weak compactness argument. In \cite{FerreiraLeusteanPinto(19)}, a theoretical approach to eliminate this principle was developed, and subsequently a quantitative version of Bauschke's theorem was obtained. From and application of Theorem~\ref{SHtheo}, we then obtain a rate of metastability for a viscosity version of Bauschke's schema. In \cite{Koernlein(diss),Koernlein(descent)}, K\"ornlein also obtained rates of metastability for the viscosity version of Bauschke's schema. The rates obtained are different, since K\"ornlein's extraction follows from a direct proof (i.e. without a reduction to Bauschke's theorem). In the end, however, both rates look to be of a similar complexity: K\"ornlein still needed to carry out a convoluted quantitative treatment for the existence of (a proxy to) $z=P_F\phi z$ (see \cite[Problem 8.4.4.]{Koernlein(diss)}). K\"ornlein also considered Suzuki's comment on Bauschke's condition \cite{Suzuki(07)i}: without any commutativity assumption, the condition \eqref{bauschkecondition} already follows from having $\bigcap_{i=1}^{\ell}Fix(T_i)=Fix(T_{\ell}\cdots T_1)$. In \cite[Theorem 8.7.1]{Koernlein(diss)}, K\"ornlein proved a finitary version of this remark, and thus \cite[Theorem 6.10]{FerreiraLeusteanPinto(19)} can easily be adapted to the situation where the function $\tau$ instead satisfies
\[
\|x-T_{\ell}\cdots T_{1}(x)\|\leq \frac{1}{\tau(k)+1}\to \forall i<\ell\, \|x-S_i(x)\|\leq \frac{1}{k+1}.
\]

Our result entails the strong convergence of the viscosity version of Bauschke's schema with a Rakotch map towards a common fixed point. In \cite{Jung(06)}, Jung extended the convergence of the viscosity approximation method of finite families to the setting of (suitable) Banach spaces under a slightly more general condition than $(C3)$. Thus, we obtained a rate of metastability for Jung's schema in Hilbert spaces under the slightly stronger condition but for more general contractions $\phi$.

As a last application we look at proximal point algorithms, a setting in which sequences of nonexpansive maps are also naturally considered. Let $X$ be a Hilbert space, and $T:D(T)\subset X\to 2^X$ a monotone operator, i.e. $\langle x-x', y-y'\rangle \geq 0$ for any $y\in T(x), y'\in T(x')$. Furthermore, assume that $T$ is maximal in the sense that its graph is not properly contained in the graph of any other monotone operator. For a real number $\lambda >0$, the resolvent function $J_{\lambda}$ is the single-valued function $(\rm{Id}+\lambda T)^{-1}$. It is well known that $J_{\lambda}$ is nonexpansive and its fixed point set coincides with the set of zeros of $T$. First introduced by Martinet in \cite{Martinet(70)}, the so-called proximal point algorithm (\rm{PPA}) is defined by $x_{n+1}=J_{\lambda_n}(x_n)$, where $(\lambda_n)$ is a sequence of positive real numbers. It is known that the \rm{PPA} is only weakly convergent (\cite{Rockafellar(76)} and \cite{Guler(91)}), and so modifications to the algorithm were proposed. Introduced in \cite{KamimuraTakahashi(00)}, and independently in \cite{Xu(02)i}, we recall one such generalization: For $u, w_0\in X$, sequences of real numbers $(\alpha_n)\subset [0,1]$ and $(\lambda_n)\subset \R^+$, the iteration $(w_n)$ defined by
\begin{equation}\label{hppa}\tag{\rm{HPPA}}
	w_{n+1}=\alpha_nu+(1-\alpha_n)J_{\lambda_n}(w_n),
\end{equation}
is the Halpern type proximal point algorithm. Note that when $w_0=u$ this iteration is the particular case of \eqref{SnH} with $S_n=J_{\lambda_n}$ for all $n\in \N$.

Quantitative versions of the strong convergence of this iteration were carry out in \cite{Pinto(ta)}, \cite{LeusteanPinto(ta)} and \cite{DinisPinto(ta)}. Recently a generalization of \ref{hppa} to Banach spaces and accretive operators was studied in \cite{AoyamaToyoda(17)} and a rate of metastability was subsequently obtained in \cite{Kohlenbach(20)}. Pertaining to the Halpern type proximal point algorithm, this result is the most general so far, as strong convergence is established only asking that $(\alpha_n)\subset (0,1]$ be a sequence slowly converging to zero and that $\inf\lambda_n >0$. (The rate extracted in \cite{Kohlenbach(20)}, receives as an input a rate of metastability for a Browder type sequence, which can be instantiated with that of \cite{KohlenbachSipos(ta)}.) Hence, Theorem~\ref{SHtheo} entails a rate of metastability for a viscosity version of the Halpern type proximal point algorithm under appropriate conditions (see also \cite{AoyamaToyoda(19)}).

\subsection{Hybrid viscosity KM-iteration}

In this subsection, we give a rate of metastability for the \rm{vKM} iteration \eqref{vKM}, provided one has a rate of metastability for Browder sequences. Our approach makes use of a quantitative analysis of Theorem 5 in \cite{Xu(2020)} (for the case with no error terms). The following result computes a rate of metastability for the \rm{vKM} iteration from a rate of metastability for viscosity-Browder sequences.
\begin{theorem}\label{vKM-theorem}
	Let $C$ be a nonempty bounded convex subset of a hyperbolic space and let $b\in \N^*$ be a bound on the diameter of $C$. Consider $T$ a nonexpansive map on $C$, an $r$-contraction $\phi:C\to C$  with  $\delta\in (0,1)$ such that $r\leq 1-\delta$, and sequences $(\alpha_n), (\beta_n) \subset (0,1]$ satisfying $\sum^{\infty}_{n=1} \alpha_n\beta_n=\infty$ with a monotone rate of divergence $\mu_1$, and $\lim \frac{|\alpha_n-\alpha_{n-1}|}{\alpha_n^2\beta_n}=0$ with a monotone rate of convergence $\mu_2$. Let $(\tilde{x}_n)$ be the viscosity-Browder sequence defined with $T, \phi$ and the sequence $(\alpha_n)$ and $(x_n)$ be 
defined by \eqref{vKM}. 
Then $d(x_{n+1},\tilde{x}_n)\to 0$ with rate of convergence 
	\begin{equation*}
	\begin{gathered}
	\Xi(\varepsilon):=\sigma_1[\tilde{\mu}_1,b]\left(\varepsilon, \mu_2\left(\frac{\delta^2\varepsilon}{2b}\right)\right),\ \mbox{where} \\
	\sigma_1 \text{ is as in Lemma~\ref{Hlem1}},\ \mbox{and} \ 
	\widetilde{\mu_1}(k):=\mu_1\left(\left\lceil \frac{k}{\delta}\right\rceil\right).
	\end{gathered}
	\end{equation*}
In particular, if $(\tilde{x_n})$ is a Cauchy sequence with rate of 
metastability $\Psi$ then $(x_n)$ is  
a Cauchy sequence with metastability rate
	\begin{equation*}
\begin{gathered}	
\Omega(\varepsilon, f)=\Omega[b, \delta, \Psi,\mu_1,\mu_2](\varepsilon, f)=\max\{\Xi(\varepsilon/3), \Psi(\varepsilon/3, \hat{f})\}+1, \ \mbox{where} \\
\hat{f}(n):=f(\max\{ \Xi(\varepsilon/3),n\}+1).\\
\end{gathered}
	\end{equation*}

\end{theorem}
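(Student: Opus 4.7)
The strategy is to compare $x_{n+1}$ with the viscosity--Browder point $\tilde x_n$, exploiting that (i) the self-map $z\mapsto (1-\alpha_n)T(z)\oplus\alpha_n\phi(z)$ is an $(1-\alpha_n\delta)$-contraction with fixed point $\tilde x_n$, and (ii) the net $(\tilde x_n)$ is slowly varying, controlled by $|\alpha_n-\alpha_{n-1}|/(\alpha_n\delta)$. Once $d(x_{n+1},\tilde x_n)\to 0$ with a concrete rate is established, the metastability for $(x_n)$ is a routine triangle-inequality transfer from the metastability for $(\tilde x_n)$.

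\textbf{Step 1: the core recursion.} Set $a_n:=d(x_n,\tilde x_{n-1})$ for $n\ge 1$. Applying (W4) to the definition of $x_{n+1}$ together with the identity $\tilde x_n=(1-\beta_n)\tilde x_n\oplus \beta_n((1-\alpha_n)T(\tilde x_n)\oplus\alpha_n\phi(\tilde x_n))$ and using nonexpansivity of $T$ and $d(\phi(u),\phi(v))\le(1-\delta)d(u,v)$, one obtains
\[
d(x_{n+1},\tilde x_n)\le (1-\alpha_n\beta_n\delta)\,d(x_n,\tilde x_n).
\]
The triangle inequality $d(x_n,\tilde x_n)\le a_n+d(\tilde x_{n-1},\tilde x_n)$ then yields
\[
a_{n+1}\le (1-\lambda_n)a_n+\lambda_n b_n, \qquad \lambda_n:=\alpha_n\beta_n\delta,\quad b_n:=\frac{d(\tilde x_{n-1},\tilde x_n)}{\lambda_n},
\]
which is in the shape required by Lemma~\ref{Hlem1_restricted}.

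\textbf{Step 2: controlling $d(\tilde x_n,\tilde x_{n-1})$ and invoking Xu's lemma.} Splitting $d(\tilde x_n,\tilde x_{n-1})$ via the point $(1-\alpha_n)T(\tilde x_{n-1})\oplus\alpha_n\phi(\tilde x_{n-1})$, the first summand is bounded by $(1-\alpha_n\delta)d(\tilde x_n,\tilde x_{n-1})$ (using (W4) together with the nonexpansivity of $T$ and the Rakotch bound for $\phi$), and the second summand is bounded using (W2) by $|\alpha_n-\alpha_{n-1}|\,d(T(\tilde x_{n-1}),\phi(\tilde x_{n-1}))\le |\alpha_n-\alpha_{n-1}|\,b$. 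Rearranging,
\[
d(\tilde x_n,\tilde x_{n-1})\le \frac{|\alpha_n-\alpha_{n-1}|\,b}{\alpha_n\delta},\qquad b_n\le \frac{|\alpha_n-\alpha_{n-1}|\,b}{\alpha_n^2\beta_n\delta^2}.
\]
By the hypothesis on $\mu_2$, $b_n\le\varepsilon/2$ for $n\ge \mu_2(\delta^2\varepsilon/(2b))$, while $\tilde\mu_1$ is a rate of divergence for $\sum\lambda_n=\delta\sum\alpha_n\beta_n=\infty$. Since $a_n\le b$, Lemma~\ref{Hlem1_restricted} with $B:=b$ delivers $a_i\le\varepsilon$ for all $i\ge\Xi(\varepsilon)$, i.e.\ a rate of convergence for $d(x_{n+1},\tilde x_n)\to 0$.

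\textbf{Step 3: transfer to metastability of $(x_n)$.} Given $\varepsilon>0$ and $f:\N\to\N$, apply the metastability of $(\tilde x_n)$ at $(\varepsilon/3,\hat f)$: there exists $m\le\Psi(\varepsilon/3,\hat f)$ such that $d(\tilde x_i,\tilde x_j)\le\varepsilon/3$ for $i,j\in[m,\hat f(m)]$. Set $n:=\max\{\Xi(\varepsilon/3),m\}+1\le\Omega(\varepsilon,f)$. For $i,j\in[n,f(n)]$, Step 2 gives $d(x_i,\tilde x_{i-1}),\,d(x_j,\tilde x_{j-1})\le\varepsilon/3$, while $i{-}1,j{-}1\in[m,f(n)-1]\subseteq[m,\hat f(m)]$ by the very definition of $\hat f$, so $d(\tilde x_{i-1},\tilde x_{j-1})\le\varepsilon/3$. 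Three applications of the triangle inequality yield $d(x_i,x_j)\le\varepsilon$, as required.

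\textbf{Expected obstacle.} The only real technical point is the bookkeeping in Step 1 (isolating the contraction factor $(1-\alpha_n\beta_n\delta)$ and identifying the right "error" sequence $b_n$) and in Step 3 (designing $\hat f$ so that the metastability interval for $(\tilde x_n)$ covers $[n-1,f(n)-1]$ regardless of whether the max in $n$ is realised by $\Xi(\varepsilon/3)$ or by $m$). The rest is a direct quantitative reading of Xu's argument via Lemma~\ref{Hlem1_restricted}.
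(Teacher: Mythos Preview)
Your proposal is correct and follows essentially the same approach as the paper: derive the contraction estimate $d(x_{n+1},\tilde x_n)\le(1-\delta\alpha_n\beta_n)d(x_n,\tilde x_n)$, bound $d(\tilde x_n,\tilde x_{n-1})$ via $(W2),(W4)$ to get the recursion $a_{n+1}\le(1-\lambda_n)a_n+\lambda_n\gamma_n$, apply the quantitative Xu lemma, and then transfer metastability by a three-term triangle inequality with a suitably shifted counterfunction. The only cosmetic differences are that the paper uses Lemma~\ref{Hlem1} (for arbitrary $p$) rather than Lemma~\ref{Hlem1_restricted}, and performs the index shift after the triangle estimate (working with $i,j\in[n,f(n+1)]$ and the point $n+1$) rather than before as you do; both bookkeepings are equivalent.
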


\begin{proof}
	Let $(\tilde{x}_n)$ denote for the viscosity-Browder sequence, i.e.
	\begin{equation*}
	\tilde{x}_n=(1-\alpha_n)T(\tilde{x}_n)\oplus \alpha_n\phi(\tilde{x}_n),\, n\in \N.
	\end{equation*}
	
	Using (W2) and (W4),
	\begin{align*}
	d(\tilde{x}_{n+1}, \tilde{x}_n)&\stackrel{\hphantom{\mathrm{W4}}}{\leq} d(\tilde{x}_{n+1}, W(T(\tilde{x}_n), \phi(\tilde{x}_n), \alpha_{n+1}))+d(W(T(\tilde{x}_n), \phi(\tilde{x}_n), \alpha_{n+1}), \tilde{x}_n)\\
	&\stackrel{\mathrm{W2}}{\leq} d(\tilde{x}_{n+1}, W(T(\tilde{x}_n), \phi(\tilde{x}_n), \alpha_{n+1})) + |\alpha_{n+1}-\alpha_n| d(T(\tilde{x}_n), \phi(\tilde{x}_n))\\
	&\stackrel{\mathrm{W4}}{\leq} (1-\alpha_{n+1}+\alpha_{n+1}r)d(\tilde{x}_{n+1}, \tilde{x}_n) + |\alpha_{n+1}-\alpha_n| d(T(\tilde{x}_n), \phi(\tilde{x}_n))\\
	&\stackrel{\hphantom{\mathrm{W4}}}{\leq} (1-\delta\alpha_{n+1})d(\tilde{x}_{n+1}, \tilde{x}_n)+|\alpha_{n+1}-\alpha_n|\cdot b,
	\end{align*}
	which implies
	\begin{equation*}
	d(\tilde{x}_{n+1},\tilde{x}_n)\leq \frac{b|\alpha_{n+1}-\alpha_n|}{\delta\alpha_{n+1}}.
	\end{equation*}
	
	Then, by \eqref{vKM} and using (W1) and (W4), we have for $n\ge 1$
	\begin{align*}
	d(x_{n+1}, \tilde{x}_n)&\stackrel{\mathrm{W1}}{\leq} (1-\beta_n)d(x_n, \tilde{x}_n)+\beta_nd((1-\alpha_n)T(x_n)\oplus\alpha_n\phi(x_n), \tilde{x}_n)\\
	&\stackrel{\mathrm{W4}}{\leq} (1-\beta_n)d(x_n, \tilde{x}_n)+\beta_n\left((1-\alpha_n)d(x_n, \tilde{x}_n)+\alpha_nrd(x_n, \tilde{x}_n)\right)\\
	&\stackrel{\hphantom{\mathrm{W4}}}{\leq} (1-\delta\alpha_n\beta_n)d(x_n, \tilde{x}_n)\leq (1-\lambda_n)d(x_n, \tilde{x}_{n-1})+\lambda_n\gamma_n,
	\end{align*}
	where $\lambda_n=\delta\alpha_n\beta_n$ and $\gamma_n=\frac{b|\alpha_n-\alpha_{n-1}|}{\delta^2\alpha_n^2\beta_n}$.
	
	Since $\mu_1$ is a rate of divergence for $(\sum\alpha_n\beta_n)$, we have that $\widetilde{\mu_1}$ is a rate of divergence for $(\sum\lambda_n)$. Hence by Lemma~\ref{Hlem1} (for an arbitrary $p$), we conclude that $d(x_{n+1}, \tilde{x}_n)$ converges to zero with rate of convergence $\Xi$.
	
	Let $\varepsilon >0$ and $f:\N\to\N$ be given. Since $\Psi$ is a rate of metastability for $(\tilde{x}_n)$, there exists $n'\leq \Psi(\varepsilon/3, \hat{f})$ be such that $d(\tilde{x}_i, \tilde{x}_j)\leq \frac{\varepsilon}{3}$, for $i,j\in[n', \hat{f}(n')]$. Take $n:=\max\{\Xi(\varepsilon/3), n')$. We have $n\leq \Omega(\varepsilon, f)-1$ and for $i, j\in [n, f(n+1)]$
	\begin{equation*}
	d(x_{i+1}, x_{j+1})\leq d(x_{i+1}, \tilde{x}_i)+d(\tilde{x}_i, \tilde{x}_j)+d(x_{j+1}, \tilde{x}_j)\leq \varepsilon,
	\end{equation*}
	since $[n, f(n+1)]\subset [n', \hat{f}(n')]$. This entails the result. 
\end{proof}

Due to Corollary~\ref{VBcor1}, we can just use a rate of metastability for the Browder sequence in the previous result (see also Lemma~\ref{lowerbound_meta}).
\begin{corollary} 
	Let $X,C,b,T,\phi,r,\delta,(\alpha_n),(\beta_n),\mu_1,\mu_2,(x_n), \sigma_1$ be as in Theorem~\ref{vKM-theorem}. Let $\theta_b$ be a monotone function satisfying \eqref{Bcondition} for $(y_n)$ the Browder sequence defined with $T$ and the sequence $(\alpha_n)$.
	Then $(x_n)$ defined by \eqref{vKM} is a Cauchy sequence with metastability rate
	\begin{equation*}
	\Omega(\varepsilon, f)=\Omega[b, \delta, \Psi,\mu_1,\mu_2](\varepsilon, f)
	\end{equation*}
	where
	\begin{equation*}
	\begin{gathered}
	\Psi=\Psi[b, \delta, \theta_b] \text{ is as in Corollary~\ref{VBcor1}},\\
	\Omega \text{ is as in Theorem~\ref{vKM-theorem}}.\\
	\end{gathered}
	\end{equation*}
\end{corollary}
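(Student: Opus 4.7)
The plan is to combine Corollary~\ref{VBcor1} with Theorem~\ref{vKM-theorem} by a direct substitution. The viscosity-Browder sequence $(\tilde{x}_n)$ that appears as the auxiliary object in Theorem~\ref{vKM-theorem} is defined by the implicit recursion $\tilde{x}_n=(1-\alpha_n)T(\tilde{x}_n)\oplus\alpha_n\phi(\tilde{x}_n)$, which coincides exactly with the sequence treated in Corollary~\ref{VBcor1} under the same data $(C,b,T,\phi,r,\delta,(\alpha_n))$. Given the assumed monotone $\theta_b$ satisfying \eqref{Bcondition} for the $T$-Browder sequence $(y_n)$, Corollary~\ref{VBcor1} then provides $\Psi=\Psi[b,\delta,\theta_b]$ as a rate of metastability for $(\tilde{x}_n)$.

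Next I would verify that the remaining hypotheses of Theorem~\ref{vKM-theorem} are all inherited from the statement: the bounded convex $C$ with diameter bound $b$, the nonexpansive $T$, the $r$-contraction $\phi$ with $r\le 1-\delta$, and the sequences $(\alpha_n),(\beta_n)\subset(0,1]$ together with the monotone rate of divergence $\mu_1$ for $\sum\alpha_n\beta_n=\infty$ and the monotone rate of convergence $\mu_2$ for $|\alpha_n-\alpha_{n-1}|/(\alpha_n^2\beta_n)\to 0$. Feeding the $\Psi$ obtained in the previous paragraph into Theorem~\ref{vKM-theorem} then yields the metastability rate $\Omega[b,\delta,\Psi,\mu_1,\mu_2]$ for the vKM iteration $(x_n)$, which is precisely the claimed bound.

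The hard part will be essentially nothing beyond bookkeeping, because Theorem~\ref{vKM-theorem} consumes $\Psi$ as a black-box rate of metastability for $(\tilde{x}_n)$ without requiring any further structural properties (in particular no monotonicity of $\Psi$ in the counterfunction argument). The only point worth explicit confirmation is the identification of the $(x_n)$ of Corollary~\ref{VBcor1} with the $(\tilde{x}_n)$ of Theorem~\ref{vKM-theorem}, which is immediate from the two defining equations. Hence the corollary follows by one-line composition of the two bounds.
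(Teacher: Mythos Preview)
Your proposal is correct and matches the paper's approach exactly: the paper does not give a separate proof for this corollary but simply remarks that one plugs the metastability rate $\Psi$ for the viscosity-Browder sequence from Corollary~\ref{VBcor1} into Theorem~\ref{vKM-theorem}. Your identification of the $(\tilde{x}_n)$ of Theorem~\ref{vKM-theorem} with the sequence governed by Corollary~\ref{VBcor1}, and the observation that $\Psi$ is consumed as a black box, are precisely the points the paper relies on.
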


As before we also get the following corollary 
\begin{corollary} 
Let $X,C,b,T,\phi,r,\delta,(\alpha_n),(\beta_n),\mu_1,\mu_2,(x_n), \sigma_1$ be as in Theorem \ref{vKM-theorem}. Let $\rho(\varepsilon)$ be a common (for all 
anchor points $u\in C$) rate of convergence for the Browder 
sequence $(y_n(u))$ (w.r.t. $(\alpha_n)$) of $T.$ 
Then $(x_n)$ defined by \eqref{vKM} is a Cauchy sequence with Cauchy rate
	\begin{equation*}
	\Omega(\varepsilon)=\Omega[b, \delta, \theta_b,\mu_1,\mu_2](\varepsilon)=\max\{\Xi(\varepsilon/3), \Psi(\varepsilon/3)\}+1,
	\end{equation*}
where
	\begin{equation*}
	\begin{gathered}
	\Xi(\varepsilon):=\sigma_1[\tilde{\mu}_1,b]\left(\varepsilon, \mu_2\left(\frac{\delta^2\varepsilon}{3b}\right)\right),\\
	\sigma_1 \text{ is as in Lemma~\ref{Hlem1}},\\
	\widetilde{\mu_1}(k):=\mu_1\left(\left\lceil \frac{k}{\delta}\right\rceil\right),\\
	\Psi=\Psi[b, \delta, \theta_b] \text{ is as in Corollary~\ref{VB-rate}}.\\
	\end{gathered}
	\end{equation*}
\end{corollary}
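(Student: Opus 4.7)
\medskip
\noindent
\textbf{Proof plan.} The argument is a direct combination of Theorem~\ref{vKM-theorem} and Corollary~\ref{VB-rate}, exploiting the fact that rates of convergence (rather than mere metastability rates) compose by triangle inequality without any $f$-bookkeeping.

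First, the proof of Theorem~\ref{vKM-theorem} already establishes -- using only the monotone rates $\mu_1$ and $\mu_2$, the bound $b$, the contraction constant $\delta$ and Lemma~\ref{Hlem1} -- that $d(x_{n+1},\tilde{x}_n)\to 0$ with rate of convergence $\Xi(\varepsilon)=\sigma_1[\widetilde{\mu}_1,b]\bigl(\varepsilon,\mu_2(\delta^2\varepsilon/(3b))\bigr)$; the denominator $3b$ here (instead of $2b$ in the theorem) only makes the estimate looser, since $\mu_2(\delta^2\varepsilon/(3b))\ge \mu_2(\delta^2\varepsilon/(2b))$, so this remains a valid rate of convergence.

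Next, since $\rho$ is a common Cauchy rate for the original Browder sequences $(y_n(u))$ (uniformly in the anchor $u\in C$), Corollary~\ref{VB-rate} yields the Cauchy rate $\Psi(\varepsilon)=\rho(\varepsilon\delta^2/8)$ for the viscosity-Browder sequence $(\tilde{x}_n)$ defined with $T$, $\phi$ and $(\alpha_n)$.

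Finally, given $\varepsilon>0$, set $n:=\max\{\Xi(\varepsilon/3),\Psi(\varepsilon/3)\}$, so that $n+1=\Omega(\varepsilon)$. For any $i,j\ge n+1$, apply the triangle inequality
\[
d(x_i,x_j)\le d(x_i,\tilde{x}_{i-1})+d(\tilde{x}_{i-1},\tilde{x}_{j-1})+d(\tilde{x}_{j-1},x_j).
\]
Since $i-1,j-1\ge \Xi(\varepsilon/3)$, the first and third terms are each bounded by $\varepsilon/3$; since $i-1,j-1\ge \Psi(\varepsilon/3)$, the middle term is bounded by $\varepsilon/3$. Summing gives $d(x_i,x_j)\le\varepsilon$, which is precisely the Cauchy property with rate $\Omega$.

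There is essentially no obstacle: the only subtle point is matching the $3b$ denominator inside $\Xi$ with the $\varepsilon/3$-split of the triangle inequality (so that Lemma~\ref{Hlem1} still applies with $b_i\le (\varepsilon/3)/2$), which is automatic by the monotonicity of $\mu_2$. The simplification relative to Theorem~\ref{vKM-theorem} is that no auxiliary function $\hat f$ needs to be built, because Cauchy rates, being independent of $f$, collapse the metastability construction to a plain maximum.
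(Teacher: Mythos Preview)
Your proof is correct and matches the paper's intended approach. The paper gives no explicit proof here, writing only ``As before we also get the following corollary,'' which refers to the pattern of Corollary~\ref{VB-rate}: feed the Cauchy rate $\rho$ through the metastability machinery, observe that the resulting bound is independent of $f$, and invoke Proposition~\ref{prop-rate} to recover a Cauchy rate. Your direct triangle-inequality argument is the unwinding of exactly this, and your remark that the $3b$ (versus $2b$) in $\Xi$ is harmless by monotonicity of $\mu_2$ is correct.
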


\subsection{Inexact algorithms}

In many applications of the algorithms studied above it is useful to allow for the iteration to contain error terms. In this subsection we provide rates of
 metastability of such relaxed versions under appropriate assumptions on the sequence of error terms. In the following, consider $(S_n)$ a family of nonexpansive maps, $\phi$ an $r$-contraction, $(\alpha_n)\subset (0,1]$ and $(\varepsilon_n)\subset \R_0^+$ a sequence of allowed errors. Assume that $\delta\in(0,1)$ is such that $r\leq 1-\delta$, and $b\in\N^*$ is a bound on the diameter of $C$.

\begin{proposition}\label{BrowderRelaxed}
	Let $(x_n)$ be a $(S_n)$-viscosity Browder sequence~\eqref{VSnB} and $(x'_n)$ be a sequence in $C$ satisfying for all $n\in\N$,
	\begin{equation*}
	d(x'_n,(1-\alpha_n)S_n(x'_n)\oplus\alpha_n\phi(x'_n))\leq \varepsilon_n.
	\end{equation*}
	If $\lim \frac{\varepsilon_n}{\alpha_n}=0$, then $\lim d(x_n,x'_n)=0$. Furthermore, if $\Psi$ is a rate of metastability for $(x_n)$ and $\rho$ is a rate of convergence for $\lim\frac{\varepsilon_n}{\alpha_n}=0$, then $\rho_{\delta}(\varepsilon):=\rho(\varepsilon\delta)$ is a rate of convergence for $\lim d(x_n,x'_n)=0$, and
	\begin{equation*}
	\Psi'(\varepsilon, f)=\max\{\rho_{\delta}(\varepsilon/3), \Psi(\varepsilon/3, f_{\rho_\delta, \varepsilon})\},
	\end{equation*}
	with $f_{\rho_\delta, \varepsilon}(n):=f(\max\{\rho_{\delta}(\varepsilon/3), n\})$, is a metastability rate for $(x'_n)$.
\end{proposition}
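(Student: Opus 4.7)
The plan is to first derive a pointwise bound of the form $d(x_n,x'_n)\le \varepsilon_n/(\alpha_n\delta)$, from which both the rate of convergence and the rate of metastability will follow routinely by triangle inequality arguments.

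For the pointwise estimate, I would apply the triangle inequality
\[ d(x_n,x'_n)\le d(x_n,(1-\alpha_n)S_n(x'_n)\oplus\alpha_n\phi(x'_n))+\varepsilon_n, \]
using the assumed error bound on $x'_n$. Since $x_n=(1-\alpha_n)S_n(x_n)\oplus\alpha_n\phi(x_n)$, I can bound the first summand using (W4), the nonexpansivity of $S_n$, and the fact that $\phi$ is an $r$-contraction with $r\le 1-\delta$, obtaining
\[ d(x_n,x'_n)\le (1-\alpha_n)d(x_n,x'_n)+\alpha_n(1-\delta)d(x_n,x'_n)+\varepsilon_n=(1-\alpha_n\delta)d(x_n,x'_n)+\varepsilon_n. \]
Rearranging yields $d(x_n,x'_n)\le \frac{\varepsilon_n}{\alpha_n\delta}$, which is the crucial inequality.

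From this, if $\rho$ is a rate of convergence for $\varepsilon_n/\alpha_n\to 0$, then for all $n\ge \rho(\varepsilon\delta)=\rho_\delta(\varepsilon)$ we have $\varepsilon_n/\alpha_n\le \varepsilon\delta$, hence $d(x_n,x'_n)\le\varepsilon$; so $\rho_\delta$ is a rate of convergence for $d(x_n,x'_n)\to 0$.

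For the metastability rate of $(x'_n)$, given $\varepsilon>0$ and $f:\N\to\N$, I would apply the metastability rate $\Psi$ of $(x_n)$ at precision $\varepsilon/3$ with the shifted counterfunction $f_{\rho_\delta,\varepsilon}$ to obtain some $n_0\le \Psi(\varepsilon/3, f_{\rho_\delta,\varepsilon})$ with $d(x_i,x_j)\le\varepsilon/3$ for all $i,j\in[n_0,f_{\rho_\delta,\varepsilon}(n_0)]$. Setting $n:=\max\{\rho_\delta(\varepsilon/3),n_0\}\le\Psi'(\varepsilon,f)$, the definition of $f_{\rho_\delta,\varepsilon}$ ensures $f_{\rho_\delta,\varepsilon}(n_0)=f(n)$ (since $n\ge n_0$), so $[n,f(n)]\subseteq[n_0,f_{\rho_\delta,\varepsilon}(n_0)]$ and moreover $[n,f(n)]\subseteq[\rho_\delta(\varepsilon/3),\infty)$. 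Combining the bound $d(x_i,x'_i),d(x_j,x'_j)\le\varepsilon/3$ from the rate of convergence with $d(x_i,x_j)\le\varepsilon/3$ via the triangle inequality yields $d(x'_i,x'_j)\le\varepsilon$ for all $i,j\in[n,f(n)]$. No step looks genuinely difficult; the only mild subtlety is correctly matching the counterfunctions so that $[n,f(n)]$ embeds into $[n_0,f_{\rho_\delta,\varepsilon}(n_0)]$, which is precisely what the definition of $f_{\rho_\delta,\varepsilon}$ achieves.
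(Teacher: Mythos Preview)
Your proposal is correct and follows essentially the same approach as the paper: derive the pointwise bound $d(x_n,x'_n)\le \varepsilon_n/(\delta\alpha_n)$ via (W4), nonexpansivity of $S_n$, and $r\le 1-\delta$, then combine this with the metastability of $(x_n)$ at level $\varepsilon/3$ applied to the shifted counterfunction, exactly as you describe. The paper's proof is identical in substance, merely compressing your first two displayed lines into one.
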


\begin{proof}
	Using (W4) we have
	\begin{align*}
	d(x_n,x'_n)&\leq (1-\alpha_n)d(S_n(x_n),S_n(x'_n))+\alpha_nd(\phi(x_n),\phi(x'_n))+\varepsilon_n\\
	&\leq (1-\alpha_n)d(x_n,x'_n)+\alpha_n rd(x_n,x'_n)+\varepsilon_n.
	\end{align*}
	So $d(x_n,x'_n)\leq \frac{\varepsilon_n}{\delta\alpha_n}$ and $\rho_{\delta}$ is a rate of convergence for $\lim d(x_n,x'_n)=0$.
	
	For a given $\varepsilon>0$ and $f:\N\to\N$, consider $n'\leq \Psi(\varepsilon/3, f_{\rho_\delta, \varepsilon})$ such that $d(x_i,x_j)\leq \frac{\varepsilon}{3}$, for all $i,j\in[n', f_{\rho_\delta, \varepsilon}(n')]$; and define $n:=\max \{\rho_{\delta}(\varepsilon/3), n'\}$. Then $n\leq \Psi'(\varepsilon, f)$ and
	\begin{equation*}
	d(x'_i,x'_j)\leq d(x'_i,x_i)+d(x_i,x_j)+d(x_j,x'_j)\leq \varepsilon,
	\end{equation*}
	for all $i,j\in [n, f(n)]$. This shows that $\Psi'$ is a rate of metastability for $(x'_n)$.	
\end{proof}

Similar one can study the metastability of a relaxed Halpern type iteration.
\begin{proposition}\label{HalpernRelaxed}
	Let $(x_n)$ be a $(S_n)$-viscosity Halpern iteration~\eqref{VSnH} and $(x'_n)$ be a sequence in $C$ satisfying $x'_0=x_0$ and for all $n\in\N$,
	\begin{equation*}
	d(x'_{n+1},(1-\alpha_n)S_n(x'_n)\oplus\alpha_n\phi(x'_n))\leq \varepsilon_n.
	\end{equation*}
	If $\sum \alpha_n=\infty$ and $\lim\frac{\varepsilon_n}{\alpha_n}=0$, then $\lim d(x_n,x'_n)=0$. Furthermore, if $\Psi$ is a rate of metastability for $(x_n)$, $A$ is a rate of divergence for $(\sum\alpha_n)$ and $\rho$ is a rate of convergence for $\lim\frac{\varepsilon_n}{\alpha_n}=0$, then $\Gamma(\varepsilon):=\sigma_1[\tilde{A}, b](\varepsilon, \rho(\frac{\delta\varepsilon}{2}))$ is a rate of convergence for $\lim d(x_n,x'_n)=0$, and
	\begin{equation*}
	\Psi'(\varepsilon, f)=\max\{\Gamma(\varepsilon/3), \Psi(\varepsilon/3, f_{\Gamma, \varepsilon})\},
	\end{equation*}
	is a metastability rate for $(x'_n)$, where $f_{\Gamma, \varepsilon}(n):=f(\max\{\Gamma(\varepsilon/3), n\})$, $\sigma_1$ is as in Lemma~\ref{Hlem1}, and $\tilde{A}(k):=A\left(\lceil\frac{k}{\delta}\rceil\right)$.
\end{proposition}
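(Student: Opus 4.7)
The plan is to reduce to Lemma~\ref{Hlem1} applied to the sequence $a_n:=d(x_n,x'_n)$. Unlike the Browder case (Proposition~\ref{BrowderRelaxed}), where $(x_n)$ and $(x'_n)$ are both implicit and one directly obtains $d(x_n,x'_n)\le \varepsilon_n/(\delta\alpha_n)$, in the Halpern case the comparison between $(x_n)$ and $(x'_n)$ is recursive, so we only get a Xu-type inequality and must use the divergence of $\sum\alpha_n$ to conclude.

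First, I would derive the recursive inequality. Using the triangle inequality with the error term, followed by (W4), the nonexpansivity of $S_n$, and the $r$-contractivity of $\phi$:
\begin{align*}
d(x_{n+1},x'_{n+1}) &\le d\bigl(x_{n+1},(1-\alpha_n)S_n(x'_n)\oplus\alpha_n\phi(x'_n)\bigr)+\varepsilon_n\\
&\le (1-\alpha_n)d(S_n(x_n),S_n(x'_n))+\alpha_n d(\phi(x_n),\phi(x'_n))+\varepsilon_n\\
&\le (1-\delta\alpha_n)d(x_n,x'_n)+\varepsilon_n.
\end{align*}
Since $x'_0=x_0$, setting $a_n:=d(x_n,x'_n)$, $\lambda_n:=\delta\alpha_n$, and $b_n:=\varepsilon_n/(\delta\alpha_n)$ gives $a_{n+1}\le (1-\lambda_n)a_n+\lambda_n b_n$ with $a_0=0$, and $a_n\le b$ since $x_n,x'_n\in C$.

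Second, since $A$ is a rate of divergence for $\sum\alpha_n$, the function $\tilde{A}(k)=A(\lceil k/\delta\rceil)$ is a rate of divergence for $\sum\lambda_n$. Moreover, by definition of $\rho$, for every $n\ge \rho(\delta\varepsilon/2)$ we have $\varepsilon_n/\alpha_n\le \delta\varepsilon/2$ and so $b_n\le \varepsilon/2$. Lemma~\ref{Hlem1}(i) applied with $N:=\rho(\delta\varepsilon/2)$, arbitrary $p$, and the bound $B=b$ then gives $a_i\le\varepsilon$ for all $i\ge \sigma_1[\tilde{A},b](\varepsilon,\rho(\delta\varepsilon/2))=\Gamma(\varepsilon)$, proving that $\Gamma$ is a rate of convergence for $\lim d(x_n,x'_n)=0$.

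Finally, the passage from the rate of metastability for $(x_n)$ to one for $(x'_n)$ is by the same $\varepsilon/3$-splitting argument used in Proposition~\ref{BrowderRelaxed}. Given $\varepsilon>0$ and $f:\N\to\N$, pick $n'\le\Psi(\varepsilon/3,f_{\Gamma,\varepsilon})$ witnessing $\varepsilon/3$-metastability of $(x_n)$ on the interval $[n',f_{\Gamma,\varepsilon}(n')]$, and set $n:=\max\{\Gamma(\varepsilon/3),n'\}\le \Psi'(\varepsilon,f)$. By the definition of $f_{\Gamma,\varepsilon}$ we have $[n,f(n)]\subseteq[n',f_{\Gamma,\varepsilon}(n')]$ and $n\ge\Gamma(\varepsilon/3)$, so for $i,j\in[n,f(n)]$ the triangle inequality
\[
d(x'_i,x'_j)\le d(x'_i,x_i)+d(x_i,x_j)+d(x_j,x'_j)\le \tfrac{\varepsilon}{3}+\tfrac{\varepsilon}{3}+\tfrac{\varepsilon}{3}=\varepsilon
\]
concludes the proof. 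The only nonroutine step is getting the recursive estimate in the form required by Lemma~\ref{Hlem1}; once one sees that the linear absorption factor is $\delta\alpha_n$ (arising from the contraction factor of $\phi$), the rest is bookkeeping.
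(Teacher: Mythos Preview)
Your proof is correct and follows essentially the same approach as the paper: derive the recursive inequality $d(x_{n+1},x'_{n+1})\le (1-\delta\alpha_n)d(x_n,x'_n)+\varepsilon_n$ via (W4), nonexpansivity of $S_n$ and the contraction factor of $\phi$, then apply Lemma~\ref{Hlem1}(i) with $\lambda_n=\delta\alpha_n$, $b_n=\varepsilon_n/(\delta\alpha_n)$ and $N=\rho(\delta\varepsilon/2)$ to obtain the rate $\Gamma$, and finally transfer metastability via the $\varepsilon/3$-splitting exactly as in Proposition~\ref{BrowderRelaxed}. The paper's proof is slightly terser (it defers the metastability step entirely to Proposition~\ref{BrowderRelaxed}), but there is no substantive difference.
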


\begin{proof}
We have for all $n\in\N$,
\begin{align*}
d(x_{n+1},x'_{n+1})&\leq (1-\alpha_n)d(S_n(x_n),S_n(x'_n))+\alpha_n d(\phi(x_n),\phi(x'_n))+\varepsilon_n\\
&\leq (1-\alpha_n)d(x_n,x'_n)+\alpha_nrd(x_n,x'_n)+\varepsilon_n\\
&\leq (1-\delta\alpha_n)d(x_n,x'_n)+\delta\alpha_n\frac{\varepsilon_n}{\delta\alpha_n}.
\end{align*}
Since $\sum\delta\alpha_n=\infty$ and $\lim\frac{\varepsilon_n}{\delta\alpha_n}=0$, by Lemma~\ref{LemmaXu} we conclude that $d(x_n,x'_n)$ converges to zero. From the fact that $A$ is a rate of divergence for $(\sum\alpha_n)$ it follows that $\widetilde{A}$ is a rate of divergence for $(\sum\delta\alpha_n)$. From an application of Lemma~\ref{Hlem1} (for an arbitrary $p$) it follows that $\Gamma$ is a rate of convergence towards zero for $d(x_n,x'_n)$. One then argues that $\Psi'$ is a rate of metastability for $(x'_n)$ in the same way as in the proof of Proposition~\ref{BrowderRelaxed}.
\end{proof}

\begin{remark}\label{remark2}
One can also conclude $\lim d(x_n,x'_n)=0$ under the assumption $\sum \varepsilon_n<\infty$, using a different version of Xu's lemma -- e.g. {\rm\cite[Lemma 2.5]{Xu(02)i}}. In that case, a metastability rate can easily be obtained using a Cauchy modulus for $(\sum\varepsilon_n)$ -- see {\rm\cite[Lemma 3.4]{LeusteanPinto(ta)}} -- instead of the convergence rate $\rho$.
\end{remark}

We now consider additionally a sequence $(\beta_n)\subset (0,1]$ and discuss the metastability of the relaxed version of \rm{vKM}.

\begin{proposition}\label{vKMRelaxed}
	Let $(x_n)$ be a \rm{vKM} iteration given by \eqref{vKM} and $(x'_n)$ be a sequence in $C$ satisfying $x'_0=x_0$ and for all $n\in\N$,
	\begin{equation*}
	d(x'_{n+1},(1-\beta_n)x'_n\oplus \beta_n\left((1-\alpha_n)T(x'_n)\oplus \alpha_n \phi(x'_n)\right))\leq \varepsilon_n.
	\end{equation*}
	If $\sum \alpha_n\beta_n=\infty$ and $\lim\frac{\varepsilon_n}{\alpha_n\beta_n}=0$, then $\lim d(x_n,x'_n)=0$. Furthermore, if $\Psi$ is a rate of metastability for $(x_n)$, $A$ is a rate of divergence for $(\sum\alpha_n\beta_n)$ and $\rho$ is a rate of convergence for $\lim\frac{\varepsilon_n}{\alpha_n\beta_n}=0$, then $\Gamma(\varepsilon):=\sigma_1[\tilde{A}, b](\varepsilon, \rho(\frac{\delta\varepsilon}{2}))$ is a rate of convergence for $\lim d(x_n,x'_n)=0$, and
	\begin{equation*}
	\Psi'(\varepsilon, f)=\max\{\Gamma(\varepsilon/3), \Psi(\varepsilon/3, f_{\Gamma, \varepsilon})\},
	\end{equation*}
	is a metastability rate for $(x'_n)$, where $f_{\Gamma, \varepsilon}(n):=f(\max\{\Gamma(\varepsilon/3), n\})$, $\sigma_1$ is as in Lemma~\ref{Hlem1}, and $\tilde{A}(k):=A\left(\lceil\frac{k}{\delta}\rceil\right)$.
\end{proposition}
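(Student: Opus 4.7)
The plan is to follow the pattern of Proposition~\ref{HalpernRelaxed} essentially verbatim, the only real change being the initial per-step estimate which must now handle a nested convex combination (the outer $\beta_n$-combination wrapping an inner $\alpha_n$-combination). First I would bound $d(x_{n+1},x'_{n+1})$ by the triangle inequality: the error assumption gives $d(x'_{n+1}, u'_n) \le \varepsilon_n$, where $u'_n := (1-\beta_n)x'_n \oplus \beta_n((1-\alpha_n)T(x'_n) \oplus \alpha_n\phi(x'_n))$, and $x_{n+1}$ itself is the analogously defined $u_n$. Applying (W4) to the outer $\beta_n$-combination and then again to the inner $\alpha_n$-combination (using nonexpansivity of $T$ and the $r$-contractivity of $\phi$) yields
\begin{equation*}
d(u_n,u'_n) \le (1-\beta_n)d(x_n,x'_n) + \beta_n\bigl((1-\alpha_n)d(x_n,x'_n) + \alpha_n r\, d(x_n,x'_n)\bigr) \le (1-\delta\alpha_n\beta_n)d(x_n,x'_n),
\end{equation*}
using $r \le 1-\delta$. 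Thus $d(x_{n+1},x'_{n+1}) \le (1-\delta\alpha_n\beta_n)d(x_n,x'_n) + \varepsilon_n$, which I rewrite as $(1-\lambda_n)d(x_n,x'_n)+\lambda_n c_n$ with $\lambda_n := \delta\alpha_n\beta_n$ and $c_n := \varepsilon_n/(\delta\alpha_n\beta_n)$.

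Next, the qualitative claim $d(x_n,x'_n) \to 0$ follows immediately from Lemma~\ref{LemmaXu}: the hypothesis $\sum\alpha_n\beta_n = \infty$ gives $\sum\lambda_n=\infty$, and $\varepsilon_n/(\alpha_n\beta_n) \to 0$ gives $c_n \to 0$. For the quantitative rate of convergence, I apply Lemma~\ref{Hlem1}(i) with $B = b$ and an arbitrary $p$. Since $A$ is a rate of divergence for $\sum\alpha_n\beta_n$, $\widetilde{A}(k) = A(\lceil k/\delta\rceil)$ is a rate of divergence for $\sum\lambda_n$. Given $\varepsilon > 0$, the definition of $\rho$ guarantees $\varepsilon_n/(\alpha_n\beta_n) \le \delta\varepsilon/2$ for $n \ge \rho(\delta\varepsilon/2)$, equivalently $c_n \le \varepsilon/2$. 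Lemma~\ref{Hlem1}(i) with $N = \rho(\delta\varepsilon/2)$ then yields $d(x_i,x'_i) \le \varepsilon$ for all $i \ge \sigma_1[\widetilde{A},b](\varepsilon, \rho(\delta\varepsilon/2)) = \Gamma(\varepsilon)$, which is exactly the advertised rate.

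For the metastability claim, I copy the triangle-inequality argument from the end of Proposition~\ref{HalpernRelaxed}. Given $\varepsilon>0$ and $f:\N\to\N$, choose $n' \le \Psi(\varepsilon/3, f_{\Gamma,\varepsilon})$ with $d(x_i,x_j) \le \varepsilon/3$ for all $i,j \in [n', f_{\Gamma,\varepsilon}(n')]$, and set $n := \max\{\Gamma(\varepsilon/3), n'\}$. Then $n \le \Psi'(\varepsilon,f)$, $[n,f(n)] \subseteq [n', f_{\Gamma,\varepsilon}(n')]$ by the choice of $f_{\Gamma,\varepsilon}$, and for $i,j \in [n,f(n)]$,
\begin{equation*}
d(x'_i,x'_j) \le d(x'_i,x_i) + d(x_i,x_j) + d(x_j,x'_j) \le \varepsilon,
\end{equation*}
since $i,j \ge \Gamma(\varepsilon/3)$.

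The only subtlety worth double-checking is the nested (W4) computation and confirming that $r \le 1-\delta$ really collapses to the coefficient $1-\delta\alpha_n\beta_n$; no sequential weak compactness, no use of the $\mu_i$'s or $\theta_b$, and no reliance on a Browder comparison sequence is needed here, so the proof is a clean adaptation and there is no substantive obstacle.
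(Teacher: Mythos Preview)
Your proposal is correct and is exactly the argument the paper intends: the paper's own proof consists of the single sentence ``The proof of this result is similar to that of Proposition~\ref{HalpernRelaxed},'' and your write-up supplies precisely that adaptation, with the nested application of (W4) producing the factor $1-\delta\alpha_n\beta_n$ (just as in the proof of Theorem~\ref{vKM-theorem}), followed by Lemma~\ref{LemmaXu}/Lemma~\ref{Hlem1} and the triangle-inequality metastability step from Proposition~\ref{BrowderRelaxed}.
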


\begin{proof}
The proof of this result is similar to that of Proposition~\ref{HalpernRelaxed}.
\end{proof}

\begin{remark}\label{remark3}
Similar to {\rm Remark~\ref{remark2}}, it is also possible to conclude that $\lim d(x_n,x'_n)=0$ under the assumption $\sum \varepsilon_n<\infty$, and to obtain a rate of metastability rate for $(x'_n)$ using a Cauchy modulus for $(\sum\varepsilon_n)$.\\
Furthermore, both {\rm Propositions~\ref{HalpernRelaxed}} and {\rm~\ref{vKMRelaxed}} can be adapted to use a function $A'$ satisfying \eqref{condprod} instead of the rate of divergence $A$, in which case one makes use of the function $\sigma_2$ from {\rm Lemma~\ref{Hlem1}}, instead of the function $\sigma_1$.
\end{remark}

\section{Rates of convergence}

Let $(X,\|\cdot\|)$ be a normed linear space and $C\subseteq X$ some subset.
Suppose that $T:C\to C$ has at most one fixed point, i.e. 
\[ (1) \ \forall p_1,p_2 \in C\,(p_1=Tp_1\wedge p_2=Tp_2\to p_1=p_2). \]
We say that $T$ has uniformly at most one fixed point with modulus of 
uniqueness $\omega:(0,\infty)\to (0,\infty)$ if 
\[ (2) \ \forall \varepsilon>0\,\forall p_1,p_2\in C\, 
(\| p_1-Tp_1\|,\| p_2-Tp_2\|\le \omega (\varepsilon)\to \| p_1-p_2\|\le 
\varepsilon). \]
If $T$ is continuous and $C$ is compact, $(1)$ implies the existence 
of a modulus $\omega$ such that $(2)$ but in general $(2)$ is stronger 
than $(1).$ However, for large classes of uniqueness proofs one can 
actually extract from a proof of $(1)$ an explicit effective modulus 
$\omega$ satisfying $(2).$ We refer to \cite{Koh2008} for discussions on 
all this. Suppose now that we have for some iterative algorithm 
$(x_n)\subset C$ with $\lim \| x_n-Tx_n\|\to 0$ a rate 
$\tau:(0,\infty)\to \N$ of convergence, i.e. 
\[ \forall \varepsilon>0\, \forall n\ge \tau(\varepsilon)\, (\| 
x_n-Tx_n\|\le \varepsilon). \]
If $\omega$ is a modulus of uniqueness for $T$ in the sense 
above, then $\rho(\varepsilon):=\tau(\omega (\varepsilon))$ is a 
Cauchy rate for $(x_n).$ Hence if $X$ is complete, $C$ closed and 
$T$ is continuous, then $(x_n)$ converges to a (unique) fixed point 
$p$ of $T$ with rate of convergence $\rho.$
\\[1mm]
In the following we describe a class of nonexpansive operators 
$T$ for which a modulus $\omega$ can be computed if $X$ is 
uniformly convex. Consequently, we can then compute rates of 
convergence for $(x_n)$ defined by either Krasnoselskii-Mann 
or by Halpern iterations. 
\begin{definition}
\label{def-acc}
An operator $A:D(A)\to 2^X$ 
is accretive if for all $u\in Ax$ and $v\in Ay$ there exists some $j\in J(x-y)$ such that $\pair{u-v,j}\geq 0,$ where 
$J$ is the normalized duality mapping of $X.$
\end{definition}
Various strengthened forms of this notion have been considered in the 
literature which guarantee that $A$ has at most one zero (see e.g. \cite{G-Falset(05)}):

\begin{definition}\label{def-unacc}
\begin{enumerate}

\item Let $\psi:[0,\infty)\to [0,\infty)$ be a continuous function with $\psi(0)=0$ and $\psi(x)>0$ for $x>0$. Then an operator $A:D(A)\to 2^X$ is 
$\psi$-strongly accretive if
\begin{equation*}
\forall (x,u),(y,v)\in A\; \exists j\in J(x-y)\; (\pair{u-v,j}\geq \psi(\norm{x-y})\norm{x-y}).
\end{equation*}

\item Let $\phi:[0,\infty)\to [0,\infty)$ be a continuous function with $\phi(0)=0$ and $\phi(x)>0$ for $x>0$. Then an operator $A:D(A)\to 2^X$ is 
uniformly $\phi$-accretive if
\begin{equation*}
\forall (x,u),(y,v)\in A\; \exists j\in J(x-y)\; (\pair{u-v,j}\geq \phi(\norm{x-y})).
\end{equation*}

\end{enumerate}
\end{definition}
In the case of $\psi$-strongly accretive operators, $\psi$ is often assumed 
to be strictly increasing in addition (see e.g. \cite{Xu(2020)}). 
\\[1mm]
In \cite[Section 2.1]{KohKou(2015.0)} it has been exhibited that all what 
is needed to get a modulus of uniqueness for the property of being a 
zero of $A$ is the following (which is implied by the aforementioned 
concepts of $\psi,\phi$-accretivity):
\begin{definition}
  An accretive operator $A\hspace*{-1mm}:\hspace*{-1mm}D(A)\to 2^X$ is uniformly accretive  if
\begin{equation*}
(\ast) \ \left\{ \ \begin{aligned}
&\forall \varepsilon,K>0\; \exists \delta>0\; \forall (x,u),(y,v)\in A\\
&(\norm{x-y}\in [\varepsilon,K]\to \exists j\in J(x-y)\;(\pair{u-v,j}\geq \delta)).
\end{aligned} \right.
\end{equation*}
Any function $\Theta_{(\cdot)}(\cdot):(0,\infty)\times (0,\infty)\to (0,\infty)$ such that $\delta:=\Theta_K(\varepsilon)$ satisfies $(\ast)$ for all $\varepsilon,K>0$ is called a \emph{modulus of uniform accretivity} for $A$.
\end{definition}
\begin{remark} If $A$ is assumed to have a zero $q$, then the definition above 
can be modified to `uniform accretivity at zero' 
\begin{equation*}
(\ast\ast) \ \left\{ \ \begin{aligned}
&\forall \varepsilon,K>0\; \exists \delta>0\; \forall (x,u)\in A\\
&(\norm{x-q}\in [\varepsilon,K]\to \exists j\in J(x-q)\;(\pair{u,j}\geq \delta))
\end{aligned} \right.
\end{equation*}
which still is sufficient to get a modulus of uniqueness for the property 
of being a zero of $A$ (see {\rm \cite[Section 2.1]{KohKou(2015.0)}}).
\end{remark}
In \cite{BrezisSibony}, the following concept of uniform accretivity is 
introduced and shown to imply the uniqueness of zeroes if $X$ is a 
uniformly convex normed space (see also \cite{Gwinner(78)} and 
\cite{Xu(2020)} for results involving this and related notions):
\begin{definition}\label{definition-Brezis-Sibony}
$A$ is uniformly accretive in the sense of {\rm \cite{BrezisSibony}} 
if there is 
a strictly increasing function $\varphi:[0,\infty)\to\R$ with 
$\lim_{t\to\infty}\varphi(t)=\infty$ such that
\begin{equation*}
\forall (x,u),(y,v)\in A\; \exists j\in J(x-y)\; (\pair{u-v,j}\geq \left(\varphi
(\norm{x})-\varphi(\norm{y})\right)\cdot(\norm{x}-\norm{y})).
\end{equation*}
\end{definition}
Consider in the following a nonexpansive selfmapping $T:C\to C$ of a 
bounded convex subset $C\subset X$ of a normed space
$X$ such that $A:=I-T$ is an (single-valued) accretive operator with 
$D(A)=C.$ If $A$ admits a modulus of uniqueness for the property 
of being a zero, then the same modulus is also a modulus of uniqueness 
for being a fixed point of $T.$ We now extract from the uniqueness proof 
given in \cite{BrezisSibony} a modulus of uniqueness (again in the case of 
uniformly convex normed spaces) which only depends on a modulus function
$\Omega:(0,\infty)^2\to (0,\infty)$ such that 
\[ (+) \,\left\{ \ba{l}
\forall (x,u),(y,v)\in A\;\forall b,\varepsilon>0\; 
\exists j\in J(x-y)\\[1mm] \hspace*{2mm} 
 (\max\{ \| x\|,\| y\|\le b\wedge \left| \| x\| -\| y\|\right| \ge \varepsilon 
\to \pair{u-v,j} >\Omega(\varepsilon,b)). \ea \right. \]
\begin{proposition}
If $A$ is uniformly accretive in the sense of {\rm \cite{BrezisSibony}}, then 
there exists a modulus $\Omega$ satisfying $(+).$
\end{proposition}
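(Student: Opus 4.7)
The strategy is to construct $\Omega$ directly from $\varphi$, exploiting that the Brezis--Sibony inequality
\[ \pair{u-v,j} \;\geq\; (\varphi(\norm{x})-\varphi(\norm{y}))(\norm{x}-\norm{y}) \]
already provides a lower bound for $\pair{u-v,j}$ in terms of $\norm{x}, \norm{y}$; it remains only to bound the right-hand side uniformly from below whenever $\norm{x}, \norm{y} \in [0, b]$ with $|\norm{x} - \norm{y}| \geq \varepsilon$. Note that this right-hand side is symmetric in $(x, y)$ and always nonnegative since $\varphi$ is increasing, so it suffices to bound it below when the larger of $\norm{x}, \norm{y}$ exceeds the smaller by at least $\varepsilon$.

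For $b \geq \varepsilon$ I would define
\[
\omega(\varepsilon, b) \;:=\; \inf_{t \in [0, b - \varepsilon]}\bigl(\varphi(t+\varepsilon)-\varphi(t)\bigr), \qquad \Omega(\varepsilon, b) \;:=\; \tfrac{\varepsilon}{2}\, \omega(\varepsilon, b),
\]
whereas for $b < \varepsilon$ the premise of $(+)$ is vacuously false (since then $|\norm{x}-\norm{y}|\leq b < \varepsilon$), so $\Omega(\varepsilon, b)$ can be set to any positive value. The crux is then to prove $\omega(\varepsilon, b) > 0$ whenever $b \geq \varepsilon$.

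I would establish this positivity by contradiction, combining compactness with strict monotonicity. Assume $\omega(\varepsilon, b)=0$ and pick $t_n \in [0, b-\varepsilon]$ with $\varphi(t_n+\varepsilon)-\varphi(t_n) \to 0$. By Bolzano--Weierstrass some subsequence converges to $t^* \in [0, b-\varepsilon]$. Set $u := t^*+\varepsilon/3$ and $v := t^*+2\varepsilon/3$, so $t^* < u < v < t^*+\varepsilon$. For $n$ large enough, $t_n < u$ and $t_n+\varepsilon > v$, and monotonicity of $\varphi$ yields
\[ \varphi(t_n+\varepsilon) - \varphi(t_n) \;\geq\; \varphi(v) - \varphi(u) \;>\; 0, \]
where the final strict inequality uses $v > u$ together with the strict monotonicity of $\varphi$. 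This contradicts $\varphi(t_n+\varepsilon)-\varphi(t_n) \to 0$.

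To conclude, take $(x,u), (y,v) \in A$ with $\max\{\norm{x}, \norm{y}\}\leq b$ and $|\norm{x}-\norm{y}|\geq\varepsilon$ (which forces $b\geq\varepsilon$). Writing $t := \min\{\norm{x},\norm{y}\}$ and $s := \max\{\norm{x},\norm{y}\}$, we have $t \in [0, b-\varepsilon]$ and $s \geq t+\varepsilon$, so by monotonicity
\[ \varphi(s) - \varphi(t) \;\geq\; \varphi(t+\varepsilon) - \varphi(t) \;\geq\; \omega(\varepsilon, b). \]
Combined with $|\norm{x}-\norm{y}| = s-t\geq\varepsilon$, the Brezis--Sibony inequality produces some $j \in J(x-y)$ satisfying $\pair{u-v,j} \geq \varepsilon\,\omega(\varepsilon, b) > \Omega(\varepsilon, b)$, as required. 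The main obstacle is exactly the positivity of $\omega(\varepsilon, b)$, which is nontrivial because $\varphi$ is not assumed continuous; the argument succeeds by combining compactness of $[0, b-\varepsilon]$ with the availability of interior reference points $u, v \in (t^*, t^*+\varepsilon)$ at which strict monotonicity of $\varphi$ can be invoked.
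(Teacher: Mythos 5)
Your proposal is correct and follows essentially the same route as the paper: both reduce $(+)$ to a uniform positive lower bound on $\varphi(t+\varepsilon)-\varphi(t)$ over a compact interval, and both prove that positivity by contradiction, passing to a limit point of a minimizing sequence and invoking strict monotonicity of $\varphi$ at the interior points $t^*+\varepsilon/3$ and $t^*+2\varepsilon/3$. Your minor variations (infimum over $[0,b-\varepsilon]$, the factor $\tfrac{\varepsilon}{2}$ to secure the strict inequality, and the vacuous case $b<\varepsilon$) are only cosmetic.
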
 
\begin{proof} Let $\varphi$ be as in Definition \ref{definition-Brezis-Sibony}.
Let $b,\varepsilon >0.$ It suffices to show that 
\[ \exists \delta>0 \,\forall x\in [0,b]\,(\varphi(x+\varepsilon)-\varphi(x)>\delta) \]
since $\varphi(y)\ge \varphi(x+\varepsilon)$ for $y\ge x+\varepsilon$ and we can then take $\Omega(\eps,b):=\eps\cdot\delta$.
Suppose otherwise, i.e. 
\[ \forall n\in\N \,\exists x_n\in [0,b]\, (\varphi(x_n+\varepsilon)-\varphi(x_n)\le 
\frac{1}{n+1}). \]
Let $x$ be a limit point of $(x_n).$ Then for arbitrary large $n$ we have $x_n\le 
x+\frac{\varepsilon}{3}$ and $x+\frac{2\varepsilon}{3}\le x_n+\varepsilon$ and 
so 
\[ \varphi\left( x+\frac{2\varepsilon}{3}\right) -\varphi\left( x+\frac{\varepsilon}{3}\right) 
\le \varphi (x_n+\varepsilon)-\varphi (x_n)\le\frac{1}{n+1}. \]
Thus $\varphi\left(x+\frac{2\varepsilon}{3}\right) -\varphi\left( x+\frac{\varepsilon}{3}\right)\le 0$ 
which contradicts $\varphi$ being strictly increasing. 
\end{proof} 
In this section, we will compute a modulus of uniqueness for approximate fixed points 
of $T=I-A$ in terms of a given modulus of uniform convexity for $X$ and a modulus 
$\Omega$ for $A.$ This then gives us explicit rates of convergence for Browder-type 
sequences, Krasnoselskii-Mann iterations and Halpern iterations of $T$ and so - by the 
results in the previous section - also such rates for the viscosity versions of 
these algorithms. \\
In the following $C\subset X$ is a bounded convex subset of a uniformly 
convex normed space $(X,\|\cdot\|).$ Let $b\ge \| x\|$ for all $x\in C$ and 
$\eta:(0,2]\to (0,1]$ be a modulus 
of uniform convexity for $X,$ i.e. 
\[ \forall \varepsilon\in (0,2] \,\forall x,y\in X \ \left(
\| x\|,\|y\|\le 1\,\wedge\, \| x-y\|\ge \varepsilon\to 
\left\| \frac{1}{2} (x+y)\right\| \le 1-\eta(\varepsilon)\right).\]
Note that this implication holds trivially if $\varepsilon >2$ and so 
we can extend $\eta$ to $(0,\infty)$ e.g. by putting it $:=1$ for 
$\varepsilon>2.$ \\ 
$T:C\to C$ is a nonexpansive mapping such that $A:=I-T$ is uniformly 
accretive in the sense of $(+)$ above with a respective modulus $\Omega$. In the following, we write $\Omega(\eps):=\Omega(\eps, b)$.

\begin{lemma}\label{lemma1}
Let $x_1,x_2\in C$ be $\frac{\Omega(\varepsilon)}{4b}$-approximate fixed 
points of $T.$ Then $\left| \| x_1\| -\| x_2\|\right| \le \varepsilon.$
\end{lemma}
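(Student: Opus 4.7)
The plan is a straightforward proof by contradiction, leveraging condition $(+)$ together with the Cauchy--Schwarz-type estimate for the duality map.

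First, I would assume toward a contradiction that $\left|\|x_1\|-\|x_2\|\right|>\varepsilon$. Since $x_1,x_2\in C$ and $b$ bounds $\|\cdot\|$ on $C$, the hypotheses of $(+)$ are met: we have $\max\{\|x_1\|,\|x_2\|\}\le b$ and $\left|\|x_1\|-\|x_2\|\right|\ge\varepsilon$. Applied to the accretive operator $A=I-T$, with $u:=x_1-Tx_1$ and $v:=x_2-Tx_2$, condition $(+)$ delivers some $j\in J(x_1-x_2)$ with
\[
\langle (x_1-Tx_1)-(x_2-Tx_2),\,j\rangle \;>\; \Omega(\varepsilon,b)=\Omega(\varepsilon).
\]

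Next I would derive the matching upper bound by combining the standard duality estimate $|\langle z,j\rangle|\le\|z\|\cdot\|j\|$ with $\|j\|=\|x_1-x_2\|\le 2b$ and
\[
\|(x_1-Tx_1)-(x_2-Tx_2)\|\;\le\;\|x_1-Tx_1\|+\|x_2-Tx_2\|\;\le\;\frac{\Omega(\varepsilon)}{2b},
\]
using that both $x_1,x_2$ are $\tfrac{\Omega(\varepsilon)}{4b}$-approximate fixed points. Multiplying these bounds yields $\langle u-v,j\rangle\le\Omega(\varepsilon)$, which contradicts the strict inequality from $(+)$.

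There is no real obstacle here: the only subtlety is getting the constant $4b$ correct, which is dictated by combining the factor $2b$ from $\|j\|$ with the factor $2$ from the triangle inequality on $u-v$. The role of the lemma is to be the first quantitative input (depending on $\Omega$ and on the diameter bound $b$, but not yet on the modulus $\eta$ of uniform convexity); the modulus of uniform convexity will presumably enter in the next step, where approximate equality of norms $\|x_1\|\approx\|x_2\|$ is upgraded to approximate equality of $x_1$ and $x_2$ themselves.
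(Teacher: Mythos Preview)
Your proof is correct and follows essentially the same approach as the paper: assume $\left|\|x_1\|-\|x_2\|\right|>\varepsilon$, invoke $(+)$ to get $\langle Ax_1-Ax_2,j\rangle>\Omega(\varepsilon)$, and then bound the same pairing from above by $\|Ax_1-Ax_2\|\cdot\|x_1-x_2\|\le\frac{\Omega(\varepsilon)}{2b}\cdot 2b=\Omega(\varepsilon)$ using $\|j\|=\|x_1-x_2\|\le 2b$ and the triangle inequality on $Ax_1-Ax_2$. The paper presents this chain of inequalities in a single displayed line, but the content is identical to what you wrote.
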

\begin{proof} 
By assumption we have $\| A(x_1)\|,\| A(x_2)\|\le 
\frac{\Omega(\varepsilon)}{4b}.$ W.l.o.g $\| x_1\|\ge \| x_2\|.$ 
Let $j\in J(x_1-x_2)$ be as in $(+).$ Assume that $\| x_1\| -\| x_2\| >\eps.$ 
Then 
\[ \ba{l} \Omega(\varepsilon)=\frac{\Omega(\varepsilon)}{2b}\cdot 2b\ge 
\left( \| Ax_1\|+\| Ax_2\|\right)\cdot \| x_1-x_2\|
\\ \hspace*{2.4cm}
\ge \| Ax_1-Ax_2\|\cdot\| x_1-x_2\|\ge \langle Ax_1-Ax_2,j\rangle 
>\Omega(\varepsilon). \ea \]
Hence $\left| \| x_1\|-\| x_2\|\right| \le\eps.$ 
\end{proof}
\begin{lemma}\label{lemma2}
If $x_1,x_2\in C$ are $\frac{\eps\cdot\eta(\eps/2b)}{4}$-approximate fixed 
points of $T,$ then $\frac{x_1+x_2}{2}$ is an $\eps$-approximate fixed 
point of $T.$ \\ 
If $\eta(\varepsilon)$ can be written as $\varepsilon\cdot \tilde{\eta}(\varepsilon)$ with $0<\varepsilon_1\le\varepsilon_2\leq 2\to \tilde{\eta}(\varepsilon_1)
\le \tilde{\eta}(\varepsilon_2),$ then we can replace $\frac{\eps\cdot\eta(\eps/2b)}{4}$ by 
$\frac{\eps}{2}\cdot\tilde{\eta}(\eps/2b)=b\cdot\eta(\eps/2b).$ 

\end{lemma}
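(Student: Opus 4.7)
I will apply uniform convexity to the pair of vectors $u := x_1 - Tm$ and $v := x_2 - Tm$, where $m := (x_1+x_2)/2$. Writing $d := \|x_1 - x_2\|$ and $\delta := \eps\eta(\eps/(2b))/4$, the nonexpansivity of $T$ together with the triangle inequality yields
$$\|x_i - Tm\| \le \|x_i - Tx_i\| + \|Tx_i - Tm\| \le \delta + \|x_i - m\| = \delta + d/2$$
for $i=1,2$, so $\|u\|,\|v\| \le R$ where $R := d/2 + \delta$. Observe also that $\|u-v\| = \|x_1 - x_2\| = d$ and $(u+v)/2 = m - Tm$.

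I would then case-split on the size of $R$. If $R \le \eps$, then directly $\|m - Tm\| = \|(u+v)/2\| \le R \le \eps$ and there is nothing more to prove. If $R > \eps$, then $d > 2(\eps - \delta) \ge \eps$, using $\delta \le \eps/4$ (since $\eta$ takes values in $(0,1]$). Applying the uniform convexity of $X$ to $u/R,\,v/R$ in the closed unit ball then gives the crucial estimate
$$\|m - Tm\| \le R\bigl(1 - \eta(d/R)\bigr).$$
The choice of $\delta$ is calibrated precisely so that $d/R = 2 - 2\delta/R \ge 2 - \eta(\eps/(2b))/2$ in this regime, whereupon monotonicity of $\eta$ lower-bounds $\eta(d/R)$; combined with $R \le b + \delta$ this should close out $R(1-\eta(d/R)) \le \eps$.

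For the \emph{second} statement, writing $\eta(\eps) = \eps\cdot\tilde\eta(\eps)$ with $\tilde\eta$ nondecreasing on $(0,2]$ is equivalent to $\eps \mapsto \eta(\eps)/\eps$ being nondecreasing. This permits the scaling $\eta(d/R) \ge (d/R)\cdot\tilde\eta(\eps/(2b))$ whenever $d/R \ge \eps/(2b)$ (which is automatic here, as $d/R \ge 1$ and $\eps \le 2b$ may be assumed). The extra factor of $d/R$ gained in this way is exactly what allows the weaker requirement $\delta = b\eta(\eps/(2b)) = (\eps/2)\tilde\eta(\eps/(2b))$ to suffice.

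\textbf{Main obstacle.} The technical crux is the final numerical verification $R(1 - \eta(d/R)) \le \eps$ in the nontrivial case. The prefactor $R$ can be as large as $b + \delta$, while $1 - \eta(d/R)$ vanishes only as $d/R \to 2$; the case split $R \le \eps$ is precisely what ensures that when the UC estimate is invoked, the ratio $\delta/R$ is small enough to push $d/R$ near $2$. Balancing these two contributions against the prescribed form of $\delta$ (and, in the second statement, leveraging the monotonicity of $\tilde\eta$) is where the argument becomes delicate, and the constants $1/4$ and $\eps/(2b)$ appearing in the hypothesis on $\delta$ are tuned exactly for this balance.
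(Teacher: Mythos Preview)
Your approach has a genuine gap at the ``final numerical verification'' you flag as the crux. The estimate $\|m-Tm\|\le R\bigl(1-\eta(d/R)\bigr)$ is correct, but it is simply too weak to conclude $\|m-Tm\|\le\eps$. The problem is that a modulus of uniform convexity is \emph{not} required to satisfy $\eta(t)\to 1$ as $t\to 2$, nor to be monotone; it merely has to witness the defining implication. Take for instance the valid Hilbert-space modulus $\eta(t)=t^2/8$. Then
\[
R\bigl(1-\eta(d/R)\bigr)=R-\frac{d^2}{8R}=\frac{8R^2-d^2}{8R}
=\frac{(2R-d)(2R+d)+d^2}{8R}\ge\frac{d^2}{8R}\approx\frac{d}{4},
\]
which can be as large as $b/2$ and is certainly not $\le\eps$. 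Your sentence ``monotonicity of $\eta$ lower-bounds $\eta(d/R)$; combined with $R\le b+\delta$ this should close out $R(1-\eta(d/R))\le\eps$'' therefore does not go through: even with monotone $\eta$ and $d/R$ arbitrarily close to $2$, the factor $R(1-\eta(d/R))$ is of order $R$, not $\eps$. The same failure occurs in your treatment of the second part: the bound $R-d\,\tilde\eta(\eps/(2b))$ you obtain equals $d/2+\delta-d\,\tilde\eta(\eps/(2b))$, which for large $d$ is $\le\eps$ only if $\tilde\eta(\eps/(2b))\gtrsim 1/2$, again not guaranteed.

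The paper's proof avoids this by applying uniform convexity to a \emph{different} pair: instead of centering at $Tm$ and taking $u=x_1-Tm$, $v=x_2-Tm$ (so that $\|u-v\|=d$ is known and $\|(u+v)/2\|=\|m-Tm\|$ is to be bounded), one centers at $x_1$ and takes $u=x_1-m$, $v=x_1-Tm$. Now the roles are swapped: $\|u-v\|=\|m-Tm\|$ is the quantity of interest, and one shows the \emph{midpoint} $\|x_1-(m+Tm)/2\|$ is within $\frac{\eps\,\eta(\eps/2b)}{2}$ of the radius $K:=\max\{\|x_1-m\|,\|x_1-Tm\|\}\le 2b$. The contrapositive of the uniform-convexity condition with parameter $\eps/(2b)$ then yields $\|m-Tm\|<K\cdot\frac{\eps}{2b}\le\eps$ directly, with no need for monotonicity of $\eta$ or any control on $\eta$ near $2$. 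The estimate $\|x_1-(m+Tm)/2\|\ge K-\frac{\eps\,\eta(\eps/2b)}{2}$ comes from combining $\|x_1-Tm\|\le\|x_1-m\|+\delta_0$ (your own computation) with the symmetric bound $\|x_2-(m+Tm)/2\|\le\|x_2-m\|+\delta_0/2$ and the identity $\|x_1-m\|=\|x_2-m\|$.
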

\begin{proof} The proof follows the pattern of the proofs of Lemmas 2.2 and 
2.3 in \cite{Kohlenbach(11)}.  Claim: 
\[ \forall \eps>0\,\forall a,x,y\in C\,\left( \left\| a-\frac{x+y}{2}\right\| 
>K-\frac{\eps\cdot\eta(\eps/2b)}{2} \to \| x-y\| <\eps\right), \] 
where $K:=\max\{ \| a-x\|,\| a-y\|\}\le 2b.$ \\
Proof of claim: w.l.o.g we may assume that $K\ge \eps/2,$ since, otherwise, 
$\| x-y\| < \frac{\eps}{2}+\frac{\eps}{2}=\eps.$ Consider 
\[ \tilde{x}:=\frac{a-x}{K}, \tilde{y}:=\frac{a-y}{K}.\] Then 
$\tilde{x},\tilde{y}\in \overline{B}_1(0).$ Assume that 
\[  \left\| a-\frac{x+y}{2}\right\| 
>K-\frac{\eps\cdot\eta(\eps/2b)}{2} \ge K-K\cdot\eta(\eps/2b) \] and so 
\[ \left\|\frac{\tilde{x}+\tilde{y}}{2}\right\| =\frac{1}{K}
 \left\| a-\frac{x+y}{2}\right\|  >1-\eta(\eps/2b). \]
Then by the definition of $\eta$ 
\[ \frac{1}{K} \| x-y\| =\| \tilde{x}-\tilde{y}\| <\frac{\eps}{2b}\le
\frac{\eps}{K}. \]
Proof of the lemma: by assumption 
\[ (1) \ \bigwedge^2_{i=1} \| x_i-Tx_i\| \le \frac{\eps\cdot\eta(\eps/2b)}{4}.\]
Define $x:=\frac{x_1+x_2}{2}.$ 
For $i=2,$ we get 
\[ (2) \ \left\{\ba{l} \left\| x_2-\frac{x+Tx}{2}\right\| \le \frac{1}{2}
\| x_2-x\| +\frac{1}{2}\| x_2-Tx\|\\ \le
 \frac{1}{2}
\| x_2-x\| +\frac{1}{2}\| Tx_2-Tx\|+\frac{\eps\cdot\eta(\eps/2b)}{8} \\
\le
 \frac{1}{2}
\| x_2-x\| +\frac{1}{2}\| x_2-x\|+\frac{\eps\cdot\eta(\eps/2b)}{8}
= \| x_2-x\| +\frac{\eps\cdot\eta(\eps/2b)}{8}. \ea \right. \]
This implies that 
\[ (3) \ \left\| x_1-\frac{x+Tx}{2}\right\| \ge \| x_1-x\| 
-\frac{\eps\cdot\eta(\eps/2b)}{8}, \]
since, otherwise, we get the contradiction  
\[ \ba{l} 
\| x_1-x_2\| \le  \left\| x_1-\frac{x+Tx}{2}\right\|
+   \left\| x_2-\frac{x+Tx}{2}\right\| \\ <
\| x_1-x\| -\frac{\eps\cdot\eta(\eps/2b)}{8} +\| x_2-x\| +\frac{\eps\cdot\eta(\eps/2b)}{8} \\ =\frac{1}{2}\| x_1-x_2\| +\frac{1}{2}\| x_1-x_2\| =
\| x_1-x_2\|. \ea \]
By $(1)$ applied to $i=1$ we also have 
\[ (4) \ \| x_1-Tx\|\le \| Tx_1-Tx\| +\frac{\eps\cdot\eta(\eps/2b)}{4}
\le \| x_1-x\| +\frac{\eps\cdot\eta(\eps/2b)}{4}. \] 
With $K:=max\{ \| x_1-x\|,\| x_1-Tx\|\le 2b,$ $(3)$ and $(4)$ yield that 
\[ \left\| x_1-\frac{x+Tx}{2}\right\| \ge \| x_1-x\|-\frac{\eps\cdot\eta(\eps/2b)}{8} \ge K-\frac{\eps\cdot\eta(\eps/2b)}{4}  
-\frac{\eps\cdot\eta(\eps/2b)}{8}>K-\frac{\eps\cdot\eta(\eps/2b)}{2}. \] 
The 1st part of the 
lemma now follows by the claim with $x_1, x, Tx$ as $a,x,y.$  \\ 
For the 2nd part, one first has to observe that the proof of the claim also establishes the claim for 
\[ K\cdot\eta(\eps/K) =\eps\cdot \tilde{\eta}(\eps/K)\ge 
 \eps\cdot \tilde{\eta}(\eps/2b)\] instead of 
$\frac{\eps}{2}\cdot \eta(\eps/2b).$ 
Using this,  
\[ (1)' \ \bigwedge^2_{i=} \| x_i-Tx_i\| \le 
\frac{\eps}{2} \cdot \tilde{\eta}(\eps/2b) \]
now implies that $\| x-Tx\|<\eps$ as in the proof above.
\end{proof}

The next lemma is a slight adaption of Lemma 3.2 in \cite{KohLeu09} 
(we include the brief proof for completeness):
\begin{lemma}\label{lemma3}
If $x_1,x_2\in C$ with $\| x_1\|\ge \| x_2\|$ and $\| x_1-x_2\| >\eps,$ 
then $\left\| \frac{x_1+x_2}{2}\right\| <\| x_1\| -\beta(b,\eps),$ where 
\[ \beta(b,\eps):=\frac{\eps}{2}\cdot \eta(\eps/b). \]
If $\eta(\varepsilon)$ can be written as $\varepsilon\cdot \tilde{\eta}(\varepsilon)$ with $0<\varepsilon_1\le\varepsilon_2\leq 2\to \tilde{\eta}(\varepsilon_1)
\le \tilde{\eta}(\varepsilon_2),$ then we can replace $\beta$ by 
$\beta'(b,\eps):=\eps\cdot\tilde{\eta}(\eps/b)=b\cdot\eta(\eps/b).$ 
\end{lemma}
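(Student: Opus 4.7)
The plan is to reduce to the standard unit-ball formulation of uniform convexity by rescaling both vectors by the common factor $K := \|x_1\|$, so that they lie in the closed unit ball of $X$. The preliminary observation that both makes the rescaling legitimate and supplies the quantitative gain is that $\|x_1\| > \varepsilon/2$; this follows from the hypothesis $\|x_2\|\le\|x_1\|$ together with the triangle inequality: $\varepsilon<\|x_1-x_2\|\le\|x_1\|+\|x_2\|\le 2\|x_1\|$. In particular $K\in(\varepsilon/2,b]$.

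Setting $\tilde x_i := x_i/K$ gives $\|\tilde x_1\|,\|\tilde x_2\|\le 1$ and $\|\tilde x_1 - \tilde x_2\| = \|x_1-x_2\|/K > \varepsilon/K \ge \varepsilon/b$. Invoking the defining implication of $\eta$ at the value $\varepsilon/b$ (no monotonicity of $\eta$ is needed here, since the axiom applies at any threshold $\le \|\tilde x_1 - \tilde x_2\|$) yields $\bigl\|\tfrac{\tilde x_1+\tilde x_2}{2}\bigr\|\le 1-\eta(\varepsilon/b)$; multiplying by $K$ then gives $\bigl\|\tfrac{x_1+x_2}{2}\bigr\|\le \|x_1\| - K\eta(\varepsilon/b)$. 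The strict bound $K>\varepsilon/2$, combined with the positivity of $\eta(\varepsilon/b)$, sharpens this to $\bigl\|\tfrac{x_1+x_2}{2}\bigr\|<\|x_1\|-\tfrac{\varepsilon}{2}\eta(\varepsilon/b)=\|x_1\|-\beta(b,\varepsilon)$.

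For the refinement under the additional hypothesis, I would apply uniform convexity at the tighter value $\varepsilon/K$ instead of $\varepsilon/b$. The key algebraic cancellation is then $K\cdot\eta(\varepsilon/K)=K\cdot(\varepsilon/K)\tilde\eta(\varepsilon/K)=\varepsilon\tilde\eta(\varepsilon/K)$, which absorbs the factor $K$ into the argument of $\tilde\eta$. Monotonicity of $\tilde\eta$ together with $\varepsilon/K\ge\varepsilon/b$ gives $\varepsilon\tilde\eta(\varepsilon/K)\ge\varepsilon\tilde\eta(\varepsilon/b)$, and the numerical identity $\varepsilon\tilde\eta(\varepsilon/b)=b\eta(\varepsilon/b)$ delivers the advertised form of $\beta'$. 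The virtue of this formulation is that only $\tilde\eta$, rather than $\eta$ itself, needs to be monotone, which is exactly what the hypothesis supplies.

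There is no real obstacle; the argument is a one-step scaling reduction to the axiom of uniform convexity. The only minor care is the strict vs.\ non-strict bookkeeping: in the first part strictness comes from $K>\varepsilon/2$ being strict, and in the refined case it is obtained by applying the axiom at a value strictly greater than $\varepsilon/K$ (which is available since $\|\tilde x_1-\tilde x_2\|>\varepsilon/K$ strictly) and exploiting that $\tilde\eta$ is positive.
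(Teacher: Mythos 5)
Your proof is correct and follows essentially the same route as the paper: rescale by $K=\|x_1\|$, use $\|x_1\|>\eps/2$ to obtain the strict gain $\frac{\eps}{2}\eta(\eps/b)$, and for the refinement apply the modulus at $\eps/K$ so that $K\eta(\eps/K)=\eps\tilde\eta(\eps/K)\ge\eps\tilde\eta(\eps/b)=b\eta(\eps/b)$. Your bookkeeping of the strict inequality in the second part is in fact slightly more careful than the paper's, which simply asserts strictness at that step.
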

\begin{proof} 
 $\| x_1\|\ge \| x_2\|$ and $\| x_1-x_2\| >\eps$ implies that 
\[ (1) \ \| x_1\| >\frac{\eps}{2}>0.\]
Define $\tilde{x}_1:=\frac{x_1}{\| x_1\|}, \tilde{x}_2:=\frac{x_2}{\| x_1\|}.$
Then $\|\tilde{x}_2\|\le \| \tilde{x}_1\| =1$ and 
\[ (2) \ \| \tilde{x}_1 -\tilde{x}_2\| =\frac{1}{\| x_1\|} \| x_1-x_2\| >
\frac{\eps}{\| x_1\|}\ge \frac{\eps}{b}.\]
Since $\eta$ is a modulus of uniform convexity we get
\[ (3) \  \left\| \frac{x_1+x_2}{2}\right\| =\| x_1\|\cdot\left\| 
\frac{\tilde{x}_1+\tilde{x}_2}{2}\right\| \le \| x_1\| -\| x_1\|\cdot 
\eta(\eps/b) 
\stackrel{(1)}{<} \| x_1\| -\frac{\eps}{2}\cdot \eta(\eps/b).  \]
The 2nd claim follows from 
\[ (3)' \ \left\{ \ba{l}  \left\| \frac{x_1+x_2}{2}\right\| =\| x_1\|\cdot\left\| 
\frac{\tilde{x}_1+\tilde{x}_2}{2}\right\| < \| x_1\| -\| x_1\|\cdot \eta(\eps/\| x_1\|) \\ 
= \| x_1\| -\eps\cdot \tilde{\eta}(\eps/
\| x_1\| )\le \| x_1\| -\eps\cdot \tilde{\eta}(\eps/b). \qedhere \ea 
\right. \]
\end{proof}
\begin{proposition}\label{modulus-of-uniqueness}
Let 
\[ \omega_b(\eps):=\frac{1}{16b}\Omega\left( \frac{\eps}{2}\cdot\eta\left( \frac{\eps}{b}
\right)\right)\cdot\eta\left(\frac{1}{8b^2}\Omega\left( \frac{\eps}{2}\cdot 
\eta\left( \frac{\eps}{b}
\right)\right)\right). \]
If $x_1,x_2\in C$ are $\omega_b(\eps)$-approximate fixed points of $T,$ then $\| x_1-x_2\|\le\eps,$
i.e. $\omega_b$ is a modulus of uniqueness for being a fixed point of $T.$
\\ If $\eta(\eps)=\eps\cdot \tilde{\eta}(\eps)$ with $\tilde{\eta}$ as 
in Lemma \ref{lemma3}, then we can improve $\omega_b$ to 
\[ \omega_b(\eps):=\min\left\{ 
b\cdot\eta\left( \frac{\Omega\left( b\cdot 
\eta\left( \frac{\eps}{b}\right)\right)}{8b^2}\right),
\frac{\Omega\left( b\cdot 
\eta\left( \frac{\eps}{b}\right)\right)}{4b}\right\}. \]
\end{proposition}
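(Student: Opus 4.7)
The strategy is a contradiction argument that chains Lemmas \ref{lemma1}, \ref{lemma2} and \ref{lemma3}. Assume $x_1,x_2\in C$ are $\omega_b(\eps)$-approximate fixed points and, towards a contradiction, that $\|x_1-x_2\|>\eps$. Without loss of generality $\|x_1\|\ge \|x_2\|$. The plan is to (i) use Lemma \ref{lemma3} to force a definite gap between $\|x_1\|$ and $\|(x_1+x_2)/2\|$; (ii) use Lemma \ref{lemma2} to show that $(x_1+x_2)/2$ is a sufficiently good approximate fixed point of $T$; and (iii) apply Lemma \ref{lemma1} to $x_1$ and $(x_1+x_2)/2$ to conclude that their norms must be close, contradicting (i).

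First I would set $\eps_0:=\tfrac{\eps}{2}\eta(\eps/b)$. Lemma \ref{lemma3} then gives
\[ \left\| \tfrac{x_1+x_2}{2}\right\| < \|x_1\|-\eps_0, \]
so in particular $\bigl|\|x_1\|-\|(x_1+x_2)/2\|\bigr|>\eps_0$. Next I would verify that the assumption $\|x_i-Tx_i\|\le \omega_b(\eps)$ is tight enough to feed Lemma \ref{lemma2} with target precision $\delta:=\Omega(\eps_0)/(4b)$: indeed, Lemma \ref{lemma2} requires $x_1,x_2$ to be $\tfrac{\delta\cdot\eta(\delta/(2b))}{4}$-approximate fixed points, and by the definition of $\omega_b$ we have exactly
\[ \omega_b(\eps)=\tfrac{\Omega(\eps_0)}{16b}\cdot\eta\!\left(\tfrac{\Omega(\eps_0)}{8b^2}\right)=\tfrac{\delta\cdot\eta(\delta/(2b))}{4}. \]
So Lemma \ref{lemma2} yields that $(x_1+x_2)/2$ is a $\delta$-approximate fixed point, and since trivially $\omega_b(\eps)\le\delta$ (because $\eta\le 1$), $x_1$ is a $\delta$-approximate fixed point as well.

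Applying Lemma \ref{lemma1} to $x_1$ and $(x_1+x_2)/2$ with precision $\eps_0$ now gives $\bigl|\|x_1\|-\|(x_1+x_2)/2\|\bigr|\le \eps_0$, directly contradicting the strict inequality from step (i). Hence $\|x_1-x_2\|\le\eps$ as required. For the improved variant under the assumption $\eta(\eps)=\eps\tilde\eta(\eps)$ with $\tilde\eta$ monotone, the same three-step contradiction is carried out but with $\eps_0$ replaced by $b\eta(\eps/b)$ (the sharper constant from the second part of Lemma \ref{lemma3}) and with the sharper precision $b\cdot\eta(\delta/(2b))$ from the second part of Lemma \ref{lemma2}; the resulting calibration matches the improved formula for $\omega_b(\eps)$. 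The only real obstacle here is bookkeeping of the constants, since each of the three lemmas introduces its own dependence on $b$, $\eps$ and $\eta$; the definition of $\omega_b(\eps)$ is, in fact, engineered precisely so that the chained estimates close up.
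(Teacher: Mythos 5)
Your proposal is correct and follows essentially the same route as the paper: a contradiction argument with $\|x_1-x_2\|>\eps$, applying Lemma \ref{lemma2} (with target precision $\delta=\Omega(\tfrac{\eps}{2}\eta(\eps/b))/(4b)$, noting $\omega_b(\eps)\le\delta$ since $\eta\le 1$) to make $\tfrac{x_1+x_2}{2}$ a $\delta$-approximate fixed point, then Lemma \ref{lemma1} to bound $\|x_1\|-\|\tfrac{x_1+x_2}{2}\|$ by $\tfrac{\eps}{2}\eta(\eps/b)$, contradicting Lemma \ref{lemma3}; your calibration of constants matches the paper's exactly, and your sketch of the improved variant via the second claims of Lemmas \ref{lemma2} and \ref{lemma3} is likewise the paper's argument.
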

\begin{proof} 
W.l.o.g. $\| x_1\|\ge \| x_2\|.$ Assume that $\| x_1-x_2\|>\eps.$ By 
Lemma \ref{lemma2} $\frac{x_1+x_2}{2}$ (and by assumption also $x_1$ 
since $\eta(\ldots)\le 1$) is a $\frac{1}{4b}\Omega\left(
\frac{\eps}{2}\cdot\eta\left(\frac{\eps}{b}\right)\right)$-approximate 
fixed point of $T.$ Hence by Lemma \ref{lemma1} (applied to 
$\frac{x_1+x_2}{2}$ as $x_2$) 
\[ \| x_1\| -\left\| \frac{x_1+x_2}{2}\right\| \le \frac{\eps}{2}\cdot\eta
\left(\frac{\eps}{b}\right). \] 
By Lemma \ref{lemma3}, on the other hand, we have that 
\[ \left\| \frac{x_1+x_2}{2}\right\| <\| x_1\| -\frac{\eps}{2}\cdot \eta\left( \frac{\eps}{b}\right)  \] 
which gives a contradiction. 
The proof of the 2nd claim is analogous making use of the 2nd claims 
in lemmas \ref{lemma2} and \ref{lemma3}.
\end{proof}
A first application of Proposition \ref{modulus-of-uniqueness} gives a 
Cauchy rate for the path $(x_a)_{\alpha\in (0,1)}$ of points in $C$ such 
that (for given $c\in C$)  $x_{\alpha}=(1-\alpha)T(x_{\alpha})+\alpha c.$
\begin{theorem}
Let $C,X,\eta,b,T,\Omega$ be as before. Then 
\[ \forall \eps>0\,\forall \alpha_1,\alpha_1 \in (0,1)\ \left(\bigwedge^2_{i=1} 
\alpha_i\le \frac{\omega_b(\eps)}{2b}\to \| x_{\alpha_1}-x_{\alpha_2}\| \le
\eps\right). \] 
\end{theorem}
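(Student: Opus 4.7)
The plan is to apply the modulus of uniqueness $\omega_b$ from Proposition \ref{modulus-of-uniqueness} directly. The only thing to check is that each $x_{\alpha_i}$ is an $\omega_b(\varepsilon)$-approximate fixed point of $T$ once $\alpha_i$ is small enough, and the defining equation $x_\alpha = (1-\alpha)T(x_\alpha) + \alpha c$ makes this a one-line computation.

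\textbf{Step 1 (defect estimate).} From the defining equation, I will rearrange to get
\[
x_\alpha - T(x_\alpha) = \alpha\bigl(c - T(x_\alpha)\bigr),
\]
whence $\|x_\alpha - T(x_\alpha)\| = \alpha\,\|c - T(x_\alpha)\| \le 2b\alpha$, using that $c, T(x_\alpha) \in C$ and $\|x\| \le b$ for all $x \in C$ (so in particular the diameter is at most $2b$).

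\textbf{Step 2 (apply the modulus).} Assume $\alpha_1, \alpha_2 \in (0,1)$ satisfy $\alpha_i \le \omega_b(\varepsilon)/(2b)$. Then by Step 1, $\|x_{\alpha_i} - T(x_{\alpha_i})\| \le 2b \cdot \alpha_i \le \omega_b(\varepsilon)$ for $i=1,2$. Hence both $x_{\alpha_1}$ and $x_{\alpha_2}$ are $\omega_b(\varepsilon)$-approximate fixed points of $T$.

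\textbf{Step 3 (conclude).} Proposition \ref{modulus-of-uniqueness} now yields $\|x_{\alpha_1} - x_{\alpha_2}\| \le \varepsilon$, which is exactly what we wanted to prove. There is essentially no obstacle: the content of the theorem lies in Proposition \ref{modulus-of-uniqueness}, and all that remains is to translate the smallness condition on $\alpha$ into the approximate-fixed-point condition that the modulus of uniqueness accepts as input. The only mild care needed is the bound $\|c - T(x_\alpha)\| \le 2b$, which follows from $C$ being contained in the ball of radius $b$ around the origin.
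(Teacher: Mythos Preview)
Your proof is correct and follows essentially the same approach as the paper: compute $\|x_\alpha - T(x_\alpha)\| = \alpha\|c - T(x_\alpha)\| \le 2b\alpha$ from the defining equation, use the hypothesis $\alpha_i \le \omega_b(\eps)/(2b)$ to conclude both points are $\omega_b(\eps)$-approximate fixed points, and apply Proposition~\ref{modulus-of-uniqueness}.
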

\begin{proof} Let for $i=1,2$ be $\alpha_i\in (0,1)$ be such that $\alpha_i 
\le\frac{\omega_b(\eps)}{2b}.$ Then
\begin{align*}
	\| T(x_{\alpha_i})-x_{\alpha_i}\| &=\| T(x_{\alpha_i}) -T(x_{\alpha_i})
	+\alpha_iT(x_{\alpha_i})-\alpha_i c\|\\
	&=\alpha_i\| T(x_{\alpha_i})-c\|\le
	\alpha_i \cdot 2b\le\omega_b(\eps).
\end{align*}
Hence by Proposition \ref{modulus-of-uniqueness} we get that 
$\| x_{\alpha_1}-x_{\alpha_2}\|\le \eps.$
\end{proof}

As another application of Proposition \ref{modulus-of-uniqueness} we get 
the following quantitative form of Theorem 1 in \cite{Xu(2020)}:
\begin{theorem}
Let $C,X,\eta,b,T,\Omega$ be as before. Let $(\beta_n)\subset [0,1]$ be 
such that $\sum^{\infty}_{n=0} \beta_n(1-\beta_n)=\infty$ with a rate of divergence 
$\gamma.$ Define for $x_0\in C$
\[ x_{n+1}:=(1-\beta_n)x_n+\beta_nTx_n. \]
Then $(x_n)$ is a Cauchy sequence with rate 
\[\forall \eps>0\,\forall n,m\ge\gamma\left(\left\lceil 
\frac{4b^2}{\pi\cdot (\omega_b(\eps))^2}\right\rceil\right)\ 
\left( \| x_n-x_m\|\le \eps\right), \]  where $\omega_b$ is the modulus of 
uniqueness from Proposition \ref{modulus-of-uniqueness}. 
For $X$ complete and $C$ closed, $(x_n)$ converges with this 
rate to the unique fixed point of $T.$
\end{theorem}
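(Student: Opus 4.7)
The strategy is to reduce the Cauchy property of $(x_n)$ to the modulus of uniqueness $\omega_b$ from Proposition \ref{modulus-of-uniqueness} by producing an explicit rate at which $(x_n)$ becomes an approximate fixed point of $T$. Since $T$ is nonexpansive on the bounded convex set $C$ (with diameter $\le 2b$) and $(\beta_n)\subset[0,1]$ satisfies $\sum\beta_n(1-\beta_n)=\infty$ with rate of divergence $\gamma$, the plan is to invoke the classical Baillon--Bruck asymptotic regularity estimate for Krasnoselskii--Mann iterations in normed spaces, which yields
\[
\|x_n-Tx_n\|\ \le\ \frac{\mathrm{diam}(C)}{\sqrt{\pi\,\sigma_n}}\ \le\ \frac{2b}{\sqrt{\pi\,\sigma_n}},\qquad \sigma_n:=\sum_{k=0}^{n-1}\beta_k(1-\beta_k).
\]
(This is the quantitative form independent of existence of a fixed point; see e.g.\ the analysis in \cite{KohlenbachLeustean(12)} in the more general hyperbolic setting.)

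Next I would plug in the modulus $\omega_b(\eps)$. For
\[
n\ \ge\ \gamma\!\left(\left\lceil \frac{4b^2}{\pi\,(\omega_b(\eps))^2}\right\rceil\right)
\]
the definition of $\gamma$ gives $\sigma_n\ge \lceil 4b^2/(\pi\,(\omega_b(\eps))^2)\rceil\ge 4b^2/(\pi\,(\omega_b(\eps))^2)$, and the Baillon--Bruck estimate then yields $\|x_n-Tx_n\|\le \omega_b(\eps)$. Applying Proposition \ref{modulus-of-uniqueness} to any two such indices $n,m$ gives the desired inequality $\|x_n-x_m\|\le \eps$, which is the claimed Cauchy rate.

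For the final sentence, if $X$ is complete and $C$ closed, the Cauchy rate gives a limit $x^*\in C$; by the same rate estimate $\|x_n-Tx_n\|\to 0$, and nonexpansivity (hence continuity) of $T$ forces $Tx^*=x^*$. Uniqueness of the fixed point is immediate from $\omega_b$ being a modulus of uniqueness. The main obstacle is essentially bookkeeping: one has to justify that the Baillon--Bruck bound applies in the full generality of this section (arbitrary normed space, no a priori assumption of an existing fixed point), with the correct diameter constant so that the threshold $\lceil 4b^2/(\pi\,\omega_b^2)\rceil$ comes out exactly as stated. Everything else is a direct substitution into the quantitative scheme: rate of asymptotic regularity $\circ$ modulus of uniqueness $=$ Cauchy rate, exactly as outlined in the opening paragraphs of Section 4.
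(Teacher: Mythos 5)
Your proposal is correct and follows essentially the same route as the paper: the asymptotic regularity bound $\|x_n-Tx_n\|\le 2b/\sqrt{\pi\sum_{i\le n}\beta_i(1-\beta_i)}$ (which the paper takes from Cominetti--Soto--Vaisman \cite{Cominettietal}, i.e.\ the proved Baillon--Bruck conjecture, valid in arbitrary normed spaces for bounded $C$ without assuming a fixed point exists), composed with the rate of divergence $\gamma$ and the modulus of uniqueness $\omega_b$ of Proposition \ref{modulus-of-uniqueness}, and then completeness plus continuity of $T$ for the limit statement. The only caveat is the indexing of the sum (the paper's $\sum_{i=0}^{n}$ versus your $\sum_{k=0}^{n-1}$), which is exactly the bookkeeping needed to make the threshold $\gamma(\lceil 4b^2/(\pi\omega_b(\eps)^2)\rceil)$ come out as stated.
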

\begin{proof}
By \cite{Cominettietal} we have for all $n$
\[ (1) \ \| x_n-Tx_n\| \le \frac{2b}{\sqrt{\pi\cdot\sum^n_{i=0} 
\beta_i(1-\beta_i)}}. \]
Now let $n\ge\gamma\left(\left\lceil 
\frac{4b^2}{\pi\cdot (\omega_b(\eps))^2}\right\rceil\right),$ then 
\[ (2) \ \sum^{n}_{i=0} 
\beta_i(1-\beta_i)\ge \frac{4b^2}{\pi\cdot\omega_b(\eps)^2} \] and so by $(1)$
\[ \| x_n-Tx_n\|\le \omega_b(\eps). \]
By Proposition \ref{modulus-of-uniqueness}, the claim on the Cauchy 
modulus follows. If $X$ is complete and $C$ is closed, then -  by (1) - 
the limit $p$ of $(x_n)$ is a fixed point of $T$ (which by 
Proposition \ref{modulus-of-uniqueness} is unique).
\end{proof}

\begin{theorem} Let $C,X,\eta,b,T,\Omega$ be as before.
Assume that $\alpha_n\in (0,1)$ with 
\[  \lim_{n\to\infty} \alpha_n=0, \ \sum^{\infty}_{n=1} |\alpha_{n+1}-\alpha _n| \ \mbox{converges}, \ 
\prod^{\infty}_{n=1} (1-\alpha_n)=0.\] 
Let $\alpha$ be a rate of convergence of $(\alpha_n)$, $\beta$ be a 
Cauchy modulus of $s_n:=\sum_{i=1}^n|\alpha_{i+1}-\alpha_i|$ and $\theta$ 
be a rate of convergence of $\prod_{n=1}^\infty (1-\alpha_{n+1})=0$ 
towards $0.$ Define the Halpern iteration of $T$ with starting point $x\in C$ 
and anchor $u\in C$ by 
\[ x_0:=x\, \ \ x_{n+1}:=(1-\alpha_{n})T(x_n)+\alpha_{n}u. \]
Then $(x_n)$ is a Cauchy sequence with rate
\[ \forall \eps>0\,\forall n,m\ge\Phi(\omega_b(\eps)) \
\left( \| x_n-x_m\|\le \eps\right), \ \mbox{where}\]
\[ \Phi(\eps,b,\theta,\alpha,\beta,D) =  
\max\left\{\theta\left(\frac{D\eps}{4b}\right)+1,\alpha\left(
\frac{\eps}{4b}\right)\right\}+1, 
\]
and \[ 0<D\le  \prod_{n=1}^{\beta(\eps/8b)}(1-\alpha_{n+1}).\]
For the choice $\alpha_{n}:=\frac{1}{n+1},$ we may take 
\[ \Phi(\eps,b):=\left\lceil \frac{4b}{\eps}+\frac{32b^2}{\eps^2}\right\rceil 
.\]
\end{theorem}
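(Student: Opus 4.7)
The plan is to reduce the Cauchy claim to a quantitative approximate-fixed-point rate, and then invoke Proposition~\ref{modulus-of-uniqueness}. More precisely, I will construct a function $\Phi$ such that $\|x_n-Tx_n\|\le \eps$ whenever $n\ge\Phi(\eps)$; substituting $\omega_b(\eps)$ for $\eps$ and applying the modulus of uniqueness then gives $\|x_n-x_m\|\le\eps$ for all $n,m\ge\Phi(\omega_b(\eps))$, which is the desired Cauchy rate. For completeness (and since $C$ is closed, $X$ complete), $(x_n)$ then converges to the unique fixed point of $T$ at this same rate.

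To produce $\Phi$, I start from the identity $x_{n+1}-T(x_n)=\alpha_n(u-T(x_n))$, which together with $\|u-T(x_n)\|\le 2b$ yields
\[ \|x_n-Tx_n\|\le \|x_{n+1}-x_n\|+2b\alpha_n, \]
so it is enough to control each summand by $\eps/2$. The term $2b\alpha_n$ is handled by the rate $\alpha$, taking $n\ge \alpha(\eps/(4b))$. For the first summand, rewriting
\[ x_{n+1}-x_n=(1-\alpha_n)(T(x_n)-T(x_{n-1}))+(\alpha_{n-1}-\alpha_n)(T(x_{n-1})-u) \]
and using the nonexpansivity of $T$ gives the standard recurrence
\[ a_n:=\|x_{n+1}-x_n\|\le (1-\alpha_n)a_{n-1}+2b|\alpha_n-\alpha_{n-1}|, \]
which iterated from an index $m$ yields $a_n\le 2b\prod_{k=m+1}^n(1-\alpha_k)+2b\sum_{k=m+1}^n|\alpha_k-\alpha_{k-1}|$. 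I then pick $m:=\beta(\eps/(8b))$ so that the tail sum contributes at most $\eps/4$ (using the Cauchy modulus of $(s_n)$), and for the product factor I exploit
\[ \prod_{k=m+1}^n(1-\alpha_k)=\frac{\prod_{j=1}^{n-1}(1-\alpha_{j+1})}{\prod_{j=1}^{m}(1-\alpha_{j+1})}\le \frac{1}{D}\prod_{j=1}^{n-1}(1-\alpha_{j+1}), \]
so taking $n-1\ge\theta(D\eps/(8b))$ makes this term also at most $\eps/4$. Assembling these choices gives $\Phi$ of the stated shape (up to the factor-of-two slack in the final split of $\eps/2$ between product and sum parts).

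The only subtlety is the algebraic bookkeeping: the rate $\theta$ is supplied for the full product $\prod_{n=1}^\infty(1-\alpha_{n+1})=0$, and the partial product over $[m+1,n]$ has to be extracted by dividing by the positive quantity $\prod_{j=1}^m(1-\alpha_{j+1})$; the lower bound $D$ is exactly the slack that feeds back into $\theta$ as a scaling factor. Nothing deeper is required, since the modulus $\omega_b$ from Proposition~\ref{modulus-of-uniqueness} already packages all of the uniform-convexity plus uniform-accretivity work done in Lemmas~\ref{lemma1}--\ref{lemma3}. For the specialisation $\alpha_n=1/(n+1)$, the closed forms are immediate: $\alpha(\eps)=\lceil 1/\eps\rceil$, $\beta(\eps)=\lceil 1/\eps\rceil$ (from $|\alpha_{n+1}-\alpha_n|=1/((n+1)(n+2))$ telescoping), and $\theta(\eps)=\lceil 2/\eps\rceil$ (from $\prod_{k=1}^n(1-\alpha_{k+1})=2/(n+2)$); choosing $D$ of order $\eps/b$ and substituting yields the claimed $\Phi(\eps,b)=\lceil 4b/\eps+32b^2/\eps^2\rceil$.
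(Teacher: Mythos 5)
Your overall route is the paper's route: reduce to a rate of asymptotic regularity for the Halpern iteration and compose it with the modulus of uniqueness of Proposition~\ref{modulus-of-uniqueness} (the composition $\Phi(\omega_b(\eps))$ is exactly the generic observation made at the beginning of Section 4). The only difference is that the paper obtains the asymptotic-regularity rate $\Phi$ by quoting \cite[Proposition 6.2, Corollary 6.3]{KohlenbachLeustean(12)}, whereas you rederive it via the standard recurrence $a_n\le(1-\alpha_n)a_{n-1}+2b|\alpha_n-\alpha_{n-1}|$ for $a_n=\|x_{n+1}-x_n\|$ together with $\|x_n-Tx_n\|\le a_n+2b\alpha_n$. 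That rederivation is sound in outline (the off-by-one in the choice $m=\beta(\cdot)$ versus $m=\beta(\cdot)+1$, and the ``$=$'' in your product identity which is really ``$\le$'', are harmless; also note that $n>m$ is automatic since $\prod_{j=1}^{\theta(D\eps/4b)}(1-\alpha_{j+1})\le D\eps/4b<D$ forces $\theta(\cdot)>\beta(\cdot)$).

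The genuine issue is quantitative: your argument does not yield the stated $\Phi$. With your split ($\eps/2$ for the $2b\alpha_n$ term, $\eps/4+\eps/4$ for the sum and product parts of $a_n$) you arrive at $\theta\bigl(\frac{D\eps}{8b}\bigr)$, not $\theta\bigl(\frac{D\eps}{4b}\bigr)$; indeed, within your decomposition the stated triple $\theta(\frac{D\eps}{4b})$, $\alpha(\frac{\eps}{4b})$, $\beta(\frac{\eps}{8b})$ corresponds to error budgets $\frac{\eps}{2}+\frac{\eps}{2}+\frac{\eps}{4}=\frac{5\eps}{4}$, so no re-allocation of $\eps$ among the three terms recovers it -- the stated constants come from the sharper analysis in the cited source, not from this bookkeeping. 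Correspondingly, in the specialization $\alpha_n=\frac{1}{n+1}$ your own moduli give $\theta\bigl(\frac{D\eps}{8b}\bigr)\approx\frac{64b^2}{\eps^2}$, so your closing claim that ``substituting yields $\lceil\frac{4b}{\eps}+\frac{32b^2}{\eps^2}\rceil$'' does not follow from your computation. Since any larger threshold still witnesses the Cauchy property, what you have proved is a correct but slightly weaker statement than the theorem verbatim; to get the theorem with its specific $\Phi$ you should either cite \cite{KohlenbachLeustean(12)} as the paper does or tighten the asymptotic-regularity analysis beyond the naive splitting.
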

\begin{proof} The theorem follows from  
Proposition \ref{modulus-of-uniqueness} combined with 
\cite[Proposition 6.2,Corollary 6.3]{KohlenbachLeustean(12)}.
\end{proof}

%

{\bf Acknowledgments:} Both authors have been 
supported by the German Science Foundation (DFG 
Project KO 1737/6-1).

The second author was also supported by FCT - Funda{\c c}{\~ a}o para a Ci{\^ e}ncia e a Tecnologia, under
the projects UIDB/04561/2020 and UIDP/04561/2020, and the research center CMAFcIO - Centro
de Matem{\' a}tica, Aplica{\c c}{\~ o}es Fundamentais e Investiga{\c c}{\~ a}o Operacional.

\end{document}